\newcommand{\argmin}{\mathop{\mathrm{argmin}}}
\newcommand{\Eqref}[1]{Eq. \eqref{#1}}
\newcommand{\boldone}{{\boldsymbol{1}}}
\newcommand{\calA}{{\mathcal A}}
\newcommand{\calB}{{\mathcal B}}
\newcommand{\calF}{{\mathcal F}}
\newcommand{\calG}{{\mathcal G}}
\newcommand{\calQ}{{\mathcal Q}}
\newcommand{\calT}{{\mathcal T}}
\newcommand{\calH}{{\mathcal H}}
\newcommand{\calX}{{\mathcal X}}
\newcommand{\fhat}{\widehat{f}}
\newcommand{\fbar}{\bar{f}}
\newcommand{\LPi}{L_2} %(P_{\calX})}
\newcommand{\Real}{\mathbb{R}}
\newcommand{\EE}{\mathrm{E}}
\newcommand{\dd}{\mathrm{d}}
\newcommand{\fstar}{f^{\ast}}
\newcommand{\ftrue}{f^{\mathrm{o}}}
\newcommand{\htrue}{h^{\mathrm{o}}}
\newcommand{\btrue}{b^{\mathrm{o}}}
\newcommand{\Ftrue}{F^{\mathrm{o}}}
\def\I<#1>{\left\langle #1 \right\rangle}
\def\i<#1>{\left\langle #1 \right\rangle}
\newtheorem{Theorem}{Theorem}
\newtheorem{Lemma}{Lemma}
\newtheorem{Assumption}{Assumption}
\newtheorem{Proposition}{Proposition}
\newtheorem{Corollary}{Corollary}
\newcommand{\Ltwo}{L_2}
\renewcommand{\LPi}{L_2(P(X))}
\newcommand{\dx}{d_{\mathrm{x}}}
\newcommand{\kell}{\mathsf{k}_\ell}
\newcommand{\Tr}{\mathrm{Tr}}
\newcommand{\F}{\mathrm{F}}
\newcommand{\mell}{m_\ell}
\newcommand{\czero}{c_0}
\newcommand{\cone}{c_1}
\newcommand{\conedelta}{\hat{c}_{\delta}}
\newcommand{\Well}[1]{W^{(#1)}}
\newcommand{\bell}[1]{b^{(#1)}}
\newcommand{\Wstarell}[1]{W^{*(#1)}}
\newcommand{\bstarell}[1]{b^{*(#1)}}
\newcommand{\htrueell}[1]{h^{\circ}_{#1}}
\newcommand{\cdelta}{c_\delta}
\newcommand{\Rb}{R_b}
\newcommand{\epsilonn}{\epsilon_n}%{\bar{\epsilon}_n}
\newcommand{\logone}{\log_+}
\newcommand{\Rhatinf}{\hat{R}_{\infty}}
\newcommand{\rtil}{\tilde{r}}
\newcommand{\Rbar}{\bar{R}}
\newcommand{\Rbarb}{\bar{R}_b}
\newcommand{\Unif}{\mathrm{U}}
\newcommand{\deltanone}{\hat{\delta}_{1,n}}
\newcommand{\deltantwo}{\hat{\delta}_{2,n}}
\newcommand{\tilepsilonn}{\tilde{\epsilon}_n}
\newcommand{\fstarsub}{\tilde{f}^*}
\begin{document}

\title{Fast learning rate of deep learning via a kernel perspective}

\author{\name Taiji Suzuki \email taiji@mist.i.u-tokyo.ac.jp \\
       \addr Department of Mathematical Informatics \\
       The University of Tokyo  \\
       7-3-1 Hongo, Bunkyo-ku, Tokyo 113-8656, Japan, \\
\vspace{-0.3cm} \\
       PRESTO, Japan Science and Technology Agency, \\
\vspace{-0.3cm} \\
	Center for Advanced Integrated Intelligence Research, RIKEN}
%       \AND
%       \name Michael I.\ Jordan \email jordan@cs.berkeley.edu \\
%       \addr Division of Computer Science and Department of Statistics\\
%       University of California\\
%       Berkeley, CA 94720-1776, USA}

%\editor{??}

\maketitle

%!TEX root = arXiv2017.tex

%\twocolumn[
\title{Fast generalization error bound of deep learning: \\
a kernel perspective}

\maketitle 

\begin{abstract}
We develop a new theoretical framework 
to analyze the generalization error of deep learning,
and derive a new fast learning rate for two representative algorithms: 
{\it empirical risk minimization} and {\it Bayesian deep learning}.
The series of theoretical analyses of deep learning 
has revealed its high expressive power and universal approximation capability.
Although these analyses are highly nonparametric, 
existing generalization error analyses have been developed mainly in a fixed dimensional parametric model.
To compensate this gap, we develop an infinite dimensional model 
that is based on an integral form as performed in the analysis of the universal approximation capability.
This allows us to define a reproducing kernel Hilbert space corresponding to each layer.
Our point of view is to deal with the ordinary finite dimensional deep neural network 
as a finite approximation of the infinite dimensional one.
The approximation error is evaluated by the {\it degree of freedom} of the reproducing kernel Hilbert space in each layer.
To estimate a good finite dimensional model, 
we consider both of empirical risk minimization and Bayesian deep learning.
We derive its generalization error bound
and it is shown that there appears bias-variance trade-off 
in terms of the number of parameters of the finite dimensional approximation.
We show that the optimal width of the internal layers can be determined through the degree of freedom 
and the convergence rate can be faster than $O(1/\sqrt{n})$ rate which has been shown in the existing studies.

%We develop a new theoretical framework 
%to analyze the generalization error of deep neural network,
%and based on the that we derive a generalization error bound 
%for two representative algorithms; empirical risk minimization and Bayesian deep learning.
%The series of the theoretical analysis of deep neural network 
%has revealed its high expressive power and universal approximation capability.
%Although these analyses are highly nonparametric, 
%existing generalization error analyses have been developed mainly in a fixed dimensional parametric model.
%We develop an infinite dimensional model 
%that is based on an integral form as performed in the analysis of the universal approximation capability.
%This allows us to define a reproducing kernel Hilbert space corresponding to each layer.
%Our point of view is to deal with the ordinary finite dimensional deep neural network 
%as a finite approximation of the infinite dimensional one.
%The approximation error is evaluated by the degree of freedom of the reproducing kernel Hilbert space in each layer.
%To estimate a good finite dimensional model, 
%we consider both empirical risk minimization and Bayesian deep learning.
%We derive its generalization error bound
%and it is shown that there appears bias-variance trade-off 
%in terms of the number of parameters of the finite dimensional approximation.
%We show that the best width of the internal layers can be determined through the degree of freedom 
%and derive the optimal convergence rate that is faster than the $O(1/\sqrt{n})$ rate
%that has been shown in the existing studies.

\end{abstract}

\begin{keywords}
  Deep Learning, Fast Learning Rate, Kernel Method, 
  Degree of Freedom, Generalization Error Bounds,
  Empirical Risk Minimizer, Bayesian Deep Learning
\end{keywords}

\section{Introduction}

Deep learning has been showing great success 
in several applications such as 
computer vision, natural language processing,
and many other area related to pattern recognition.
Several high-performance methods have been developed and 
it has been revealed that deep learning possesses great potential.
Despite the development of practical methodologies,
its theoretical understanding is not satisfactory.
Wide rage of researchers including theoreticians and practitioners
are expecting deeper understanding of deep learning.

Among theories of deep learning, 
a well developed topic is its expressive power.
%the effectiveness of deep learning 
%s theoretically supported by about its expressive power.
It has been theoretically shown that 
deep neural network has exponentially large expressive power against the number of layers.
For example, \citet{NIPS2014_5422}
showed that the number of polyhedral regions created by deep neural network %with ReLU
can exponentially grow as the number of layers increases.
%\item \textcolor{myblue}{\bf Polynomial expansions, tensor analysis}   \\ 
%{\small 
%}
%\\
%Number of monomials  (Sum product)
%\item \textcolor{myblue}{\bf Algebraic topology}  \\ 
%{\small 
\citet{bianchini2014complexity} showed that the Betti numbers of the level set of a function created by deep neural network
grows up exponentially against the number of layers.
%}
% \\
%  (Pfaffian)  \\
%\item \textcolor{myblue}{\bf Riemannian geometry +
%Dynamic mean field theory}  \\ 
%{\small 
Other researches 
also concluded similar facts using different notions such as tensor rank and extrinsic curvature
\citep{cohen2016expressive,ICML:Cohen+Shashua:2016,NIPS:Poole+etal:2016}.
%It has exponentially large power.

Another important issue in neural network theories is its universal approximation capability.
It is well known that 3-layer neural networks have the ability,
and thus the deep neural network also does \citep{cybenko1989approximation,hornik1991approximation,sonoda2015neural}.
When we discuss the universal approximation capability, 
the target function that is approximated is arbitrary and 
the theory is highly nonparametric in its nature.

%Once one knows characters of the expressive power of deep learning, 
Once we knew the expressive power and universal approximation capability of deep neural network,
%one is also interested in its generalization error.
%Since its strong power of expression, it could cause overfitting.
%the generalization error of deep learning has also been studied by several authors.
the next theoretical question naturally arises in its generalization error.
The generalization ability is typically analyzed by evaluating the {\it Rademacher complexity}. % which is a quite powerful tool.
\citet{bartlett1998sample} studied 3-layer neural networks and characterized its Rademacher complexity using the norm of weights. 
\citet{koltchinskii2002empirical} studied deep neural network and derived its Rademacher complexity under norm constraints.
More recently, \citet{COLT:Neyshabur+Tomioka+Srebro:2015}
analyzed the Rademacher complexity based on more generalized norm, and 
\citet{sun2015large} derived a generalization error bound  
with a large margine assumption.
%via a Rademacher complexity analysis.
As a whole, the studies listed above derived $O(1/\sqrt{n})$ convergence of the generalization error where $n$ is the sample size.
%They are O(1/n) and thus it is expected to improve the bound.
One concern in this line of convergence analyses is that 
the convergence of the generalization error is only $O(1/\sqrt{n})$ where $n$ is the sample size.  
Although this is minimax optimal, it is expected that we could show faster convergence rate 
with some additional assumptions such as strong convexity of the loss function.
Actually, in a regular parametric model, we have $O(1/n)$ convergence of the generalization error \citep{hartigan1998maximum}.
%
%
%One drawback of this approach is that 
%the convergence is only $O(1/\sqrt{n})$. 
%However, in a regular parametric model, we have $O(1/n)$ convergence of the generalization error.
%This gap is induced from the non-strong convexity of the loss function.
%It is expected to develop a theory that bridges this gap for a strongly convex loss function like the squared loss.
Moreover, 
the generalization error bound has been mainly given in finite dimensional models.
As we have observed, the deep neural network 
possesses exponential expressive power and universal approximation capability
which are highly nonparametric characterizations.
%Hence, deep neural network should be dealt with a nonparametric model.
%However, the existing generalization error analysis is mainly developed in a fixed dimensional finite dimensional model.
This means that the theories are developed separately in the two regimes; finite dimensional parametric model and infinite dimensional nonparametric model.
%There have been no theoretical analysis that connects these two regimes.
Therefore, theories that connect these two regimes are expected to comprehensively understand statistical performance of deep learning.

In this paper,
we consider both of empirical risk minimization and Bayesian deep learning and 
analyze the generalization error using the terminology of kernel methods.
Consequently, (i) we derive a faster learning rate than $O(1/\sqrt{n})$
and (ii) we connect the finite dimensional regime and the infinite dimensional regime based on the theories of kernel methods.
The empirical risk minimization is a typical approach to learn the deep neural network model.
It is usually performed by applying stochastic gradient descent with the back-propagation technique \citep{widrow1960adaptive,amari1967theory,rumelhart1988learning}.
To avoid over-fitting, such techniques as regularization and dropout have been employed \citep{srivastava2014dropout}.
Although the practical techniques for the empirical risk minimization have been extensively studied, 
there is still much room for improvement in its generalization error analysis. 
%has not been studied so much.
Bayesian deep learning has been recently gathering more attentions mainly because it can deal with the estimation uncertainty in a natural way.
Examples of Bayesian deep learning researches include
probabilistic backpropagation \citep{hernandez2015probabilistic},
Bayesian dark knowledge \citep{balan2015bayesian},
weight uncertainty by Bayesian backpropagation \citep{blundell2015weight}, 
dropout as Bayesian approximation \citep{Gal+Ghahramani2016}.
%Moreover, 
%Historically it backs to 
%Gibbs distribution on the ensemble of networks \cite{tishby1989consistent}
%and Bayesian backpropagation \cite{buntine1991bayesian}.
%In addition to its practical importance, our motivation to consider Bayesian deep neural network lies in
%a fact that %the simplicity of the analysis to derive fast convergence rate of its generalization error.
%Actually, 
%we can derive fast convergence rate without strong convexity.
To analyze a sharper generalization error bound, we utilize the so-called local Rademacher complexity technique for the empirical risk minimization method 
\citep{IEEEIT:Mendelson:2002,LocalRademacher,Koltchinskii,gine2006concentration},
and, as for the Bayesian method, we employ the theoretical techniques developed to analyze nonparametric Bayes methods
\citep{AS:Ghosal+Ghosh+Vaart:2000,AS:Vaart&Zanten:2008,JMLR:Vaart&Zanten:2011}. 
These analyses are quite advantageous to the typical Rademacher complexity analysis because 
we can obtain convergence rate between $O(1/n)$ and $O(1/\sqrt{n})$ which is faster than that of the standard Rademacher complexity analysis $O(1/\sqrt{n})$.
%As a result, we obtain 
As for the second contribution, we first introduce an integral form of deep neural network
as performed in the research of the universal approximation capability of 3-layer neural networks \citep{sonoda2015neural}.
This allows us to have a nonparametric model of deep neural network
 as a natural extension of usual finite dimensional models.
Afterward, we define a reproducing kernel Hilbert space (RKHS) corresponding to each layer
like in \cite{JMLR:v18:14-546,bach2015equivalence}.
By doing so, we can borrow the terminology developed in the kernel method into the analysis of deep learning.
In particular, we define the degree of freedom of the RKHS as a measure of complexity of the RKHS \citep{FCM:Caponetto+Vito:2007,bach2015equivalence}, and
based on that, we evaluate how large a finite dimensional model should be to approximate the original infinite dimensional model with a specified precision. 
%degree of freedom
%an approximation error by a finite dimensional model using the notion of the degree of freedom.
These theoretical developments reveal that %Based on these theoretical developments, there appears 
there appears bias-variance trade-off.
That is, there appears trade-off between 
the size of the finite dimensional model approximating the nonparametric model 
and the variance of the estimator.
We will show that, by balancing the trade-off, 
a fast convergence rate is derived.
In particularly, the optimal learning rate of the kernel method is reproduced 
from our deep learning analysis due to the fact that 
the kernel method can be seen as a 3-layer neural network with an infinite dimensional internal layer.
A remarkable property of the derived generalization error bound is that 
the error is characterized by the complexities of the RKHSs defined by the degree of freedom.
Moreover, the notion of the degree of freedom gives a practical implication about determination of the width of the internal layers.

The obtained generalization error bound is summarized in Table \ref{tab:result_summary}\footnote{$a \vee b$ indicates $\max\{a,b\}$.}.

\begin{table}
\begin{center}
\caption{Summary of derived bounds for the generalization error $\|\fhat - \ftrue \|^2_{\LPi}$
where $n$ is the sample size, 
$R$ is the norm of the weight in the internal layers, 
$\Rhatinf$ is an $L_\infty$-norm bound of the functions in the model,
$\sigma$ is the observation noise, $\dx$ is the dimension of the input,
$\mell$ is the width of the $\ell$-th internal layer
and $N_\ell(\lambda_\ell)$ for ($\lambda_\ell >0$) is the degree of freedom (\Eqref{eq:DefDOF}).}
\label{tab:result_summary}
\begin{tabular}{|c|c|}
\hline
 & Error bound  \\
\hline 
General setting & 
$L \sum_{\ell=2}^L R^{L-\ell +1} \lambda_\ell + \frac{\sigma^2 + \Rhatinf^2}{n}\sum_{\ell=1}^L m_{\ell} m_{\ell+1} \log(n)$ 
\\
& under an assumption that $\mell \gtrsim N_\ell(\lambda_\ell) \log(N_\ell(\lambda_\ell))$.
 \\
\hline 
Finite dimensional model & 
$\frac{\sigma^2 + \Rhatinf^2}{n}\sum_{\ell=1}^L m^*_{\ell} m^*_{\ell+1} \log(n)$ \\
 & where $m_\ell^*$ is the true width of the $\ell$-th internal layer.
 \\
\hline 
Polynomial decay eigenvalue& 
$
L \sum_{\ell=2}^L (R\vee 1)^{L - \ell + 1} n^{-\frac{1}{1+2s_\ell}} \log(n) 
+ \frac{\dx^2}{n}\log(n)
$ \\
& where $s_\ell$ is the decay rate of the eigenvalue of the kernel  \\ 
& function on the $\ell$-th layer.
 \\
\hline 
\end{tabular}
\end{center}
\end{table}

\section{Integral representation of deep neural network}

Here we give our problem settings and the model that we consider in this paper.
Suppose that 
$n$ input-output observations $D_n = (x_i,y_i)_{i=1}^n \subset \Real^{\dx} \times \Real$
are independently identically 
generated 
from a regression model 
$$
y_i = \ftrue(x_i) + \xi_i~~(i=1,\dots,n)
$$
where $(\xi_i)_{i=1}^n$ is an i.i.d. sequence of Gaussian noises $N(0,\sigma^2)$ with mean 0 and variance $\sigma^2$,
and $(x_i)_{i=1}^n$ is generated independently identically from a distribution $P(X)$ with a compact support in $\Real^{\dx}$.  %$\calX \subset \Real^{\dx}$.
The purpose of the deep learning problem we consider in this paper is 
to estimate $\ftrue$ from the $n$ observations $D_n$.

To analyze the generalization ability of deep learning,
we specify a function class in which the true function $\ftrue$ is included,
and, by doing so, we characterize the ``complexity'' of the true function in a correct way.

In order to give a better intuition, we first start from the simplest model, the 3-layer neural network.
Let $\eta$ be a nonlinear activation function such as ReLU \citep{nair2010rectified,glorot2011deep};
$\eta(x) = (\max\{x_i,0\})_{i=1}^d$ for a $d$-dimensional vector $x \in \Real^d$.
%Let $\eta$ be the ReLU activation $\eta(x) = \max\{x,0\}$ for $x \in \Real$.
%With a slight abuse of notations, we use the same notation $\eta$ for a vector valued activation 
%as $\eta(x) = (\max\{x_i,0\})_{i=1}^d$ for a $d$-dimensional vector $x \in \Real^d$ \cite{}.
The 3-layer neural network model is represented by 
$$
f(x) = \Well{2} \eta ( \Well{1} x + \bell{1}) + \bell{2}
$$
where we denote by $m_2$ the number of nodes in the internal layer, and 
$\Well{2} \in \Real^{1 \times m_2}$, $\Well{1} \in \Real^{m_2 \times \dx}$, $\bell{1} \in \Real^{m_2}$ and 
$\bell{2} \in \Real$.
It is known that this model is {\it universal approximator} and it is important to consider its {\it integral form}
\begin{align}
f(x) = \int h(w,b) \eta (w^\top x + b) \dd w \dd b + \bell{2}.
\label{eq:integralformFirst}
\end{align}
where $(w,b) \in \Real^{\dx} \times \Real$ is a hidden parameter, $h:\Real^{\dx} \times \Real \to \Real$ is a function version of 
the weight matrix $\Well{2}$, and $\bell{2} \in \Real$  is the bias term.
This integral form appears in many places to analyze the capacity of the neural network.
In particular, through the ridgelet analysis, it is shown that there exists the integral form corresponding to any $f \in L_1(\Real^{\dx})$
which has an integrable Fourier transform for an appropriately chosen activation function $\eta$ such as ReLU \citep{sonoda2015neural}.

Motivated by the integral form of the 3-layer neural network,
we consider a more general representation for deeper neural network.
To do so, we define a feature space on the $\ell$-th layer.
The feature space is a 
%Here we assume that 
%there exists 
a probability space 
$
(\calT_\ell, \calB_\ell,\calQ_\ell)
$
where $\calT_\ell$ is a Polish space, $\calB_\ell$ is its Borel algebra, and $\calQ_\ell$ is a probability measure on $(\calT_\ell,\calB_\ell)$.
This is introduced to represent a general (possibly) continuous set of features as well as 
a discrete set of features.
For example, if the $\ell$-th internal layer is endowed with a $d_\ell$-dimensional finite feature space,
then $\calT_{\ell} = \{1,\dots,d_\ell\}$.
On the other hand,
the integral form \eqref{eq:integralformFirst} corresponds to a continuous feature space $\calT_2 = \{(w,b) \in \Real^{\dx} \times \Real\}$
in the second layer.
Now the input $x$ is a $\dx$-dimensional real vector, and thus we may set $\calT_1 = \{1,\dots,\dx\}$.
Since the output is one dimensional, the output layer is just a singleton $\calT_{L+1} = \{1\}$.
Based on these feature spaces, our integral form of the deep neural network 
is constructed by stacking 
the map on the $\ell$-th layer 
$\ftrue_\ell: \Ltwo (Q_\ell) \to\Ltwo(Q_{\ell + 1}) $ %on its  %and it is given as 
given as 
\begin{subequations}
\begin{align}
& \ftrue_\ell [g] (\tau) = \int_{\calT_\ell} \htrue_\ell(\tau, w) \eta ( g(w) ) \dd Q_\ell(w) + \btrue_\ell (\tau),
\end{align}
where 
$\htrue_\ell(\tau,w)$ corresponds to the weight of the feature $w$ 
for the output $\tau$
and 
%$h: \calT_{\ell+1} \times \calT_\ell \to \Real$ is square integrable on the product space $\calT_{\ell+1}\times \calT_\ell$:
$\htrue_\ell \in \Ltwo(Q_{\ell+1} \times Q_{\ell})$ and $\htrue_\ell(\tau,\cdot) \in  \Ltwo(Q_{\ell+1})$ for all $\tau \in \calT_{\ell +1}$
\footnote{
Note that, for $g \in \Ltwo(Q_{\ell})$, 
$f_\ell[g]$ is also square integrable with respect to $\Ltwo(Q_{\ell +1})$
if $\eta$ is Lipschitz continuous 
because $h \in \Ltwo(Q_{\ell+1} \times Q_{\ell})$.}.
Specifically, 
the first and the last layers are represented as 
\begin{align}
& \ftrue_1 [x] (\tau) = \sum_{j=1}^{\dx} \htrue_1(\tau,j) x_j  Q_1(j) + \btrue_1 (\tau), \\
& \ftrue_L [g] (1) = \int_{\calT_L} \htrue_L (w) \eta ( g(w) ) \dd Q_L(w) + \btrue_L,
\end{align} \label{eq:flIntegralForm}
\end{subequations} 
where we wrote $\htrue_L(w)$ to indicate $\htrue_L(1,w)$ for simplicity because $\calT_{L+1} = \{1\}$.
%\noindent 
Then the true function $\ftrue$ is given as 
\begin{align}
\ftrue(x) = \ftrue_L \circ \ftrue_{L-1} \circ \dots \circ \ftrue_{1} (x).
\label{eq:flstacked}
\end{align}
Since, the shallow 3-layer neural network is a universal approximator,
and so is our generalized deep neural network model \eqref{eq:flstacked}.
It is known that deep neural network tends to give more efficient representation 
of a function than the shallow network.
Actually, \citet{eldan2016power} gave an example of a function that
the 3-layer neural network cannot approximate under a precision unless its with is exponential in the input dimension
but the 4-layer neural network can approximate with polynomial order widths
(see \citet{safran2016depth} for other examples).
In other words, each layer of a deep neural network can be much ``simpler'' 
than one of a shallow network (more rigorous definition of complexity of each layer will be given in the next section).
Therefore, it is quite important to consider the integral representation of a deep neural network 
rather than a 3-layer network.

%,safran2016depth}

The integral representation is natural also from the practical point of view.
Indeed, it is well known that the deep neural network learns 
a simple pattern in the early layers and it gradually extracts more complicated features 
as the layer is going up.
The trained feature is usually continuous one.
For example, in computer vision tasks, 
the second layer typically extracts gradients toward several degree angles \citep{krizhevsky2012imagenet}.
The angle is a continuous variable and thus the feature space should be continuous to cover all angles. 
On the other hand, the real network discretize the feature space because of limitation of computational resources.
Our theory introduced in the next section offers a measure to evaluate this discretization error.
%We need to evaluate this discretization error. 
%In the next section, we give a tool to evaluate that.

\section{Finite approximation of the integral form}
\label{sec:FiniteApproximation}

The integral form is a convenient way to describe the true function.
However, it is not useful to estimate the function.
When we estimate that, we need to discretize the integrals by finite sums due to limitation of computational resources
as we do in practice.
In other word, we consider the usual finite sum deep learning model as an approximation of 
the integral form.
However, the discrete approximation induces approximation error.
Here we give an upper bound of the approximation error.
Naturally, there arises the notion of bias and variance trade-off,
that is, as the complexity of the finite model increases the ``bias'' (approximation error) decreases
but the ``variance'' for finding the best parameter in the model increases.
Afterwards, we will bound the variance for estimating the finite approximation in Section \ref{sec:GenErrorBound}.
Combining these two notions, %that with the approximation error bound shown in this section,
it is possible to quantify the bias-variance trade-off and find the best strategy to minimize the entire generalization error.

The approximation error analysis of the deep neural network can be 
well executed by utilizing notions of the kernel method.
Here we construct RKHS for each layer %based on the kernel construction 
in a way analogous to \cite{bach2015equivalence,JMLR:v18:14-546} 
who studied shallow learning and the kernel quadrature rule.
Let the output of the $\ell$-th layer be $\Ftrue_\ell(x,\tau) := (\ftrue_{\ell} \circ \dots \circ \ftrue_{1} (x))(\tau).$
We define a {\it reproducing kernel Hilbert space} (RKHS) corresponding to the $\ell$-th layer ($\ell \geq 2$) by introducing its 
associated kernel function $\kell: \Real^{\dx} \times \Real^{\dx} \to \Real$.
We define the positive definite kernel $\kell$ as
$$
\kell(x,x') := \int_{\calT_\ell}  \eta (\Ftrue_{\ell-1 }(x,\tau))  \eta (\Ftrue_{\ell-1 }(x',\tau)) \dd Q_\ell (\tau).
$$
It is easy to check that $\kell$ is actually symmetric and positive definite.
It is known that there exists a unique RKHS $\calH_\ell$ corresponding the kernel $\kell$ \citep{AMS:Aronszajn:1950}.
Close investigation of the RKHS for several examples for shallow network has been given in \citep{JMLR:v18:14-546}.

Under this setting, all arguments at the $\ell$-th layer can be carried out through the theories of kernel methods.
Importantly, for $g \in \calH_\ell$, there exists $h \in \Ltwo(Q_\ell)$ such that 
$$
g(x) = \int_{T_\ell} h(\tau) \eta (\Ftrue_{\ell-1}(x,\tau)) \dd Q_\ell(\tau).
$$
Moreover, the norms of $g$ and $h$ are connected as 
\begin{align}
\|g\|_{\calH_\ell} = \|h\|_{\Ltwo(Q_\ell)},
\label{eq:ghnormeq}
\end{align}
\citep{bach2015equivalence,JMLR:v18:14-546}.
%This should be explained in more details!!!!!!!!!!!!!!!!!!!!!
Therefore, the function 
$$
%\widetilde{F}_\ell(\cdot,\tau): 
x \mapsto 
\int_{\calT_\ell} \htrue_\ell(\tau, w) \eta ( \Ftrue_{\ell-1}(x,w) ) \dd Q_\ell(w),
$$
representing the magnitude of a feature $\tau \in \calT_{\ell + 1}$
for the input $x$
is included in the RKHS %, $\Ftrue_\ell(\cdot,\tau) \in \calH_\ell$ 
and its RKHS norm is equivalent to that of the internal layer weight  $\|\htrue(\tau,\cdot)\|_{\Ltwo(Q_\ell)}$ 
because of \Eqref{eq:ghnormeq}.
%, $\|\Ftrue_\ell(\cdot,\tau)\|_{\calH_\ell} = \|\htrue(\tau,\cdot)\|_{\Ltwo(Q_\ell)}$,
%because of the assumption that $\htrue(\tau,\cdot) \in \Ltwo(Q_{\ell + 1})$ for all $\tau \in \calT_{\ell +1}$.

To derive the approximation error, we need to evaluate the ``complexity'' of the RKHS.
Basically, the complexity of the $\ell$-th layer RKHS $\calH_\ell$ is controlled by 
the behavior of the eigenvalues of the kernel.
To formally state this notion, 
we introduce the integral operator associated with the kernel $\kell$ defined as 
\begin{align*}
T_\ell: 
& g \mapsto \int_\calX \kell(\cdot, x) g(x) \dd P(x), \\
& \Ltwo(P(X)) \to \Ltwo(P(X)).%~~(\text{for}~g\in \Ltwo(P(X))).
\end{align*}
If the kernel function admits an orthogonal decomposition 
$$
\kell(x,x') = \sum_{j=1}^\infty \mu_j^{(\ell)} \phi_j^{(\ell)}(x)\phi_j^{(\ell)}(x'),
$$
in $L_2(P(X) \times P(X))$ where $(\mu_j^{(\ell)})_{j=1}^\infty$ is the sequence of the eigenvalues 
ordered in decreasing order, 
and 
$(\phi_j^{(\ell)})_{j=1}^\infty$
forms an orthonormal system in $\Ltwo(P(X))$, 
then for $g(x) = \sum_{j=1}^\infty \alpha_j \phi_j^{(\ell)}(x)$,
the integral operation is expressed as $T_\ell g = \sum_{j=1}^\infty \alpha_j  \mu_j^{(\ell)} \phi_j^{(\ell)}$
(see \citet{Book:Steinwart:2008,ConstApp:Steinwart:2012} for more details).
Therefore each eigenvalue $\mu_j^{(\ell)}$ plays a role like a ``filter'' for each component $\phi_j^{(\ell)}$.
Here it is known that for all $g \in \calH_\ell$,
there exists $\bar{h} \in \Ltwo(P(X))$ such that $g = T_\ell \bar{h}$
and $\|g\|_{\calH_\ell} = \|\bar{h}\|_{\Ltwo(P(X))}$ \citep{FCM:Caponetto+Vito:2007,COLT:Steinwart+etal:2009}.
Combining this with \Eqref{eq:ghnormeq}, we have $\|g\|_{\calH_\ell} = \|h\|_{\Ltwo(Q_\ell)} = \|\bar{h}\|_{\Ltwo(P(X))}$

Based on the integral operator $T_\ell$, 
we define the {\it degree of freedom} $N_\ell(\lambda)$ of the RKHS as 
\begin{align}
N_\ell(\lambda) = \Tr [(T_\ell + \lambda )^{-1} T_\ell]
\label{eq:DefDOF}
\end{align}
for $\lambda > 0$.
%If the eigenvalues of the integral operator $T_\ell$ are $\{\mu_j^{(\ell)}\}_{j=1}^\infty$, then 
The degree of freedom can be represented as $N_\ell(\lambda) = \sum_{j=1}^\infty \frac{\mu_j^{(\ell)}}{\mu_j^{(\ell)} + \lambda}$
by using the eigenvalues of the kernel.

Now, we assume that the true function $\ftrue$ satisfies a norm condition as follows.
\begin{Assumption}
\label{ass:hbnormbounds}
For each $\ell$, $\htrue_\ell$ and $\btrue_\ell$ satisfy that
\begin{align*}
& \|\htrue_\ell(\tau, \cdot)\|_{\Ltwo(Q_\ell)} \leq R~~(\forall \tau \in T_\ell), \\
& |\btrue_\ell(\tau)| \leq \Rb~~(\forall \tau \in T_\ell).
\end{align*}
%Equivalently $\|F_\ell(x,\cdot)\|_{\calH_\ell} \leq R$ for all $\ell$ and $\tau \in T_\ell$.
\end{Assumption}
By \Eqref{eq:ghnormeq}, the first assumption $\|\htrue_\ell(\tau, \cdot)\|_{\Ltwo(Q_\ell)} \leq R$ is interpreted as 
$\Ftrue_\ell(\tau,\cdot) \in \calH_\ell$ and $\|\Ftrue_\ell(\tau,\cdot) \|_{\calH_\ell} \leq R$.
This means that the feature map $\Ftrue_\ell(\tau,\cdot) $ in each internal layer is well regulated by the RKHS norm.

Moreover, we also assume that the activation function is scale invariant.
\begin{Assumption}
\label{ass:EtaCondition}
We assume the following conditions on the activation function $\eta$.
%\vspace{-0.4cm}
\begin{itemize}
\setlength{\itemsep}{0pt}
\item $\eta$ is scale invariant: $\eta(a x) = a\eta(x)$ for all $a >0$ and $x\in \Real^d$ (for arbitrary $d$).
\item $\eta$ is 1-Lipschitz continuous: $|\eta(x) - \eta(x')| \leq \|x - x'\|$ for all $x,x'\in \Real^d$.
\end{itemize}
\end{Assumption}
The first assumption on the scale invariance is essential to derive tight error bounds.
The second one 
ensures that deviation in each layer does not affect the output so much.
The most important example of an activation function that satisfies these conditions is ReLU activation.
Another one is the identity map $\eta(x) = x$.

Finally we assume that the input distribution has a compact support. 
\begin{Assumption}
\label{ass:xbounds}
The support of $P(X)$ is compact and it is bounded as  
$$
\|x\|_\infty := \max_{1 \leq i \leq \dx} |x_i| \leq D_x~~(\forall x \in \mathrm{supp}(P(X))).
$$
\end{Assumption}

We consider a finite dimensional approximation $\fstar$ 
given as follows:
let $\mell$ be the number of nodes in the $\ell$-th internal layer
(we set the dimensions of the output and input layers to $m_{L+1} = 1$ and $m_1 = \dx$)
and consider a model
%as 
%Using $\{W^{(\ell)}\}_{\ell = 1}^L$, the approximated function $\fstar$ can be expressed as 
%Instead of the integral form \eqref{eq:flIntegralForm}, we construct an approximated model as  
%\begin{subequations}
\begin{align*}
& \fstar_\ell (g) = \Well{\ell} \eta(g)  + \bell{\ell}~~(g \in \Real^{\mell},~\ell = 2,\dots,L), \\
& \fstar_1 (x) = \Well{1} x + \bell{1}, \\
%& \fstar_L (g) =  \Well{L} g  + \bell{L}.
& \fstar (x) = \fstar_L \circ \fstar_{L-1} \circ \dots \circ \fstar_1 (x),
\end{align*}
%\label{eq:ApproximatedForm}
%\end{subequations}
where $\Well{\ell} \in \Real^{m_{\ell+1} \times m_{\ell}}$ and $\bell{\ell} \in \Real^{m_{\ell + 1}}$.

\begin{Theorem}[Finite approximation error bound of the nonparametric model]
\label{th:FiniteApprox}

For any $1 > \delta > 0$ and $\lambda_\ell >0$, suppose that 
$$
m_\ell \geq 5 N_\ell(\lambda_\ell) \log\left(32 N_\ell(\lambda_\ell)/\delta \right)~~~%\frac{16 N_\ell(\lambda_\ell)}{\delta}\right)~~~
(\ell = 2,\dots,L),
$$ 
then there exist
$
\Well{\ell} \in \Real^{m_{\ell + 1} \times m_\ell}
$
and
$
\bell{\ell} \in \Real^{m_{\ell + 1}}
$
%where $m_{L+1} = 1$ and $m_1 = \dx$ 
such that, by letting $\conedelta = \frac{4}{1-\delta}$,
\begin{subequations}
\label{eq:Wellbellnormbound}
\begin{align}
&\|W^{(\ell)}\|_{\F}^2 \leq \conedelta  R^2~~(\ell = 1,\dots,L), \\
%\|W^{(L)}\|_{\F}^2 \leq \cone R^2, \\
&
\|\bell{\ell}\| \leq \Rb/(1-\delta)~~(\ell = 1,\dots,L),
\end{align}
\end{subequations}
and 
\begin{align}
& \|\ftrue - \fstar\|_{\Ltwo(P(X))}
\leq  %D_x  
\sum_{\ell = 2}^L 2 \sqrt{ \conedelta^{L-\ell} }  R^{L-\ell + 1} \sqrt{\lambda_\ell},
\label{eq:Approxbound}  \\
& \|\fstar\|_\infty
\leq (\sqrt{\conedelta}R)^{L} D_x + \sum_{\ell = 1}^L (\sqrt{\conedelta}R)^{L-\ell} {\textstyle \frac{\Rb}{1-\delta}}.
\end{align}

\end{Theorem}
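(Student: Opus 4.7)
The plan is to construct $\fstar$ one layer at a time, replacing each continuous integral in \eqref{eq:flIntegralForm} by a finite sum of size $m_\ell$ via a kernel-quadrature / Nystr\"om-type approximation of the RKHS $\calH_\ell$, and then propagating the per-layer approximation errors through the network using the Lipschitz structure of $\eta$.

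The technical core is a per-layer approximation lemma that I would establish first. For each $\ell \in \{2,\dots,L\}$ and $\lambda_\ell > 0$, this lemma says that if $m_\ell \geq 5 N_\ell(\lambda_\ell) \log(32 N_\ell(\lambda_\ell)/\delta)$ feature points $\{w_{\ell,i}\}_{i=1}^{m_\ell} \subset \calT_\ell$ are drawn i.i.d.\ from a leverage-score-weighted distribution on $\calT_\ell$ (with density proportional to $\langle \kell(\cdot,w), (T_\ell+\lambda_\ell)^{-1} \kell(\cdot,w)\rangle_{\LPi}$), then with probability at least $1-\delta/L$ the following holds \emph{uniformly}: for every $h \in \Ltwo(Q_\ell)$ with $\|h\|_{\Ltwo(Q_\ell)} \leq R$ there exist coefficients $\alpha \in \Real^{m_\ell}$ with sum-of-squares at most $\conedelta R^2$ such that $\bigl\|\int h(w)\eta(\Ftrue_{\ell-1}(\cdot,w))\,\dd Q_\ell(w) - \sum_i \alpha_i \eta(\Ftrue_{\ell-1}(\cdot, w_{\ell,i}))\bigr\|_{\LPi} \leq 2 R \sqrt{\lambda_\ell}$. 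This is a kernel-quadrature result in the spirit of \citet{bach2015equivalence} and \citet{JMLR:v18:14-546}, proved via a matrix Bernstein concentration bound on the empirical integral operator $\frac{1}{m_\ell}\sum_i \kell(\cdot, w_{\ell,i}) \otimes \kell(\cdot, w_{\ell,i})$, combined with the RKHS/$L_2$ norm equivalence \eqref{eq:ghnormeq}.

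Given the lemma, $\fstar$ is assembled as follows: fix feature samples $\{w_{\ell,i}\}_{i=1}^{m_\ell}$ for $\ell = 2,\dots,L$, identify $\{w_{1,i}\}_{i=1}^{\dx}$ with the input coordinates and $\{w_{L+1,1}\}=\{1\}$ with the scalar output. For each $\ell$ and each row $j \in \{1,\dots,m_{\ell+1}\}$, apply the lemma to $h = \htrue_\ell(w_{\ell+1,j},\cdot)$ (which has $\Ltwo(Q_\ell)$ norm at most $R$ by Assumption \ref{ass:hbnormbounds}) using the \emph{same} feature sample $\{w_{\ell,i}\}_i$, place the resulting coefficient vector in the $j$-th row of $\Well{\ell}$, and set $[\bell{\ell}]_j = \btrue_\ell(w_{\ell+1,j})$. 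The uniform coefficient bound delivered by the lemma together with Assumption \ref{ass:hbnormbounds} yields the norm bounds \eqref{eq:Wellbellnormbound}, with the factor $1/(1-\delta)$ arising from the leverage-score sampling deviation. The $\LPi$ approximation error bound \eqref{eq:Approxbound} then follows by a standard telescoping decomposition: write $\ftrue - \fstar$ as a sum of $L-1$ hybrid differences, each of which replaces layer $\ell$ by its finite approximation while keeping the other layers either continuous (below) or already approximated (above). The $1$-Lipschitzness of $\eta$ (Assumption \ref{ass:EtaCondition}) and the operator-norm control $\|\Well{\ell'}\|_{\mathrm{op}} \leq \|\Well{\ell'}\|_{\F} \leq \sqrt{\conedelta}\, R$ ensure that the layer-$\ell$ error of size $2R\sqrt{\lambda_\ell}$ is amplified by at most $(\sqrt{\conedelta} R)^{L-\ell}$ by the subsequent layers; summing over $\ell$ and applying a union bound across the $L-1$ per-layer failure events gives the claim. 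The $\|\fstar\|_\infty$ bound is obtained by the analogous recursive argument starting from $\|x\|_\infty \leq D_x$, with each layer multiplying the norm of the previous output by at most $\sqrt{\conedelta} R$ and adding at most $\Rb/(1-\delta)$.

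The principal obstacle is the per-layer approximation lemma: the delicate step is getting uniform kernel-quadrature control over \emph{all} $m_{\ell+1}$ rows simultaneously using a \emph{single} common feature sample, so that one matrix $\Well{\ell}$ serves every output node. This uniformity is what forces the sampling distribution to be given by the leverage scores of $T_\ell$ and pins down the sample-size threshold at $\asymp N_\ell(\lambda_\ell)\log N_\ell(\lambda_\ell)$, and it generates the constant $\conedelta = 4/(1-\delta)$. A subsidiary technical point is that scale-invariance of $\eta$ (Assumption \ref{ass:EtaCondition}) is essential for carrying the per-layer RKHS norm bounds across compositions without accruing additional multiplicative constants, so that the geometric factor $\sqrt{\conedelta} R$ appearing in both \eqref{eq:Approxbound} and the $L_\infty$ bound remains dimension-free.
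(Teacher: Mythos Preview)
Your overall architecture---Bach-type per-layer quadrature lemma, telescoping through hybrid networks, Lipschitz propagation---matches the paper. But the explicit construction you describe has a genuine gap: placing the quadrature coefficients for each output node $w_{\ell+1,j}$ directly into row $j$ of $\Well{\ell}$, and setting $[\bell{\ell}]_j = \btrue_\ell(w_{\ell+1,j})$, yields
\[
\|\Well{\ell}\|_{\F}^2 = \sum_{j=1}^{m_{\ell+1}} \|\alpha_j\|^2 \leq m_{\ell+1}\,\conedelta R^2,
\qquad
\|\bell{\ell}\|^2 \leq m_{\ell+1} \Rb^2,
\]
not the dimension-free bounds \eqref{eq:Wellbellnormbound}. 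The same $m_{\ell+1}$ factor then contaminates the error propagation (whether you use the operator norm or the $\ell_2$-over-nodes norm), which is exactly the degradation the paper warns about after the theorem statement.

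The missing mechanism is a specific rescaling that exploits scale invariance in a way you allude to but do not implement. In the paper, the importance-sampling weights $w_i^{(\ell+1)} = q_{\ell+1}(v_i^{(\ell+1)})^{-1/2}$ from the \emph{next} layer's sample, together with a $1/\sqrt{m_{\ell+1}}$ normalization, are absorbed into the current layer's parameters:
\[
\Well{\ell}_{ij} = \sqrt{\tfrac{m_\ell}{m_{\ell+1}}}\, \beta^{(\ell)}_{ij}\, w_i^{(\ell+1)},
\qquad
\bell{\ell}_i = \tfrac{1}{\sqrt{m_{\ell+1}}}\, w_i^{(\ell+1)}\, \btrue_\ell(v_i^{(\ell+1)}).
\]
Scale invariance of $\eta$ makes this a legitimate reparametrization of the same function, and now $\|\Well{\ell}\|_{\F}^2$ becomes a \emph{weighted average} over rows (with weights $w_i^{(\ell+1)2}/m_{\ell+1}$ summing to at most $c_\delta$), hence bounded by $c_1 c_\delta R^2$ independently of $m_{\ell+1}$; similarly for $\bell{\ell}$. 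Correspondingly, the error is propagated in the $\max$-over-nodes norm rather than via the operator norm: each $\tilde f^*_{\ell}$ is shown to be $\sqrt{c_1 c_\delta}\,R$-Lipschitz from $\|\cdot\|_{\max}$ to $\|\cdot\|_{\max}$, and the per-layer lemma gives a $\max$-norm (uniform over output nodes) error of $2R\sqrt{\lambda_\ell}$. Your final paragraph correctly flags that scale invariance is the key to dimension-freeness, but the construction you wrote down does not use it; the actual use is this cross-layer weight absorption.
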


The proof is given in Appendix \ref{app:FiniteApprox}.
This theorem is proven by borrowing the theoretical technique recently developed for the {\it kernel quadrature rule} \citep{bach2015equivalence}.
We also employed some techniques analogous to the analysis of the low rank tensor estimation \citep{ICML:Suzuki:2015,ICML:Kanagawa+etal:2016,suzuki2016minimax}.
Intuitively, the degree of freedom $N_\ell(\lambda_\ell)$ is the intrinsic dimension of the $\ell$-th layer to achieve 
the $\sqrt{\lambda_\ell}$ approximation error. 
Indeed, we show in the proof that the $\ell$-th layer is approximated 
by the $m_\ell$ dimensional nodes with the precision $\sqrt{\lambda_\ell}$ 
under the condition $m_\ell = \Omega(N_\ell(\lambda_\ell) \log(N_\ell(\lambda_\ell)))$.
The error bound \eqref{eq:Approxbound} indicates that 
the total approximation error of the whole network is basically obtained by 
summing up the approximation error $\sqrt{\lambda_\ell}$ of each layer
where the factor $\sqrt{ \conedelta^{L-\ell} }  R^{L-\ell + 1}$ is a 
Lipschitz constant for error propagation.

We would like to emphasize that 
the approximation error bound \eqref{eq:Approxbound}
and 
the norm bounds \eqref{eq:Wellbellnormbound} of $\Well{\ell}$ and $\bell{\ell}$
are independent of the dimensions $(\mell)_{\ell=1}^L$ of the internal layers.
This is due to the scale invariance property of the activation function.
This is quite beneficial to derive a tight generalization error bound.
Indeed, without the scale invariance, %we only have bounds such as 
%$\|W^{(\ell)}\|_{\F}^2 \leq m_{\ell + 1} \conedelta  R^2 $,
%$\|\bell{\ell}\|_{\F}^2 \leq m_{\ell + 1}   \Rb^2 $ and
we only have a much looser bound
$
\|\ftrue - \fstar\|_{\Ltwo(P(X))}
\leq  %D_x  
\sum_{\ell = 2}^L 2  \sqrt{ m_{\ell +1} \conedelta^{L-\ell} }  R^{L-\ell + 1} \sqrt{\lambda_\ell}$, 
and $\|W^{(\ell)}\|_{\F}^2 \leq m_{\ell + 1} \conedelta  R^2 $, $\|\bell{\ell}\|^2 \leq m_{\ell + 1}   \Rb^2 $
which depend on the dimensions $(m_{\ell})_{\ell=1}^L$ and could be huge for small $\lambda_\ell$.
This would support the practical success of using the ReLU activation.

Let the norm bounds shown in Theorem \ref{th:FiniteApprox} be 
$$\Rbar = \sqrt{\conedelta}R ,~~\Rbarb = \Rb/(1-\delta).$$
%To derive the generalization error bound, the $L_\infty$-norm of the 
%functions in the support of the prior should be bounded (with high probability).
Remind that Theorem \ref{th:FiniteApprox} gives an upper bound of the infinity norm of $\fstar$, that is,
$\|\fstar\|_\infty \leq \Rhatinf$ where 
$$
\Rhatinf = 
\Rbar^L D_x + \sum_{\ell = 1}^L \Rbar^{L - \ell} \Rbarb.
%(\sqrt{\cone \cdelta} )^{L} R^{L} D_x + \sum_{\ell = 1}^L (\sqrt{\cone \cdelta} R)^{L-\ell} \Rb.
$$
Let the set of finite dimensional functions with the norm constraint \eqref{eq:Wellbellnormbound} be 
$$
\calF = \{f(x) = (\Well{L} \eta( \cdot) + \bell{L}) \circ \dots 
\circ (\Well{1} x + \bell{1}) \mid \| \Well{\ell}\|_{\F} \leq \Rbar,~\|\bell{\ell}\| \leq \Rbarb~(\ell = 1,\dots,L)\}.
$$
Then, we can show that the infinity norm of $\calF$ is also uniformly bounded as the following lemma.
%This can be extended to all functions in the support of the prior distribution by the following lemma.
\begin{Lemma}
\label{eq:fbounded}
For all $f \in \calF$, it holds that 
%\begin{align*}
$\|f\|_\infty 
\leq \Rhatinf.$
%\end{align*}
\end{Lemma}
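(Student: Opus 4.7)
The plan is to prove the bound by a layer-by-layer induction on the $\ell_2$ norm of the intermediate activations. Concretely, for $x \in \mathrm{supp}(P(X))$ and $f \in \calF$, let $F_\ell(x)$ denote the output of the first $\ell$ layers, so $F_1(x) = \Well{1} x + \bell{1}$ and $F_\ell(x) = \Well{\ell} \eta(F_{\ell-1}(x)) + \bell{\ell}$ for $\ell \ge 2$. I will show by induction on $\ell$ that
\[
\|F_\ell(x)\|_2 \;\le\; \Rbar^\ell D_x + \sum_{k=1}^\ell \Rbar^{\ell-k} \Rbarb,
\]
from which the claim follows by taking $\ell = L$: since $F_L(x) = f(x)$ is a scalar, $|f(x)| = \|F_L(x)\|_2 \le \Rhatinf$, uniformly in $x$.

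For the base case I would use submultiplicativity of the Frobenius norm together with Assumption \ref{ass:xbounds}:
\[
\|F_1(x)\|_2 \le \|\Well{1}\|_{\F}\,\|x\|_2 + \|\bell{1}\|_2 \le \Rbar D_x + \Rbarb,
\]
(with the standard convention matching the one used implicitly in Theorem \ref{th:FiniteApprox} for the $D_x$ term). For the inductive step, I would apply the same Frobenius/operator-norm bound to $\Well{\ell}$ and reduce the nonlinearity via the key observation that Assumption \ref{ass:EtaCondition} forces $\eta(0)=0$: scale invariance gives $\eta(0) = \eta(0\cdot 0) = 0\cdot \eta(0) = 0$, and then 1-Lipschitz continuity yields $\|\eta(v)\|_2 \le \|v\|_2$ coordinate-wise. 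Together with the norm constraints defining $\calF$ this gives $\|F_\ell(x)\|_2 \le \Rbar \|F_{\ell-1}(x)\|_2 + \Rbarb$, and telescoping with the inductive hypothesis closes the induction.

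The proof is essentially a Lipschitz propagation argument, so the real content is conceptual rather than computational: the only nontrivial step is noting that the scale-invariance part of Assumption \ref{ass:EtaCondition} is precisely what guarantees $\eta(0)=0$, which in turn is what allows the Lipschitz bound on $\eta$ to be used multiplicatively on activations rather than only additively (avoiding a stray $\|\eta(0)\| \sqrt{m_\ell}$ term that would scale with the width). Once that is in hand, the bound aggregates through the $L$ layers in exactly the same geometric pattern as in the infinity-norm estimate of Theorem \ref{th:FiniteApprox}, reproducing the definition of $\Rhatinf$. The step I would double-check carefully is the normalization convention used for $D_x$ so that the base case matches the constant appearing in $\Rhatinf$; this is the only place where $\ell_\infty$ versus $\ell_2$ conventions for $x$ interact with the statement.
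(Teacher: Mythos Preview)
Your proposal is correct and follows essentially the same route as the paper. The paper's proof (Lemma~\ref{supplemma:SupnormBounds}) first carries out the recursion for the integral form $\ftrue$ using $L_2(Q_\ell)$ norms and Cauchy--Schwarz, then says the bound for $f\in\calF$ follows ``through a similar argument''---which is precisely your Euclidean $\ell_2$ induction $\|F_\ell(x)\|_2 \le \Rbar\,\|F_{\ell-1}(x)\|_2 + \Rbarb$. Your explicit remark that scale invariance forces $\eta(0)=0$, hence $|\eta(u)|\le|u|$, is in fact a step the paper uses silently (it passes from $\|\eta(\Ftrue_{\ell-1})\|_{L_2}$ to $\|\Ftrue_{\ell-1}\|_{L_2}$ without comment), so you are being more careful here. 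Your caveat about the $D_x$ convention is also well placed: in the integral form the first-layer bound uses $\|x\|_{L_2(Q_1)}\le D_x$ since $Q_1$ is a probability measure, whereas the naive finite-dimensional analogue $\|\Well{1}\|_{\F}\|x\|_2$ would pick up a $\sqrt{\dx}$; the paper absorbs this into the ``similar argument'' without discussion, so your instinct to double-check that step is sound.
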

The proof is given in Appendix \ref{app:LinftyNormBound}. 
Because of this, we can derive the generalization error bound with respect to the population $L_2$-norm %$\|\cdot\|_{\Ltwo{P(X)}}$
instead of the empirical $L_2$-norm. % $\|\cdot\|_n = \sqrt{\frac{1}{n}\sum_{i=1}^n g^2}$.
%The support of the prior distribution is denoted by
One can check that $\|f\|_\infty \leq \Rhatinf$ for all $f \in \calF$ by Lemma \ref{eq:fbounded} or Lemma \ref{supplemma:SupnormBounds}.

\section{Generalization error bounds}

In this section, we define the two estimators in the finite dimensional model introduced in the last section:
%which are of interest in this paper:
the empirical risk minimizer and the Bayes estimator.
The generalization error bounds for both of these estimators are derived.
We also give some examples in which the generalization error  
is analyzed in details.

\subsection{Notations}
Before we state the generalization error bounds, we prepare some notations.
Let 
$
\hat{G} = L \Rbar^{L-1} D_x + \sum_{\ell = 1}^L \Rbar^{L - \ell},
$
and define $\deltanone$, $\deltantwo$ %and $\epsilonn$ 
as\footnote{We define $\logone(x) = \max\{1,\log(x)\}$.} 
\begin{align*}
\deltanone & = \sum_{\ell = 2}^L 2 \sqrt{ \conedelta^{L-\ell} }  R^{L-\ell + 1} \sqrt{\lambda_\ell}, \\
\deltantwo^2 & =
\frac{2}{n} \sum_{\ell=1}^L \mell m_{\ell +1}
\logone \left(
{\textstyle 1 + \frac{4 \sqrt{2} \hat{G}\max\{\Rbar,\Rbarb\}
\sqrt{n}}{
\sigma \sqrt{\sum_{\ell=1}^L \mell m_{\ell +1}}} 
}
\right). %\\
%\epsilondash & = \sqrt{\deltanone^2 + \deltantwo^2}.
\end{align*}
Note that $\deltanone$ is the finite approximation error given in Theorem \ref{th:FiniteApprox}.
Roughly speaking, $\deltantwo$ corresponds to the amount of deviation of 
the estimators in the finite dimensional model.

\subsection{Empirical risk minimization}
\label{sec:EmpiricalRiskMin}

In this section, we define the empirical risk minimizer and investigate its generalization error.
Let the empirical risk minimizer be $\fhat$:
$$
\fhat := \argmin_{f \in \calF} \sum_{i=1}^n (y_i - f(x_i))^2.
$$
Note that  
there exists at least one minimizer because 
the parameter set corresponding to $\calF$ is a compact set and $\eta$ is a continuous function.
$\fhat$ needs not necessarily be the exact minimizer but it could be an approximated minimizer.
We, however,  assume $\fhat$ is the exact minimizer for theoretical simplicity. 
In practice, the empirical risk minimizer is obtained 
by using the back-propagation technique.
The regularization for the norm of the weight matrices and the bias terms are implemented by using the $L_2$-regularization and the drop-out techniques.

The generalization error of the empirical risk minimizer is bounded as in the following theorem.

\begin{Theorem}

For any $\delta > 0$ and $\lambda_\ell >0$, suppose that 
\begin{align}
m_\ell \geq 5 N_\ell(\lambda_\ell) \log\left(32 N_\ell(\lambda_\ell)/\delta \right)~~~%\frac{16 N_\ell(\lambda_\ell)}{\delta}\right)~~~
(\ell = 2,\dots,L).
\label{eq:mellConditionERM}
\end{align}
Then, there exists universal constants $C_1$ such that, for any $r > 0$ and $\tilde{r} > 1$, 
\begin{align*}
\|\fhat - \ftrue\|_{\LPi}^2 \leq %\\ 
& 
C_3
\Bigg\{
\tilde{r} \deltanone^2  + (\sigma^2 + \Rhatinf^2) \deltantwo^2
+
\frac{ (\Rhatinf^2 + \sigma^2)}{n}\left[
\log_+ \left(\frac{\sqrt{n}}{\min\{\sigma/\Rhatinf,1\}}\right)  + r
 \right] %\\ & 
%+ \frac{(1+\sigma^2) \sum_{\ell=1}^L m_\ell m_{\ell + 1}}{n}
%+ \frac{\Rhatinf^2 + \sigma^2}{n} \left( \left\lceil \log_2\left(\frac{\Rhatinf \sqrt{n}}{\min\{\sigma,\Rhatinf\}}\right) \right \rceil + r \right) 
 \Bigg\}
\end{align*}
with probability $1-  \exp\left(-  \frac{n \deltanone^2{(\tilde{r}-1)^2}}{ 11 \Rhatinf^2}   \right) - 2\exp(- r)$
for every $r >0$ and $\tilde{r}' > 1$.
%with probability 
%$1- \exp\left(-  \frac{3 n \deltanone^2{\tilde{r}}^{'2}}{ 32 \Rhatinf^2}   \right)
%-\lceil \log_2(\Rhatinf n/\sigma^2) \rceil \exp[- r^2/2]$.
\end{Theorem}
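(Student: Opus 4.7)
The plan is to combine the deterministic approximation result of Theorem~\ref{th:FiniteApprox} with a localized empirical process argument on the finite dimensional class $\calF$. Under the width condition \eqref{eq:mellConditionERM}, Theorem~\ref{th:FiniteApprox} produces a reference element $\fstar \in \calF$ with $\|\ftrue - \fstar\|_{\LPi} \leq \deltanone$ and $\|\fstar\|_\infty \leq \Rhatinf$, which I would use as a surrogate for $\ftrue$. The ERM basic inequality, rewritten with $y_i = \ftrue(x_i) + \xi_i$, reads
\[
\|\fhat - \ftrue\|_n^2 \leq \|\fstar - \ftrue\|_n^2 + \frac{2}{n}\sum_{i=1}^n \xi_i \, (\fhat - \fstar)(x_i).
\]
On the right, $\|\fstar - \ftrue\|_n^2$ is the empirical mean of i.i.d.\ bounded variables with population expectation at most $\deltanone^2$ and range controlled by $\Rhatinf + \|\ftrue\|_\infty$, so a Bernstein-type concentration delivers $\|\fstar - \ftrue\|_n^2 \leq \tilde r\, \deltanone^2$ on an event of probability at least $1 - \exp(-n\deltanone^2(\tilde r - 1)^2/(11\Rhatinf^2))$, exactly matching the first tail in the theorem.

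The remaining task is to bound the noise-driven term $\frac{2}{n}\sum_i \xi_i(f - \fstar)(x_i)$ uniformly over $f \in \calF$ by $\tfrac{1}{8}\|f - \fstar\|_{\LPi}^2 + C(\sigma^2 + \Rhatinf^2)\deltantwo^2$ plus a tail. For this I would use Dudley's entropy integral on the sub-Gaussian noise process, applied within each shell $\{f : 2^{k-1}\rho \leq \|f - \fstar\|_{\LPi} \leq 2^k\rho\}$ and then peeled. The key input is a covering number estimate for $\calF$. Using Lemma~\ref{eq:fbounded} and the Frobenius/$\ell_2$ norm bounds \eqref{eq:Wellbellnormbound}, one propagates Lipschitz constants through the $L$ nonlinear layers to show that the parameter-to-function map $(\Well{\ell},\bell{\ell})_{\ell=1}^L \mapsto f$ is $\|\cdot\|_\infty$-Lipschitz with constant at most $\hat{G}\max\{\Rbar,\Rbarb\}$. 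A volumetric count of parameter balls then yields $\log \mathcal N(\varepsilon, \calF, \|\cdot\|_\infty) \leq P \log\bigl(C \hat G \max\{\Rbar,\Rbarb\}/\varepsilon\bigr)$ with $P = \sum_{\ell=1}^L \mell m_{\ell+1}$. Chaining this entropy bound against the Gaussian noise and truncating at the scale $\sigma/\Rhatinf$ produces the $\sqrt{P/n}\cdot \|f - \fstar\|_{\LPi}$ scaling whose squared balance is precisely $\deltantwo^2$, while the truncation contributes the outer $\logone(\sqrt n/\min\{\sigma/\Rhatinf,1\})$ and the $r/n$ tail, valid with probability $1 - \exp(-r)$.

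To convert $\|\cdot\|_n^2$ into $\|\cdot\|_{\LPi}^2$, I would apply the same-entropy Talagrand concentration to the shifted squared class $\{(f - \ftrue)^2 : f \in \calF\}$, giving a uniform comparison $\tfrac12\|f - \ftrue\|_{\LPi}^2 \leq \|f - \ftrue\|_n^2 + C\Rhatinf^2\deltantwo^2 + C\Rhatinf^2 r/n$ on a second event of probability $1 - \exp(-r)$; these two events account for the $2\exp(-r)$ in the statement. Substituting, using $\|\fhat - \fstar\|_{\LPi}^2 \leq 2\|\fhat - \ftrue\|_{\LPi}^2 + 2\deltanone^2$ to convert the localized quadratic term back to the quantity of interest, absorbing the resulting small fraction of $\|\fhat - \ftrue\|_{\LPi}^2$ on the left, and collecting constants yields the advertised bound. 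The principal obstacle is the entropy estimate itself: a naive layerwise argument accumulates factors of $\mell$, which would blow up $\deltantwo^2$ far beyond $P\log(n)/n$. What rescues the proof is the scale invariance of $\eta$ from Assumption~\ref{ass:EtaCondition} combined with the dimension-free norm bounds \eqref{eq:Wellbellnormbound} of Theorem~\ref{th:FiniteApprox}, which keep the Lipschitz constants at the $\hat G$ level independent of the widths; a secondary difficulty is isolating the precise truncation log factor inside $\deltantwo$, requiring a Bernstein-type rather than plain sub-Gaussian treatment of each peeling shell.
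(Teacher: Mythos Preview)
Your proposal follows the same overall strategy as the paper: the ERM basic inequality, the parameter--to--function Lipschitz bound yielding $\log N(\varepsilon,\calF,\|\cdot\|_\infty)\lesssim \sum_\ell m_\ell m_{\ell+1}\log(\hat G\max\{\Rbar,\Rbarb\}/\varepsilon)$, Gaussian concentration plus Dudley chaining for the noise term, Talagrand's inequality (with contraction) for the empirical--versus--population $L_2$ comparison, and Bernstein for $\|\fstar-\ftrue\|_n^2\le\tilde r\,\deltanone^2$. The ingredients and their combination are correct.

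The one organizational difference is the norm on which you localize the noise process. The paper peels on \emph{empirical} shells $\calG_\delta=\{f:\|f-\fstar\|_n\le\delta\}$, obtains $\|\fhat-\fstar\|_n^2\lesssim\|\fstar-\ftrue\|_n^2+\Psi_{r,n}$, and only then invokes Talagrand to pass to $\|\cdot\|_{\LPi}$. You instead propose to peel on population shells $\{f:\|f-\fstar\|_{\LPi}\le\delta\}$ and extract a $\sqrt{P/n}\,\|f-\fstar\|_{\LPi}$ scaling directly. But conditionally on $X_n$ the process $f\mapsto\frac1n\sum_i\xi_i(f-\fstar)(x_i)$ is sub-Gaussian with respect to $\frac{\sigma}{\sqrt n}\|\cdot\|_n$, so Dudley's integral runs to the $\|\cdot\|_n$-diameter of the shell, which a population-norm constraint does not control without the Talagrand comparison you have scheduled \emph{after} this step. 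As written, the argument is circular at that point. The fix is immediate: either localize on $\|\cdot\|_n$ first (as the paper does) and convert at the end, or run the Talagrand step before the peeling so that $\|f-\fstar\|_n\lesssim\|f-\fstar\|_{\LPi}+\Rhatinf\deltantwo$ is available uniformly.

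A minor clarification on your last paragraph: the entropy bound itself uses only the $1$-Lipschitz property of $\eta$ together with the \emph{fixed} Frobenius radii $\Rbar,\Rbarb$ defining $\calF$; scale invariance enters upstream, in Theorem~\ref{th:FiniteApprox}, where it is what makes those radii independent of the widths $m_\ell$ in the first place.
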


The proof is given in Appendix \ref{sec:ERMproof}. 
This theorem can be shown by 
evaluating the covering number of the model $\calF$
and applying the local Rademacher complexity technique \citep{IEEEIT:Mendelson:2002,LocalRademacher,Koltchinskii,gine2006concentration}.

%Roughly speaking, since 
It is easily checked that the third term of the right side ($\frac{ (\Rhatinf^2 + \sigma^2)}{n}\big[
\log_+ \left(\frac{\sqrt{n}}{\min\{\sigma/\Rhatinf,1\}}\right)  + r \big]$) is smaller than 
the first two terms, therefore the generalization error bound can be simply evaluated as  
$$
\|\fhat - \ftrue\|_{\Ltwo}^2 = O_p(\deltanone^2 + \deltantwo^2).
$$
Based on a rough evaluation 
$$\deltanone^2 \simeq L \sum_{\ell=1}^L \lambda_\ell,~~~\deltantwo^2   \simeq \sum_{\ell=1}^L \frac{m_\ell m_{\ell + 1}}{n} \log(n),$$
%and $m_\ell \geq $
and the constraint $m_\ell \gtrsim N_\ell(\lambda_\ell) \log(N_\ell(\lambda_\ell))$,
%we note that the term $\deltanone^2$ and the term $\deltantwo^2 + \deltanthree^2$ are in a trade-off relationship.
we can observe the bias-variance trade-off for the generalization error bound %posterior contraction
because, as $\lambda_\ell$ decreases, the required width of the internal layer $\mell$
increases by the condition \eqref{eq:mellConditionERM} and thus 
the deviation $\deltantwo$ in the finite dimensional model should increase.
In other words, if we want to construct a finite dimensional model which well approximates the 
true function, then a more complicated model is required and 
we should pay larger variance of the estimator.
A key notion for the bias-variance trade-off is the degree of freedom $N_\ell(\lambda_\ell)$
which expresses the ``complexity'' of the RKHS $\calH_\ell$ in each layer.
The degree of freedom of a complicated RKHS grows up faster than a simpler one
as $\lambda$ goes to 0.
This is also informative in practice because, 
to determine the width of each layer, the degree of freedom gives a good guidance.
That is, if the degree of freedom is small compared with the sample size, 
then we may increase the width of the layer.
An estimate of the degree of freedom can be computed from the trained network
by computing the Gram matrix corresponding to the kernel induced from the trained network
(where the kernel is defined by the finite sum instead of the integral form)
and using the eigenvalue of the Gram matrix.

To obtain the best generalization error bound, $(\lambda_\ell)_{\ell=1}^L$ should be tuned to balance the bias-variance terms
(and accordingly $(\mell)_{\ell=2}^L$ should also be fine-tuned).
The examples of the best achievable generalization error will be shown in Section \ref{sec:GenErrorExamples}.

\subsection{Bayes estimator}
\label{sec:GenErrorBound}

In this section, we formulate a Bayes estimator and derive its generalization error.
% for estimating the finite dimensional model that is obtained in the last section.
To define the Bayes estimator, we just need to specify the prior distribution.
Let $\calB_d(C)$ be the ball in the Euclidean space $\Real^d$ with radius $C > 0$ 
$(\calB_d(C) = \{x \in \Real^d \mid \|x\| \leq C\})$,
and $\Unif(\calB_d(C) )$ be the uniform distribution on the ball $\calB_d(C)$.
Since Theorem \ref{th:FiniteApprox} ensures the norms of $\Well{\ell}$ and $\bell{\ell}$
are bounded above by $\Rbar$ and $\Rbarb$,
it is natural to employ a prior distribution %We consider the simplest situation in which 
that possesses its support on the set of parameters with norms not greater than those norm bounds. 
Based on this observation, we employ uniform distributions on balls with the radii indicated above 
as a prior distribution:
$$
\Well{\ell} \sim \Unif (  \calB_{m_{\ell + 1} \times m_{\ell}}(\Rbar)),~~
\bell{\ell} \sim \Unif (\calB_{m_{\ell + 1}}(\Rbarb)).
%N(0, \sigmap{\ell} \Id),~~\btilell{\ell} \sim N(0, \sigmapb{\ell} ).
$$
In practice, the Gaussian distribution is also employed
instead of the uniform distribution.
However, the Gaussian distribution 
does not give good tail probability bound for the infinity norm of the deep neural network model.
That is crucial to develop the generalization error bound.
For this reason, we decided to analyze the uniform prior distribution. 
%for theoretical simplicity.

The prior distribution on the parameters $(\Well{\ell},\bell{\ell})_{\ell=1}^L$ induces the distribution of the function $f$
in the space of continuous functions endowed with the Borel algebra corresponding to the $L_\infty(\Real^{\dx})$-norm.
We denote by $\Pi$ the induced distribution.
Using the prior, the posterior distribution is defined via the Bayes principle:
$$
\Pi(\dd f | D_n) = \frac{\exp(- \sum_{i=1}^n \frac{(y_i - f (x_i))^2}{2\sigma^2}) \Pi(\dd f)}{\int \exp(- \sum_{i=1}^n \frac{(y_i - f' (x_i))^2}{2\sigma^2}) \Pi(\dd f')}.
$$
Since the purpose of this paper is to give a theoretical analysis for the generalization error, we do not pursue the computational issue of 
the Bayesian deep learning. See, for example, \citet{hernandez2015probabilistic,blundell2015weight} for practical algorithms.

%\subsubsection{Posterior contraction rate}
The following theorem gives 
how fast the Bayes posterior contracts around the true function.

%\subsection{Posterior contraction with respect to the $L_2$-norm}

\begin{Theorem}
\label{th:PosteriorContraction}

Fix arbitrary $\delta > 0$ and $\lambda_\ell >0~(\ell=1,\dots,L)$, 
and suppose that the condition \eqref{eq:mellConditionERM} on $\mell$ is satisfied.
%\begin{align}
%m_\ell \geq 5 N_\ell(\lambda_\ell) \log\left(16 N_\ell(\lambda_\ell)/\delta \right)~~~%\frac{16 N_\ell(\lambda_\ell)}{\delta}\right)~~~
%(\ell = 2,\dots,L),
%\label{eq:mellCondition}
%\end{align}
Then, for all $r \geq 1$, %1$,
the posterior tail probability can be bounded as 
\begin{align*}
& \EE_{D_n}\left[  \Pi(f : \|f - \ftrue \|_{\Ltwo(P(X))} \geq (\deltanone + \sigma \deltantwo) r \sqrt{\max\{ 12, 33 \textstyle \frac{\Rhatinf^2}{\sigma^2} \}}   | D_n) \right] \\
&\leq  \exp\left[-n\deltanone^2 \frac{ (r^2-1)^2}{11 \Rhatinf^2}\right] 
%& ~~~~~
+ 12 \exp\left(- n (\deltanone +  \sigma \deltantwo)^2 \frac{ r^2}{8\sigma^2} \right).
\end{align*}

\end{Theorem}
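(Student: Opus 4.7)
I would follow the standard nonparametric Bayes posterior-contraction framework of Ghosal--Ghosh--van der Vaart and Ghosal--van der Vaart, adapted to the Gaussian regression likelihood. The three ingredients are a prior mass lower bound on a Kullback--Leibler neighborhood of $\ftrue$, a metric entropy bound on the hypothesis set, and the construction of exponentially consistent tests; these combine to give a tail bound of exactly the stated two-term form. The sieve is taken to be $\calF$ itself, since the prior is supported there, so no truncation term arises.

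For the prior mass ingredient, Theorem~\ref{th:FiniteApprox} supplies a parameter configuration $(\Wstarell{\ell}, \bstarell{\ell})$ realising $\fstar \in \calF$ with $\|\fstar - \ftrue\|_{\LPi} \leq \deltanone$ and $\|\fstar\|_\infty \leq \Rhatinf$. The map from parameters to the output function is Lipschitz in each $(\Well{\ell}, \bell{\ell})$ with Lipschitz constant bounded by a product of the weight norm bounds $\Rbar$ and $D_x$, since the activation is $1$-Lipschitz; this yields $\|f - \fstar\|_\infty \leq \hat{G} \max\{\Rbar,\Rbarb\} \cdot \|\theta - \theta^\ast\|$ for some geometric constant. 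A uniform prior on the product of balls therefore assigns mass at least $\prod_\ell (c_\ell \epsilon)^{m_\ell m_{\ell+1}}$ to such a parameter neighbourhood, and converting to a KL ball around the law $P_{\ftrue}$ in the Gaussian model (using $K(P_{\ftrue}, P_f) \asymp \|f - \ftrue\|_{\LPi}^2/\sigma^2$, with the $\Rhatinf$-dependence controlling the higher moments) gives $-\log \Pi(B_{KL}(\epsilon_n)) \lesssim n \deltantwo^2$ for $\epsilon_n \asymp \deltanone + \sigma \deltantwo$. This is precisely where the definition of $\deltantwo^2 \propto \frac{1}{n}\sum_\ell m_\ell m_{\ell+1}\log(\cdot)$ is engineered to match.

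For the entropy ingredient, the same parameter-Lipschitz argument yields $\log N(\epsilon, \calF, \|\cdot\|_\infty) \leq \sum_\ell m_\ell m_{\ell+1} \log(c \hat{G}\max\{\Rbar,\Rbarb\}/\epsilon)$, which at $\epsilon = \sigma \deltantwo$ is controlled by $n \deltantwo^2$. For the testing ingredient, standard likelihood-ratio tests in the Gaussian regression model give, for any $f_1 \in \calF$ with $\|f_1 - \ftrue\|_{\LPi} \geq M \epsilon_n$, a test $\phi_{f_1}$ with Type-I and Type-II errors bounded by $\exp(-c n \epsilon_n^2 / \max\{\sigma^2, \Rhatinf^2\})$; the denominator arises because the sup-norm bound $\Rhatinf$ governs the conversion between squared $L_2$ distance and Hellinger/KL distance for bounded regression functions, and is the origin of the $\max\{12, 33\Rhatinf^2/\sigma^2\}$ factor in the statement.

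Combining these, the standard ``denominator--numerator'' split applies: on the event $\{\int \prod_i p_f/p_{\ftrue}\, d\Pi(f) \geq \exp(-K n \deltanone^2/\Rhatinf^2)\Pi(B_{KL})\}$, which has $P_{\ftrue}$-probability at least $1 - \exp(-n\deltanone^2 (r^2-1)^2/(11 \Rhatinf^2))$ by a Chebyshev-type bound on the log-likelihood-ratio (this is where the first exponential term is born and why $(r^2-1)^2$ rather than $r^2$ appears), the numerator is bounded by chaining the tests over an $\epsilon_n$-net of $\{f : \|f - \ftrue\|_{\LPi} \geq M\epsilon_n\}$. Summing covering ball counts against the test errors gives the second exponential term, with the factor $12$ emerging from the combined bound over at most a few geometric shells and the two halves of the test error. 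The main obstacle I anticipate is bookkeeping of constants: the precise interplay between $\deltanone$ (approximation) and $\sigma \deltantwo$ (estimation) in the KL prior-mass inequality, and the $\Rhatinf/\sigma$ conversion when moving from $L_\infty$ perturbations of parameters to KL distances between Gaussian regression laws, so that the final contraction radius has exactly the asymmetric form $(\deltanone + \sigma \deltantwo) r \sqrt{\max\{12, 33 \Rhatinf^2/\sigma^2\}}$ rather than a weaker symmetric version.
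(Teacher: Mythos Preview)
Your high-level framework (Ghosal--Ghosh--van der Vaart: prior mass, entropy, tests) matches the paper's, and your identification of the prior-mass and entropy ingredients via the parameter-to-function Lipschitz map is correct. However, you have misidentified where the $\Rhatinf$-dependent terms come from, and this reflects a structural gap in your outline.

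The paper does \emph{not} work directly in $\LPi$. Because the Gaussian regression likelihood depends on $f$ only through $(f(x_i))_{i=1}^n$, the tests from Lemma~13 of van der Vaart--van Zanten separate functions in the \emph{empirical} norm $\|\cdot\|_n$, with error exponents of order $n\epsilon_n^2/\sigma^2$ and no $\Rhatinf$ anywhere. Likewise the denominator bound (their Lemma~14) is conditional on $X_n$ and involves $\|f-\ftrue\|_n$, not $\|f-\ftrue\|_{\LPi}$. So the paper first proves an in-sample contraction bound in $\|\cdot\|_n$, and only then converts to $\|\cdot\|_{\LPi}$ by a separate argument. Both appearances of $\Rhatinf$ in the statement come from \emph{Bernstein's inequality} for this empirical-to-population conversion, not from KL/Hellinger bounds: the term $\exp[-n\deltanone^2(r^2-1)^2/(11\Rhatinf^2)]$ is the probability that $\|\fstar-\ftrue\|_n$ exceeds $r\|\fstar-\ftrue\|_{\LPi}$ (this is why $(r^2-1)^2$ appears, via the substitution $\tilde r' = r^2-1$ in Bernstein), and the factor $33\Rhatinf^2/\sigma^2$ arises from a second Bernstein application bounding $P(\|f-\ftrue\|_{\LPi}\geq \sqrt{2}\|f-\ftrue\|_n)$ uniformly over the posterior, which must then be balanced against the $\sigma^{-2}$-scaled denominator exponent.

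Your proposal to build tests with Type-I/II errors of order $\exp(-cn\epsilon_n^2/\max\{\sigma^2,\Rhatinf^2\})$ directly in $\LPi$ would not go through as stated: the likelihood cannot distinguish $f$ from $\ftrue$ when $\|f-\ftrue\|_n$ is small, regardless of $\|f-\ftrue\|_{\LPi}$. You must insert the two-stage structure (in-sample contraction, then Bernstein for the $\|\cdot\|_n \leftrightarrow \|\cdot\|_{\LPi}$ comparison) to recover the stated constants.
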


The proof is given in Appendix \ref{sec:PosteriorContractionProof}.
The proof is accomplished by using the technique for non-parametric Bayes methods \citep{AS:Ghosal+Ghosh+Vaart:2000,AS:Vaart&Zanten:2008,JMLR:Vaart&Zanten:2011}.
Roughly speaking this theorem indicates that 
the posterior distribution concentrates 
in the distance $\deltanone + \sigma \deltantwo$ from the true function $\ftrue$.
The tail probability is sub-Gaussian
and thus the posterior mass outside the distance $\deltanone + \sigma \deltantwo$ from the true function 
rapidly decrease.
%Because of this, we can bound the population $L_2$-norm 
Here we again observe that there appears bias-variance trade-off between $\deltanone$ and $\deltantwo$.
This can be understood essentially in the same way as the empirical risk minimization.

From the posterior contraction rate, we can derive the generalization error bound of the 
posterior mean.
\begin{Corollary}
Under the same setting as in Theorem \ref{th:PosteriorContraction},
there exists a universal constant $C_1$ such that 
the generalization error of the posterior mean $\fhat$ is bounded as 
\begin{align*}
& \EE_{D_n}\left[   \|\fhat - \ftrue \|_{\Ltwo(P(X))}^2 \right]  
\leq C_1 \max\left\{12,{\textstyle \frac{33\Rhatinf^2}{\sigma^2}}\right\} 
\left[ \left(1 + \frac{\Rhatinf}{\sqrt{n \deltanone^2}}  \right) (\deltanone^2 + \sigma^2 \deltantwo^2)+ \frac{\sigma^2}{n}  \right].
\end{align*}
\end{Corollary}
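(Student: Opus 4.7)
The plan is to reduce bounding $\EE_{D_n}[\|\fhat - \ftrue\|^2_{\LPi}]$ to integrating the posterior tail probability supplied by Theorem~\ref{th:PosteriorContraction}.

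First I pass from the posterior mean to the full posterior by Jensen's inequality. Since $\fhat = \int f\, \dd \Pi(f \mid D_n)$ is the Bochner mean of the posterior and $f \mapsto \|f - \ftrue\|^2_{\LPi}$ is convex, we have
\[
\|\fhat - \ftrue\|^2_{\LPi} \;\leq\; \int \|f - \ftrue\|^2_{\LPi}\, \dd \Pi(f \mid D_n).
\]
Taking expectation over $D_n$, swapping integration orders by Fubini--Tonelli, and using the layer-cake representation $\EE[X] = \int_0^\infty \Pr(X \geq t)\,\dd t$ for nonnegative $X$ gives
\[
\EE_{D_n}\!\left[\|\fhat - \ftrue\|^2_{\LPi}\right] \;\leq\; \int_0^\infty \EE_{D_n}\!\left[\Pi\!\left(f : \|f - \ftrue\|_{\LPi} \geq \sqrt{t} \,\big|\, D_n\right)\right] \dd t.
\]

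Next I set $M := \sqrt{\max\{12,\, 33\Rhatinf^2/\sigma^2\}}$ and $a := (\deltanone + \sigma \deltantwo) M$, so that Theorem~\ref{th:PosteriorContraction}, applied with $r = s/a$ for $s \geq a$, bounds the integrand by $\exp[-c_1 (r^2-1)^2] + 12 \exp[-c_2 r^2]$, where $c_1 := n \deltanone^2 / (11 \Rhatinf^2)$ and $c_2 := n(\deltanone + \sigma \deltantwo)^2/(8\sigma^2)$. I then split the outer integral at $t = a^2$: on $[0, a^2]$ the trivial bound $\Pi(\cdot) \leq 1$ contributes at most $a^2 \leq 2 M^2 (\deltanone^2 + \sigma^2 \deltantwo^2)$, using $(\deltanone + \sigma \deltantwo)^2 \leq 2(\deltanone^2 + \sigma^2 \deltantwo^2)$. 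On $(a^2, \infty)$, the changes of variables $t = s^2$ followed by $s = ar$ transform the integral into $2a^2 \int_1^\infty r\, \EE_{D_n}[\Pi(\|f - \ftrue\|_{\LPi} \geq ar \mid D_n)]\, \dd r$, which is at most the sum of $2a^2 \int_1^\infty r\,\exp[-c_1(r^2-1)^2]\,\dd r$ and $24 a^2 \int_1^\infty r\,\exp[-c_2 r^2]\,\dd r$.

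The two one-dimensional tail integrals are elementary. For the first, the substitution $u := r^2 - 1$ gives $\int_1^\infty r\exp[-c_1(r^2-1)^2] \dd r = \tfrac12 \int_0^\infty \exp(-c_1 u^2)\,\dd u = O(1/\sqrt{c_1}) = O(\Rhatinf/(\sqrt{n}\,\deltanone))$. For the second, $v := r^2$ gives $\int_1^\infty r\exp[-c_2 r^2]\,\dd r \leq 1/(2c_2) = O(\sigma^2/(n(\deltanone + \sigma \deltantwo)^2))$. Multiplying by the prefactor $a^2 \asymp M^2(\deltanone + \sigma \deltantwo)^2 \asymp M^2(\deltanone^2 + \sigma^2\deltantwo^2)$, the first contributes $O\bigl(M^2 \Rhatinf (\deltanone^2 + \sigma^2 \deltantwo^2) / (\sqrt{n}\,\deltanone)\bigr)$ and the second contributes $O(M^2 \sigma^2/n)$. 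Summing with the bulk then yields exactly
\[
C_1 M^2 \Bigl[(1 + \Rhatinf/\sqrt{n\deltanone^2})(\deltanone^2 + \sigma^2 \deltantwo^2) + \sigma^2/n\Bigr],
\]
with all numerical factors absorbed into the universal $C_1$.

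The only mildly subtle step is the $(r^2 - 1)^2$ Gaussian-type integral; at first glance the nonstandard exponent looks awkward, but the substitution $u = r^2 - 1$ is exactly aligned with the radial weight $r\,\dd r = \tfrac12\,\dd u$, so the integral collapses to a half-Gaussian. Beyond this observation the argument is pure bookkeeping, so I do not expect any real obstacle.
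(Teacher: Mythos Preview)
Your proposal is correct and is exactly the standard route from a posterior contraction statement to a risk bound on the posterior mean: Jensen's inequality, the layer-cake formula, and then integrating the tail bound of Theorem~\ref{th:PosteriorContraction}. The paper does not spell out a proof of the Corollary (it only says the bound follows from the contraction rate), so your argument is the intended derivation, and your handling of the two tail integrals---in particular the substitution $u=r^2-1$ that turns the $(r^2-1)^2$ exponent into a half-Gaussian---is clean and matches the claimed form of the bound term by term.
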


Therefore, for sufficiently large $n$ such that $n \deltanone^2/\Rhatinf^2 \geq 1$ (which is the regime of our interest),
%, and usually they are much larger than $1$),
the generalization error is simply bounded as 
$$
\|\fhat - \ftrue \|_{\Ltwo(P(X))}^2 =O_p\left(
{\textstyle \max\{1,{\textstyle \frac{\Rhatinf^2}{\sigma^2}}\}}  %(1+\Rhatinf)
 (\deltanone^2 + \sigma^2 \deltantwo^2) \right).
$$
%In a setting that $\sqrt{\conedelta^L}R^{L} = O(1)$, $\epsilonn^2$ can be bounded as 
%$$
%\epsilonn^2 = O\left( L \sum_{\ell=2}^L \lambda_\ell + \sigma^2 \sum_{\ell=1}^L \frac{m_\ell m_{\ell+1}}{n} \log(n) \right).
%$$
%To minimize the generalization error bound, we may balance the bias and variance terms.
%One way to balance the terms is to set $\lambda_\ell$ so that 
%$$
%\sum_{\ell=2}^L \lambda_\ell = \sigma^2 \sum_{\ell=1}^L \frac{m_\ell m_{\ell+1}}{n}
%$$
%where the $\log(n)$-factor and $L$ are dropped for simplicity.
%Since the inequality of arithmetic sum geometric mean gives $\sum_{\ell=1}^L \frac{m_\ell m_{\ell+1}}{n} \leq \sum_{\ell=1}^{L+1} \frac{m_\ell^2}{n}$,
%we may set $m_\ell$ to satisfy
%\begin{align}
%\lambda_\ell = \sigma^2 \frac{m_\ell^2}{n}~~~(\ell=2,\dots,L). % \simeq \frac{N_{\ell}(\lambda_\ell)\log()}{n}
%\label{eq:lambdaellmellbound}
%\end{align}
%Considering this relation and the constraint $m_\ell \gtrsim N_{\ell}(\lambda_\ell) \log(N_{\ell})$ (\Eqref{eq:mellCondition}),
%we can estimate the best width $m_\ell$ that minimizes the upper bound of the generalization error. % bias-variance trade-off.

\subsection{Examples}

\label{sec:GenErrorExamples}

Here, we give some examples of the generalization error bound.
We have seen that both of the empirical risk minimizer and the Bayes estimators have a simplified generalization error bound as 
$$
\|\fhat - \ftrue \|_{\Ltwo(P(X))}^2 = O_p(\deltanone^2 + \deltantwo^2) = O_p\left( L \sum_{\ell=2}^L \Rbar^{L-\ell + 1}
\lambda_\ell + \sum_{\ell=1}^L \frac{m_\ell m_{\ell+1}}{n} \log(n) \right),
$$
by supposing $\sigma$, $\Rhatinf$ %be constant, % and as a constant, the bound can be much simplified as 
%and supposing 
and $\sqrt{\conedelta^L}R^{L}$ are in constant order. % = O(1)$.
We evaluate the bound under the best choice of $\mell$ balancing the bias-variance trade-off.  %satisfying \Eqref{eq:lambdaellmellbound}.

%To minimize the generalization error bound, we may balance the bias and variance terms.
One way to balance the terms is to set $\lambda_\ell$ so that 
$$
\sum_{\ell=2}^L \lambda_\ell =  \sum_{\ell=1}^L \frac{m_\ell m_{\ell+1}}{n}
$$
where the $\log(n)$-factor and $L$ are dropped for simplicity.
Since the inequality of arithmetic sum geometric mean gives $\sum_{\ell=1}^L \frac{m_\ell m_{\ell+1}}{n} \leq \sum_{\ell=1}^{L+1} \frac{m_\ell^2}{n}$,
we may set $m_\ell$ to satisfy
\begin{align}
\lambda_\ell = \frac{m_\ell^2}{n}~~~(\ell=2,\dots,L). % \simeq \frac{N_{\ell}(\lambda_\ell)\log()}{n}
\label{eq:lambdaellmellbound}
\end{align}
Considering this relation and the constraint $m_\ell \gtrsim N_{\ell}(\lambda_\ell) \log(N_{\ell})$ (\Eqref{eq:mellConditionERM}),
we can estimate the best width $m_\ell$ that minimizes the upper bound of the generalization error. % bias-variance trade-off.

%$$
%\deltanone^2 + \max\left\{1,\frac{\Rhatinf^2}{\sigma^2}\right\} \deltantwo^2 \leq \max\left\{1,\frac{\Rhatinf^2}{\sigma^2}\right\} \epsilonn^2.
%$$

\subsubsection{Finite dimensional internal layer}
\label{sec:FiniteDimExample}

If all RKHSs are finite dimensional, say $m^*_\ell$-dimensional.
Then $N_\ell(\lambda) \leq m^*_{\ell}$ for all $\lambda \geq 0$.
Therefore, by setting $\lambda_\ell = 0~(\forall \ell)$, 
the generalization error bound is obtained as 
\begin{align}
\label{eq:deepNNconvergenceFiniteDim}
\|\fhat - \ftrue\|_{\LPi}^2 \lesssim \frac{\sigma^2 + \Rhatinf^2}{n} \sum_{\ell=1}^L m^*_{\ell}m^*_{\ell+1} \log(n),
\end{align}
where we omitted the factors depending only on $\log(\Rbar \Rbarb \hat{G})$.
%$\sigma^2$ and $\Rhatinf^2$.
Note that, although there appears the $L_\infty$-norm bound $\Rhatinf$,
this convergence rate is 
independent of the Lipschitz constant $\Rbar^{L-\ell+1}$ and $\Rbarb$ 
up to $\log$-order
but is solely dependent on the number of parameters.
Moreover, the convergence rate is $O(\log(n)/n)$ in terms of the sample size $n$.
This is much faster than the existing bounds that utilize the Rademacher complexity because their bounds are $O(1/\sqrt{n})$.
This result matches more precise arguments for a finite dimensional 3-layer neural network based on asymptotic expansions \citep{fukumizu1999generalization,watanabe2001learning}
which also showed the generalization error of the 3-layer neural network can be evaluated as $O((m_1^*m_2^* + m_2^* m_3^*)/n)$.

\subsubsection{Polynomial decreasing rate of eigenvalues}

We assume that the eigenvalue $\mu_j^{(\ell)}$ decays in polynomial order 
as 
\begin{equation}
\label{eq:mujboundcondition}
\mu_j^{(\ell)} \leq a_\ell j^{-\frac{1}{s_\ell}},
\end{equation}
for a positive real $0 < s_\ell <1$ and $a_\ell >0$.
This is a standard assumption in the analysis of kernel methods \citep{FCM:Caponetto+Vito:2007,Book:Steinwart:2008},
and it is known that this assumption is equivalent to the usual covering number assumption \citep{COLT:Steinwart+etal:2009}.
For small $s_\ell$, the decay rate is fast and it is easy to approximate the kernel 
by another one corresponding to a finite dimensional subspace. 
Therefore small $s_\ell$ corresponds to a simple model and large $s_\ell$ corresponds to a complicated model.
In this setting, the degree of freedom is evaluated as 
\begin{align}
N_\ell(\lambda_\ell) 
\lesssim \left( \lambda_\ell/a_\ell \right)^{-s_\ell}. %\frac{\lambda_\ell}{a_\ell} \right)^{-s_\ell}
\label{eq:Nlambdabound_eigenvalue}
\end{align}
This can be shown as follows:
for any positive integer $M$, the degree of freedom can be bounded as 
\begin{align*}
N_\ell(\lambda_\ell) & = \sum_{j=1}^\infty \frac{\mu_j^{(\ell)}}{\mu_j^{(\ell)} + \lambda_\ell}
\leq  M + \sum_{j=M+1}^\infty \frac{\mu_j^{(\ell)}}{\lambda_\ell} 
%\\ & 
\leq M + \frac{a_\ell}{\lambda_\ell} \int_{M}^\infty x^{-1/s_\ell}  \dd x  \\
& \leq M + (a_\ell/\lambda_\ell) (1-1/s_\ell)^{-1} M^{1-1/s_\ell}.
\end{align*}
Letting $M = \left\lceil (a_\ell/\lambda_\ell) (1-1/s_\ell )^{-1} \right\rceil^{s_\ell}$ to balance the first and the second term, we obatain 
the evaluation \eqref{eq:Nlambdabound_eigenvalue}.
Hence, we can show that, according to \Eqref{eq:lambdaellmellbound},
$$
\lambda_\ell =  a_\ell^{\frac{2s_\ell}{1 + 2s_\ell}} n^{- \frac{1}{1+2 s_\ell}}
$$
gives the optimal rate, and we obtain the generalization error bound as
\begin{align}
\label{eq:deepNNconvergencePoly}
\|\fhat - \ftrue\|_{\LPi}^2 \lesssim
  L \sum_{\ell=2}^L (\Rbar \vee 1)^{2(L-\ell+1)}  a_\ell^{\frac{2s_\ell}{1+2s_\ell}} n^{-\frac{1}{1+2 s_\ell}} \log \left(n\right) + \frac{\dx^2}{n} \log(n),
\end{align}
where we omitted the factors depending on $s_\ell, \log(\Rbar \Rbarb \hat{G})$,
$\sigma^2$ and $\Rhatinf$.
%where $C_2$ is a constant depending on only $s_\ell$
This indicates that the complexity $s_\ell$ of the RKHS affects the convergence rate directly.
As expected, if the RKHSs are simple (that is, $(s_\ell)_{\ell=2}^L$ are small), 
we obtain faster convergence.
%In particular, if $s_\ell < 1/2$, then the convergence rate is faster than the usual convergence rate $1/\sqrt{n}$ derived from a simple Rademacher complexity analysis.

\subsubsection{One internal layer: kernel method}

Finally, we consider a simple but important situation 
in which there is only one internal layer ($L=2$).
In this setting, we only need to adjust $m_2$ because the dimensions of input and output are fixed as $m_1 = \dx$ and $m_3 =1$.
We assume that the same condition \eqref{eq:mujboundcondition} for $\ell=2$.
Then, applying the condition \eqref{eq:lambdaellmellbound}, 
\begin{align*}
& \lambda_2 =  \frac{(\dx +1)m_2}{n} ~~~%\\
\Rightarrow ~~~ %&
\lambda_2 \simeq  a_2^{\frac{s_2}{1+s_2}} \left(\frac{n}{\dx + 1}\right)^{-\frac{1}{1+s_2}} \log(n)
\end{align*}
gives the optimal convergence rate. Actually, we obtain
$$
\|\fhat - \ftrue\|_{\LPi}^2 \lesssim ((\Rbar\vee 1)^2 a_2^{\frac{s_2}{1+s_2}})  (\dx+1)^{\frac{1}{1+s_2}}  n^{-\frac{1}{1+s_2}} \log(n).
$$
This convergence rate is equivalent to the minimax optimal convergence rate 
of the kernel ridge regression \citep{FCM:Caponetto+Vito:2007,COLT:Steinwart+etal:2009} (up to
constant and $\log(n)$ factors).
It is known that the kernel method corresponds to the 3-layer neural network 
with an infinite dimensional internal layer.
In that sense, our analysis includes that of kernel methods.
In particular, the finite dimensional approximation we performed in Section \ref{sec:FiniteApproximation}
can be seen as the kernel quadrature rule \citep{bach2015equivalence}.
Thus, the analysis here ensures that the kernel quadrature rule can achieve the optimal rate 
as a byproduct of the neural network analysis.
In that sense, we can say that the deep neural network is a method that constructs an optimal kernel 
in a layer-wise manner
using a kernel quadrature rule.

Some concrete examples have bee investigated in \cite{JMLR:v18:14-546} for the three layer neural network.
However, the analysis does not assume that the loss function is strongly convex,
and thus the local Rademacher complexity analysis is not applied.
Consequently, the convergence rate is slower than $O(1/\sqrt{n})$.

The convergence rate is $O(n^{-\frac{1}{1+s_2}})$ when $L=2$, but 
we have already observed that the convergence rate \eqref{eq:deepNNconvergencePoly} of the deep neural network 
is basically $\sum_{\ell=2}^L n^{-\frac{1}{1+2 s_\ell}}$:
The sample complexity in each layer is slow for deep neural network (there is a factor $2$ before $s_\ell$).
This is because we need to estimate a matrix in each layer for deep neural network
and the number of output grows up as the sample size increases, and as a result,  
the number of parameters that should be estimated is much larger 
than the 3-layer neural network in which the dimensions of the input and output are fixed.

%\section{Capacity analysis}

\section{Relations to existing work}

In this section, we describe the relation of our work to existing works

The sample complexity of deep neural network has been extensively studied 
by analyzing its Rademacher complexity.
For example, \citet{bartlett1998sample} characterized the generalization error of 
a 3-layer neural network by the norm of the weight vectors instead of the number of parameters.
\citet{koltchinskii2002empirical} studied more general deep neural network and 
derived its generalization error bound of deep neural network under a norm constraint.
They showed the Rademacher complexity of the deep neural network is bounded by 
the sum of those of single layer neural networks. 
This is similar to our generalization error bound obtained in  Eqs.\eqref{eq:deepNNconvergencePoly} and \eqref{eq:deepNNconvergenceFiniteDim}.
More recently, 
\citet{COLT:Neyshabur+Tomioka+Srebro:2015}
analyzed the Rademacher complexity of the deep neural network 
based on the norms of the weight matrix ($\{\Well{\ell}\}_{\ell=1}^L$ in our paper).
\citet{sun2015large} also derived the Rademacher complexity and showed the 
generalization error under a large margin assumption.
As consequences of these studies,
they derived 
the following type of inequalities:
$$
%\textstyle
\EE_{X,Y}[l(\fhat(X),Y)] \leq \frac{1}{n}\sum_{i=1}^n l(\fhat(x_i),y_i) + \frac{C R^{2L} }{\sqrt{n}}
$$
with high probability 
where $l$ is a loss function, $\fhat$ is the empirical risk minimizer with or without regularization and $R$ is a norm bound of the internal layers
(the definition of the norm differs between papers).
Basically, these studies considered the finite dimensional situation 
which was studied in Section \ref{sec:FiniteDimExample} as a special case of our analysis,
and a connection to an infinite dimensional model has not been closely discussed.
In particular, the bias-variance trade-off has not been analyzed.
Moreover, %The most difference from these works is that 
the generalization error is $O(1/\sqrt{n})$ which is much slower dependency 
on the sample size than 
that of our rate $O(1/n)$.
%To improve the convergence rate to faster one than $O(1/\sqrt{n})$ for empirical risk minimization approaches,
%so called {\it local Rademacher complexity} should be evaluated.
%However, to derive non-trivial analysis of the local Rademacher complexity, strong convexity of the generalization error is required.
%Obviously, the deep neural network model is a singular model and the generalization error does not possess the strong convexity 
%with respect to the parameters $\{(\Well{\ell},\bell{\ell})\}_{\ell=1}^L$. 
%This prevents us from developing faster rate.
%On the other hand, the Bayes estimator does not require strong convexity and can achieve $O(1/n)$ convergence.
This is a big difference from the existing analysis.
To improve the convergence rate to faster one for empirical risk minimization approaches, so called  {\it local Rademacher complexity} 
was important in our analysis. 
Moreover, we have observed that the Bayesian analysis also gave faster convergence rate.
%with the covering number evaluation in our proof would be useful.
%We defer this issue to the future work.

Analysis of the bias-variance trade-off in three layer neural network from the kernel point of view has been investigated by \cite{JMLR:v18:14-546}.
The analysis is given for several concrete examples. However, the loss function is not assumed to be strongly convex, and thus the obtained rate 
is not faster than $O(1/\sqrt{n})$.

Another important topic for the analysis of the generalization ability 
is VC-dimension analysis.
VC-dimension of the deep neural network has been studied by, for example, \citet{bartlett1998almost,karpinski1997polynomial,goldberg1995bounding}.
However, VC-dimension is a notion independent of the input distributions.
On the other hand, the degree of freedom considered in our paper depends on the input distribution and is more data specific. 
Hence, our analysis gives tighter bound and could be practically more useful.

In our analysis, the kernel formulation of deep neural network model was the key ingredient for the analysis.
Some authors have proposed methods that utilize the representation in the internal layers as a feature map 
into some RKHS as in our formulation.
For example, \citet{cho2009kernel,mairal2014convolutional,NIPS2016_6184} have proposed novel methods 
to construct a kernel via deep learning.
%They thought the representation in the internal layers as a feature map into some RKHS as in our formulation.
Their purpose is to suggest a new hierarchical method to construct a kernel, and their proposed methods are different from the ordinary deep learning model.
On the other hand, our theoretical approach states that deep learning {\it itself} can be interpreted as a kind of kernel learning.
Moreover, their studies are not for theories but for methodologies.
Hence, our analysis and their studies are in complementary relationship, and our analysis would give theoretical support for their methods.

\section{Conclusion and future work}

%\paragraph{Conclusion}
In this paper, we proposed to use the integral form of deep neural network 
for generalization error analysis,
and based on that, we derived the generalization error bound of the empirical risk minimizer and the Bayes estimator.
The integral form enabled us to define an RKHS in each layer,
and import the theoretical techniques developed in kernel methods into the analysis of deep learning.
In particular, we defined the degree of freedom of each RKHS and 
showed that the approximation error between a finite dimensional model and the integral form can be characterized by the degree of freedom.
In addition to the approximation error, we also derived the estimation error in the finite dimensional model.
We have observed that there appears bias-variance trade-off 
depending on the size of the finite dimensional model.
%between the approximation error and the posterior 

Based on the analysis, we also derived generalization error bounds of some examples including 
the situation where the eigenvalues of the kernel function decay in a polynomial order, 
the one where the true model is finite dimensional,
and the one where there is only one internal layer.
We have observed that the analysis of the 3-layer neural network reproduces the optimal learning rate of 
the kernel method up to $\log(n)$-order.

Our theoretical frame-work offered a clear description of the bias-variance trade-off for deep learning.
This was particularly useful to determine the optimal widths of the internal layers.
We believe this study opens up a new direction of a series of theoretical analyses of deep learning.

%\paragraph{Future work}
There remain several topics to be studied.
One is characterization of the space of the deep neural network described by the integral form.
The integral form can approximate arbitrary function, but under the norm constraints as we have assumed,
it is unclear how large the function class is.
Solving this issue is interesting future work.
%Another important issue is to develop a fast learning rate for non-Bayesian deep learning methods.
%Our analysis strongly depends on properties of the Bayes estimator.
%It cannot be directly applied to the empirical risk minimization.
%To analyze that situation is also quite important future work.

\section*{Acknowledgment}

This work was partially supported by MEXT kakenhi (25730013, 25120012,
26280009, 15H01678 and 15H05707), JST-PRESTO and JST-CREST (JPMJCR1304 and JPMJCR14D7).

\appendix

\section{Proof of Theorem \ref{th:FiniteApprox}}

\label{app:FiniteApprox}

\subsection{Approximation error bound for the finite dimensional model}

%Under this assumption, we want to approximate the nonparametric model given of the integral form by an finite dimensional parametric model.
%To do so, the following proposition proved by \cite{bach2015equivalence} is quite useful.

To derive the approximation error bound,
we utilize the following proposition that was proven by 
\citet{bach2015equivalence}.

\begin{Proposition}
\label{prop:BachFiniteApprox}

For $\lambda > 0$, there exists a probability density $q_\ell(\tau)$ with respect to the measure $Q_\ell$ such that,
for any $\delta \in (0,1)$,
i.i.d. sample $v_1,\dots,v_m$ from $q_\ell$ satisfies that
$$
\sup_{\|f\|_{\calH_\ell} \leq 1} \inf_{\beta \in \Real^m : \|\beta\|_2^2 \leq \frac{4}{m}} 
\left\| f - \sum_{j=1}^m \beta_j q_\ell(v_j)^{-1/2} \eta (F_{\ell -1}(\cdot, v_j)) \right\|_{\Ltwo(P(X))}^2 \leq 4 \lambda,
$$
%and $$
%\frac{1}{n} \sum_{j=1}^m q(v_j)
%$$
with probability $1-\delta$, if 
$$
m \geq 5 N_\ell(\lambda) \log(16 N_\ell(\lambda)/\delta).
$$
\end{Proposition}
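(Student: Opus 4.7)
The plan is to follow the kernel-quadrature/random-feature argument of \citet{bach2015equivalence}: choose an importance-sampling density proportional to the ridge leverage scores, apply an operator Bernstein inequality to obtain a spectral equivalence between the empirical and population covariance operators, and convert this into the required approximation bound via kernel ridge regression.

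First, I would identify the feature map $\varphi_\ell: \calT_\ell \to \LPi$ defined by $\varphi_\ell(\tau)(x) := \eta(\Ftrue_{\ell-1}(x,\tau))$, so that $T_\ell = \int_{\calT_\ell} \varphi_\ell(\tau)\otimes \varphi_\ell(\tau)\,\dd Q_\ell(\tau)$ as a self-adjoint trace-class operator on $\LPi$. Define the leverage-score density
$$q_\ell(\tau) := \frac{\langle \varphi_\ell(\tau), (T_\ell + \lambda I)^{-1} \varphi_\ell(\tau)\rangle_{\LPi}}{N_\ell(\lambda)},$$
which is a valid density with respect to $Q_\ell$, since its $Q_\ell$-integral equals $\Tr[(T_\ell+\lambda I)^{-1}T_\ell]/N_\ell(\lambda) = 1$. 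This choice is designed so that the rescaled features $\psi_j := q_\ell(v_j)^{-1/2}\varphi_\ell(v_j)$ (with $v_j$ i.i.d.\ from $q_\ell$) are unbiased in the sense $\mathbb{E}[\psi_j\otimes\psi_j] = T_\ell$, and simultaneously satisfy the deterministic pointwise identity $\|(T_\ell+\lambda I)^{-1/2}\psi_j\|_{\LPi}^2 = N_\ell(\lambda)$.

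Next, I would apply a Bernstein-type inequality for sums of self-adjoint Hilbert-space operators to the centered average
$\frac{1}{m}\sum_{j=1}^m (T_\ell+\lambda I)^{-1/2}\bigl(\psi_j\otimes\psi_j - T_\ell\bigr)(T_\ell+\lambda I)^{-1/2}.$
The pointwise identity above bounds both the almost-sure norm of each summand and the trace of its variance by $N_\ell(\lambda)$, so Bernstein delivers the spectral equivalence $\tfrac12(T_\ell+\lambda I)\preceq \hat T_m + \lambda I \preceq \tfrac32(T_\ell+\lambda I)$ with probability at least $1-\delta$ once $m \geq 5 N_\ell(\lambda)\log(16 N_\ell(\lambda)/\delta)$, where $\hat T_m := m^{-1}\sum_j \psi_j\otimes\psi_j$.

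Finally, given the spectral equivalence, I would construct $\beta$ via kernel ridge regression: set $\hat\beta := \arg\min_\beta \{\|f - \sum_j \beta_j \psi_j/\sqrt{m}\|_{\LPi}^2 + \lambda\|\beta\|_2^2\}$ (absorbing the $1/\sqrt{m}$ scaling as needed to match the statement). Writing $f = T_\ell^{1/2} u$ with $\|u\|_{\LPi} \leq \|f\|_{\calH_\ell} \leq 1$, a standard bias-variance decomposition of ridge regression combined with the spectral equivalence bounds the squared $\LPi$-error by $4\lambda$ while simultaneously forcing $\|\hat\beta\|_2^2 \leq 4/m$. The main obstacle will be the operator Bernstein step: verifying the leverage-score identity pointwise and tracking sharp enough constants through the operator concentration inequality to land the exact threshold $5 N_\ell(\lambda)\log(16 N_\ell(\lambda)/\delta)$ stated in the proposition.
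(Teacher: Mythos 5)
The paper does not actually prove this proposition---it is imported directly from \citet{bach2015equivalence}---and your outline is a faithful reconstruction of Bach's own argument: leverage-score sampling with $q_\ell(\tau)\propto\langle\varphi_\ell(\tau),(T_\ell+\lambda I)^{-1}\varphi_\ell(\tau)\rangle$, an operator Bernstein bound for the whitened empirical covariance (using that $\|(T_\ell+\lambda I)^{-1/2}\psi_j\|^2=N_\ell(\lambda)$ almost surely), and the closed form of the ridge objective, $\inf_\beta\{\|f-\sum_j\beta_j\psi_j\|^2+m\lambda\|\beta\|^2\}=\lambda\langle f,(\hat T_m+\lambda I)^{-1}f\rangle$, to extract both the $4\lambda$ error bound and the $4/m$ norm bound simultaneously. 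The only quantitative quibble is that Bach calibrates the threshold $m\ge 5N_\ell(\lambda)\log(16N_\ell(\lambda)/\delta)$ to the weaker deviation event $\|(T_\ell+\lambda I)^{-1/2}(T_\ell-\hat T_m)(T_\ell+\lambda I)^{-1/2}\|\le 3/4$ rather than the two-sided $1/2$-equivalence you state (and the trace of the variance is $N_\ell(\lambda)^2$, not $N_\ell(\lambda)$), which is exactly what makes the constant $4$ come out.
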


By the scale invaliance of $\eta$, $\eta(a x) = a \eta(x)~(a >0)$, we have the following proposition based on 
Proposition \ref{prop:BachFiniteApprox}.
\begin{Proposition}
\label{prop:ApproxFinite}
For $\lambda >0$, and any 
$1/2 > \delta >0$,
if  
$$
m \geq 5 N_\ell(\lambda) \log(16 N_\ell(\lambda)/\delta),
$$
then there exist $v_1,\dots v_m \in \calT_\ell$,  $w_1,\dots, w_m >0$ such that 
$$
\sup_{\|f\|_{\calH_\ell} \leq R}  \inf_{\beta \in \Real^m : \|\beta\|_2^2 \leq \frac{4R^2}{m}} 
\left\| f - \sum_{j=1}^m \beta_j \eta (w_j F_{\ell -1}(\cdot, v_j)) \right\|_{\Ltwo(P(X))}^2 \leq 4 \lambda R^2,
$$
and 
$$
\frac{1}{m} \sum_{j=1}^m w_j^2 \leq (1-2\delta)^{-1}.
$$

\end{Proposition}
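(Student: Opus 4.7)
The plan is to derive Proposition \ref{prop:ApproxFinite} from Proposition \ref{prop:BachFiniteApprox} in three moves: (i) use scale invariance of $\eta$ from Assumption \ref{ass:EtaCondition} to absorb the density factor $q_\ell(v_j)^{-1/2}$ into the argument of $\eta$; (ii) rescale to upgrade the statement from $\|f\|_{\calH_\ell}\le 1$ to $\|f\|_{\calH_\ell}\le R$; and (iii) apply Markov's inequality together with a union bound to show that the random sample simultaneously satisfies the approximation property and the weight-norm bound with positive probability, which is enough to conclude existence.

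First, I would sample $v_1,\dots,v_m$ i.i.d.\ from the density $q_\ell$ provided by Proposition \ref{prop:BachFiniteApprox} and set $w_j := q_\ell(v_j)^{-1/2}$, which is almost surely a finite positive number. Since $\eta(a z)=a\eta(z)$ for all $a>0$, we have pointwise in $x$
$$
q_\ell(v_j)^{-1/2}\eta(F_{\ell-1}(x,v_j)) = \eta(w_j F_{\ell-1}(x,v_j)),
$$
so the approximating sum in Proposition \ref{prop:BachFiniteApprox} already takes the form $\sum_j \beta_j \eta(w_j F_{\ell-1}(\cdot,v_j))$ required here. Call $A$ the event, of probability at least $1-\delta$, on which Proposition \ref{prop:BachFiniteApprox} delivers its approximation bound uniformly over the unit ball of $\calH_\ell$. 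On $A$, applying the guarantee to $f/R$ for $\|f\|_{\calH_\ell}\le R$ and rescaling $\beta\mapsto R\beta$ produces coefficients with $\|\beta\|_2^2\le 4R^2/m$ and squared error at most $4\lambda R^2$, uniformly over $\{f:\|f\|_{\calH_\ell}\le R\}$.

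To control the weights, I use that $\EE[w_j^2]=\int q_\ell(v)^{-1}q_\ell(v)\,dQ_\ell(v)=1$, hence $\EE[\tfrac{1}{m}\sum_j w_j^2]=1$. Markov's inequality then yields
$$
\Pr\!\left(\tfrac{1}{m}\sum_{j=1}^m w_j^2 > (1-2\delta)^{-1}\right) \le 1-2\delta,
$$
so the event $B := \{\tfrac{1}{m}\sum_j w_j^2\le (1-2\delta)^{-1}\}$ has probability at least $2\delta$. By the standard union bound,
$$
\Pr(A\cap B) \ge \Pr(A)+\Pr(B)-1 \ge (1-\delta)+2\delta-1 = \delta>0,
$$
so $A\cap B$ is nonempty and a realization in the intersection provides the deterministic $\{v_j\}$ and $\{w_j\}$ with both of the claimed properties.

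The only delicate point is the probability bookkeeping: the assumption $\delta<1/2$ is exactly what is needed for Markov's bound on $B$ to leave enough probability mass for the union bound to be nontrivial, ensuring that the low-probability failure of Proposition \ref{prop:BachFiniteApprox} does not swallow all the mass of the weight-control event. Everything else is direct substitution once scale invariance is used to eliminate the $q_\ell(v_j)^{-1/2}$ factor.
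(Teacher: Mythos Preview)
Your proposal is correct and follows essentially the same approach as the paper's proof: sample the $v_j$ from $q_\ell$, set $w_j=q_\ell(v_j)^{-1/2}$, apply Markov's inequality to get the weight-norm event with probability at least $2\delta$, and intersect with the $1-\delta$ approximation event from Proposition~\ref{prop:BachFiniteApprox} to obtain a realization with probability at least $\delta>0$. You are slightly more explicit than the paper in spelling out the scale-invariance identity and the rescaling from the unit ball to radius $R$, but the argument is the same.
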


\begin{proof}
Notice that $\EE[\frac{1}{m} \sum_{j=1}^m q_\ell (v_j)^{-1}] = \EE[q_\ell (v)^{-1}] = 
\int_{\calT_\ell} q_\ell (v)^{-1} q_\ell(v) \dd Q_\ell(v) = \int_{\calT_\ell} 1 \dd Q_\ell(v) =  1$,
thus an i.i.d. sequence $\{v_1,\dots,v_m\}$ satisfies $\frac{1}{m} \sum_{j=1}^m q_\ell (v_j)^{-1} \leq 1/(1-2 \delta)$ with probability $2\delta$
by the Markov's inequality.
Combining this with Proposition \ref{prop:BachFiniteApprox}, the i.i.d. sequence 
$\{v_1,\dots,v_m\}$ and $w_j = q_\ell (v_j)^{-1/2}$ satisfies the condition in the statement with probability 
$1-(\delta + 1- 2\delta ) = \delta >0$. 
This ensures the existence of sequences $\{v_j\}_{j=1}^m$ and $\{w_j\}_{j=1}^m$ that satisfy the assertion.
\end{proof}

%\begin{Remark}
%The constant factor 4 can be improved as $(1-t)^{-1}$ for $t \in (0,1)$.
%\end{Remark}

From now on, we define 
$$
\czero = 4,~\cone = 4,~\cdelta = (1-2\delta)^{-1}.
$$
%Based on the proposition, we approximate $\ftrue$ given by the integral form \eqref{eq:flIntegralForm} 
%by a finite dimensional model $\fstar$ as 
%%Using $\{W^{(\ell)}\}_{\ell = 1}^L$, the approximated function $\fstar$ can be expressed as 
%%Instead of the integral form \eqref{eq:flIntegralForm}, we construct an approximated model as  
%\begin{subequations}
%\begin{align}
%& \fstar_\ell (g) = \Well{\ell} \eta(g)  + \bell{\ell}~~~~(g \in \Real^{\mell},~\ell = 2,\dots,L), \\
%& \fstar_1 (x) = \Well{1} x + \bell{1}. %, %\\
%%& \fstar_L (g) =  \Well{L} g  + \bell{L}.
%\end{align}
%\label{eq:ApproximatedForm}
%\end{subequations}
%%We define $F^*_{\ell}(x,v) = F^*_{}(\ell)$
The next theorem gives the proof of the approximation error bound in Theorem \ref{th:FiniteApprox}. % in the main text.
The $L_\infty$-norm bound of $\ftrue$ is given later in Lemma \ref{supplemma:SupnormBounds}.
Substituting $\delta \leftarrow \delta/2$ into the statement in the following 
Lemma \ref{th:ApproxErrorBound}
and letting $\conedelta = \cone c_{\delta/2}$, we obtain Theorem \ref{th:FiniteApprox}.

\begin{Lemma}[Approximation error bound of the nonparametric model]
\label{th:ApproxErrorBound}
For any $1/2 > \delta > 0$ and given $\lambda_\ell > 0$, let $m_\ell \geq 5 N_\ell(\lambda_\ell) \log(16 N_\ell(\lambda_\ell)/\delta)$. 
Then there exist
$
W^{(\ell)} \in \Real^{m_{\ell + 1} \times m_\ell}
$
and 
$
\bell{\ell} \in \Real^{m_{\ell +1}}
$
($\ell=1,\dots,L$)
where $m_{L+1} = 1$ and $m_1 = \dx$ such that
\begin{align*}
&\|W^{(\ell)}\|_{\F}^2 \leq \cone \cdelta   R^2,~~%~~(\ell = 1,\dots,L-1),~~
\|\bell{\ell}\|_2 \leq \sqrt{\cdelta} \Rb~~(\ell = 1,\dots,L-1), \\
& \|W^{(L)}\|_{\F}^2 \leq \cone R^2,~~\|\bell{L}\|_2 \leq \Rb,
\end{align*}
and 
\begin{align*}
\|\ftrue - \fstar\|_{\Ltwo(P(X))}
\leq  %D_x  
\sum_{\ell = 2}^L \sqrt{ (\cone \cdelta)^{L-\ell} \czero }  R^{L-\ell + 1} \sqrt{\lambda_\ell}.
\end{align*}

\end{Lemma}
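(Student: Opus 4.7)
The plan is to construct the finite-dimensional approximation layer by layer using Proposition \ref{prop:ApproxFinite}, and then combine the per-layer errors via a Lipschitz propagation argument along the depth of the network. The scale invariance of $\eta$ plays a crucial role in converting quadrature constants into dimension-free norm bounds.

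For each layer $\ell \geq 2$, I first apply Proposition \ref{prop:ApproxFinite} with the prescribed width $m_\ell$ and with $R$-balls in $\calH_\ell$: this is legitimate because, by Assumption \ref{ass:hbnormbounds} and the isometry \eqref{eq:ghnormeq}, for every $\tau \in \calT_{\ell+1}$ the map $x \mapsto \int \htrue_\ell(\tau, w) \eta(\Ftrue_{\ell-1}(x, w))\, dQ_\ell(w)$ has $\calH_\ell$-norm at most $R$. A single draw of nodes $v_1^{(\ell)}, \dots, v_{m_\ell}^{(\ell)} \in \calT_\ell$ with positive scalars $w_1^{(\ell)}, \dots, w_{m_\ell}^{(\ell)}$ then suffices uniformly over $\tau$, giving $\frac{1}{m_\ell}\sum_j (w_j^{(\ell)})^2 \leq \cdelta$ and coefficient vectors $\beta(\tau) \in \Real^{m_\ell}$ with $\|\beta(\tau)\|^2 \leq 4R^2/m_\ell$ and
\[
\Bigl\| \textstyle\int \htrue_\ell(\tau, w)\eta(\Ftrue_{\ell-1}(\cdot, w))\, dQ_\ell(w) - \sum_j \beta_j(\tau)\eta(w_j^{(\ell)}\Ftrue_{\ell-1}(\cdot, v_j^{(\ell)}))\Bigr\|_{\LPi}^2 \leq \czero R^2\lambda_\ell.
\]
Exploiting $\eta(ax)=a\eta(x)$ for $a>0$, I absorb the importance weights $w_j^{(\ell)}$ into the pre-activation, so that the $j$-th node of the $\ell$-th layer carries $w_j^{(\ell)}\Ftrue_{\ell-1}(x, v_j^{(\ell)})$, defined recursively through the previously constructed finite layers. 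The row of $\Well{\ell}$ indexed by $\tau$ is then essentially $\beta(\tau)$. When the next-layer discretization (a further instance of Proposition \ref{prop:ApproxFinite}) selects the $m_{\ell+1}$ output features $\tau$, the outer quadrature weights are, by a second use of scale invariance, pushed into $\Well{\ell}$; combined with the input-side control this delivers the width-independent Frobenius estimate $\|\Well{\ell}\|_\F^2 \leq \cone\cdelta R^2$, and a parallel argument on the discretized bias gives $\|\bell{\ell}\| \leq \sqrt{\cdelta}\Rb$. The output layer $\ell = L$ requires no output-side discretization, which explains the cleaner bounds $\|\Well{L}\|_\F^2 \leq \cone R^2$ and $\|\bell{L}\| \leq \Rb$.

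Given this per-layer construction, the approximation bound follows by a telescoping Lipschitz propagation. Because $\eta$ is $1$-Lipschitz and $\|\Well{\ell}\|_\F \leq \sqrt{\cone\cdelta}\, R$ controls the operator norm acting on the pre-activation error, replacing $\Ftrue_{\ell-1}$ by its finite-dimensional analog $\hat F_{\ell-1}$ in the layer-$\ell$ computation amplifies the inherited error by at most $\sqrt{\cone\cdelta}\,R$. Adding the freshly introduced layer-$\ell$ error of size $\sqrt{\czero}\,R\sqrt{\lambda_\ell}$ and unrolling the recursion from $\ell = 2$ up to $\ell = L$ yields
\[
\|\ftrue - \fstar\|_{\LPi} \leq \sum_{\ell=2}^L \sqrt{(\cone\cdelta)^{L-\ell}\czero}\; R^{L-\ell+1}\sqrt{\lambda_\ell},
\]
which is exactly the claimed bound.

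The main obstacle is making the norm bounds on $\Well{\ell}$ and $\bell{\ell}$ independent of the layer widths. A naive reading of Proposition \ref{prop:ApproxFinite} gives only $\|\beta(\tau)\|^2 \leq 4R^2/m_\ell$ per output feature, so stacking $m_{\ell+1}$ such rows would produce a Frobenius norm scaling like $\sqrt{m_{\ell+1}/m_\ell}$, and treating the $w_j^{(\ell)}$ as ordinary matrix entries would further inflate it by $\max_j w_j^{(\ell)}$. Removing both dependencies requires carefully routing the input-side quadrature weights into the pre-activations of the current layer and the output-side quadrature weights into the current layer's weight matrix via the next layer's discretization, in each case using $\eta(ax)=a\eta(x)$. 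Verifying that these rescalings are consistent across adjacent layers while simultaneously preserving the clean Lipschitz constant $\sqrt{\cone\cdelta}\,R$ used in the error propagation is the most delicate bookkeeping in the proof.
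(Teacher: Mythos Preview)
Your proposal is correct and follows essentially the same approach as the paper: per-layer quadrature via Proposition \ref{prop:ApproxFinite}, routing the input-side weights $w_j^{(\ell)}$ into the pre-activations and the output-side weights $w_i^{(\ell+1)}$ into $W^{(\ell)}$ via scale invariance, and a telescoping Lipschitz propagation through the approximate upper layers. The paper's telescoping is written as $\sum_\ell \|\fstarsub_{L}\circ\cdots\circ\fstarsub_{\ell+1}\circ \ftrue_\ell\circ\cdots - \fstarsub_{L}\circ\cdots\circ\fstarsub_{\ell}\circ \ftrue_{\ell-1}\circ\cdots\|$ with the Lipschitz constant applied in the \emph{max-norm over the discretized nodes}, which is why the uniform-in-$\tau$ bound from Proposition \ref{prop:ApproxFinite} is exactly what is needed; your sketch is compatible with this but would benefit from making explicit that the propagation norm is $\|\cdot\|_{\max}$ over nodes rather than an $\ell_2$ operator norm.
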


\begin{proof}

We construct the asserted finite dimensional network recursively from $\ell = L$ to $\ell=1$.
Let $\{v_j^{(\ell)}\}_{j=1}^{\mell}$ and $\{w_j^{(\ell)}\}_{j=1}^{\mell}$ be the sequences given in Proposition \ref{prop:ApproxFinite}.
Let $\widehat{\calT}_\ell = \{v_j^{(\ell)}\}_{j=1}^{\mell}$.
With slight abuse of notation, we identify $\fstar_\ell: \Real^{\mell} \to \Real^{m_{\ell+1}}$ to a function
$\fstar_\ell:\widehat{\calT}_\ell \to \widehat{\calT}_{\ell +1}$ in a canonical way.  
For a function $F: \Real^{\dx} \times \widehat{\calT}_\ell \to \Real$, we denote by 
$\fstar_\ell[F](x,v_i^{(\ell + 1)})$ to express $\fstar_\ell[F(x,\cdot)](v_i^{(\ell + 1)}) = \sum_{j=1}^{m_{\ell}} \Well{\ell}_{i,j} F(x,v_j^{(\ell)}) + \bell{\ell}_i$ for
$v_i^{(\ell + 1)} \in \widehat{\calT}_{\ell + 1}$.
When we write $\fstar_\ell[F]$ for $F:\Real^{\dx}  \times \calT_\ell \to \Real~((x,v) \mapsto F(x,v))$,
we deal with $F$ as a restriction of $F$ on $\Real^{\dx} \times \widehat{\calT}_\ell$.
We define the output from the $\ell$-th layer of the approximated network $\fstar$ as
$F^*_{\ell}(x,v)$ for $v \in \widehat{\calT}_\ell$ and $x \in \Real^{\dx}$.
More precisely, it is recursively defined as $F^*_{\ell}(x,v) = \fstar_\ell[F^*_{\ell-1}](x,v)$.

We use an analogous notation for other networks such as $\ftrue_\ell$. 
That is, $\Ftrue_\ell(x,v) = (\ftrue_\ell \circ \dots \circ \ftrue_1(x))(v)$ for $v\in \calT_\ell$ and $x \in \Real^{\dx}$, 
and $\Ftrue_\ell(x,v) = \ftrue_\ell[\Ftrue_{\ell-1}](x,v)$.

\noindent {\bf Step 1} (the last layer, $\ell = L$).

We consider the following approximation of the $L$-th layer (the last layer):
Remember that $m_{L+1} =1$ and thus the output from the $L$-th layer is just one dimensional.
We denote by  $\calT_{L+1} = \{1\}$ which is the index set of the output (which is just a singleton consisting of an element 1).
As a candidate of a good approximation to the true $L$-th layer, define 
\begin{align}
\fstarsub_{L}[F_{L-1}](x,1) = \sum_{j=1}^{m_L} \sqrt{m_L} \beta^{(L)}_{j} \eta \left(\frac{1}{\sqrt{m_L}} w_j^{(L)} F_{L-1}(x, v_j^{(L)}) \right) + b_L
\label{eq:fstarLFiniteApproximate}
\end{align}
by $\beta^{(L)} \in \Real^{m_{L}}$ and $w^{(L)} \in \Real^{m_L}$ satisfying 
$\|\beta^{(L)}\|_{2}^2 \leq \frac{1}{m_L} \cone R^2$ and $\|w^{(L)}\|_2^2 \leq m_L \cdelta$.
%$\|W^{(L)}_{1,:}\|_{2}^2 \leq \frac{1}{m_L} \cone R^2$.
Here, define that 
$$
W^{(L)}_{1,:} = \sqrt{m_L} {\beta^{(L)}}^\top,~~\bell{L} = (\btrue_L(1)). %(b_L(v_1^{(L)}),\dots,b_L(v_{m_L}^{(L)}))^\top.
$$
Note that the model \eqref{eq:fstarLFiniteApproximate} can be rewritten as 
\begin{align}
\fstarsub_{L}[F_{L-1}](x,1) = \sum_{j=1}^{m_L} W^{(L)}_{1,j} \eta (\sqrt{m_L}^{-1} w_j^{(L)} F_{L-1}(x, v_j^{(L)})) + \bell{L}_1.
\notag 
\end{align}
Because of  Proposition \ref{prop:ApproxFinite} and Assumption \ref{ass:hbnormbounds}, 
the norms of the weight $\Well{L}$ and the bias $\bell{L}$ are bounded as 
\begin{align}
\|W^{(L)}\|_F = \|W^{(L)}_{1,:}\|_2 \leq \sqrt{\cone} R,~~ %\sqrt{\frac{\cone}{m_L} } R.
\|\bell{L}\|_2 = |b_L| \leq \Rb.
\label{eq:WbLbounds}
\end{align}

By the Cauchy-Schwartz inequality and the Lipschitz continuity of $\eta$, we have that 
%\begin{align*}
%| \fstar_{L}(F_{L-1})(x) |
%& \leq \|W^{(L)}_{1,:}\|_2 \|(\eta (w_j^{(L)} F_{L-1}(x, v_j^{(L)})))_{j=1}^{m_L} \|_2 \\
%& \leq \|W^{(L)}_{1,:}\|_2 \| (w_j^{(L)} F_{L-1}(x, v_j^{(L)}))_{j=1}^{m_L} \|_{2}.
%\end{align*}
\begin{align*}
& | \fstarsub_{L}[F_{L-1}](x,1) - \fstarsub_{L}[F'_{L-1}](x,1) | \\
& \leq 
|\sum_{j=1}^{m_L} W^{(L)}_{1,j} (\eta ( \sqrt{m_L}^{-1}w_j^{(L)} F_{L-1}(x, v_j^{(L)})) - \eta (\sqrt{m_L}^{-1} w_j^{(L)} F'_{L-1}(x, v_j^{(L)})))| \\
& \leq \|W^{(L)}_{1,:}\|_2 \sqrt{m_L}^{-1} \|(w^{(L)}_j (F_{L-1}(x, v_j^{(L)})   - F'_{L-1}(x, v_j^{(L)})))_{j=1}^{m_L}\|_{2} \\
& \leq \|W^{(L)}_{1,:}\|_2 \sqrt{m_L}^{-1} \|w^{(L)}\|_2 
\| (F_{L-1}(x, v_j^{(L)})   - F'_{L-1}(x, v_j^{(L)}))_{j=1}^{m_L}\|_{\max} \\
&
\leq
\sqrt{\cone R^2} \sqrt{\cdelta m_L/m_L} \| (F_{L-1}(x, v_j^{(L)})   - F'_{L-1}(x, v_j^{(L)}))_{j=1}^{m_L}\|_{\max} \\
& =
\sqrt{\cone \cdelta} R  \| (F_{L-1}(x,v_j^{(L)})   - F'_{L-1}(x,v_j^{(L)}))_{j=1}^{m_L}\|_{\max},
\end{align*}
for $F_{L-1},F'_{L-1}: \widehat{\calT}_{L} \times \Real^{\dx} \to \Real$.
Moreover, Proposition \ref{prop:ApproxFinite} ensures that
$\beta^{(L)}$ and $w^{(L)}$ can be taken so that 
\begin{align*}
\| \fstarsub_{L}[\Ftrue_{L-1}](\cdot,1) - \ftrue_{L}[\Ftrue_{L-1}](\cdot,1) \|_{\Ltwo(P(X))}^2
\leq \czero \lambda_L R^2.
%& \leq \|W^{(L)}_{1,:} (\eta (w_j^{(L)} F_{L-1}(x, v_j^{(L)})) - \eta (w_j^{(L)} F^*_{L-1}(x, v_j^{(L)})))_{j=1}^{m_L} \|_2 \\
%& \leq \|W^{(L)}_{1,:}\|_2 \|w^{(L)}\|_2 
%\| (F_{L-1}(x, v_j^{(L)})   - F^*_{L-1}(x, v_j^{(L)}))_{j=1}^{m_L}\|_{\max} \\
%&
%\leq
%\frac{\cone}{m_L} R^2 m_L \| (F_{L-1}(x, v_j^{(L)})   - F^*_{L-1}(x, v_j^{(L)}))_{j=1}^{m_L}\|_{\max} \\
%& =
%\cone R^2  \| (F_{L-1}(x, v_j^{(L)})   - F^*_{L-1}(x, v_j^{(L)}))_{j=1}^{m_L}\|_{\max}
\end{align*}
Hereinafter, we fix $\beta^{(L)}$ and $w^{(L)}$ so that this inequality and the norm bound \eqref{eq:WbLbounds} are satisfied.

\noindent {\bf Step 2} (internal layers for $\ell = 2,\dots,L-1$).
As for the $\ell$-th internal layer, 
we consider the following approximation:
$$
\fstarsub_\ell[g](v_i^{(\ell+1)}) = \sum_{j=1}^{\mell} \sqrt{m_\ell} \beta^{(\ell)}_{i,j} \eta (\sqrt{m_\ell}^{-1} w_j^{(\ell)} g(v_j^{(\ell)})) + \btrue_\ell(v_i^{(\ell+1)}),
$$
for $g: \widehat{\calT}_{\ell} \to \Real$
with $\beta^{(\ell)} \in \Real^{m_{\ell+1} \times \mell}$ and $w^{(\ell)} \in \Real^{\mell}$ 
satisfying 
$\|\beta^{(\ell)}_{j,:}\|_{2}^2 \leq \frac{1}{m_\ell}  \cone R^2~(\forall j =1,\dots,m_{\ell+1})$ 
and $\|w^{(\ell)}\|_2^2 \leq m_\ell \cdelta$.
Then, the Lipschitz continuity of $\fstarsub_\ell$ can be shown as %we have the following bound %uniformly, for all $\{v_j^{\ell + 1}\}_{j=1}^{m_{\ell + 1}}$,
\begin{align*}
& | \fstarsub_{\ell}[F_{\ell-1}](x,v_i^{(\ell + 1)}) - \fstarsub_{\ell}[F'_{\ell-1}](x,v_i^{(\ell + 1)}) | \\
& \leq 
\left| 
\sum_{j=1}^{m_{\ell}} \sqrt{m_\ell} \beta^{(\ell)}_{i,j} (\eta (\sqrt{m_\ell}^{-1} w_j^{(\ell)} F_{\ell-1}(x, v_j^{(\ell)})) - \eta (\sqrt{m_\ell}^{-1} w_j^{(\ell)} F'_{\ell-1}(x, v_j^{(L)}))) 
\right| \\
& \leq \|\beta^{(\ell)}_{i,:}\|_2 \|w^{(\ell)}\|_2 
\| (F_{\ell-1}(x, v_j^{(\ell)})   - F'_{\ell-1}(x, v_j^{(\ell)}))_{j=1}^{m_\ell}\|_{\max} \\
&
\leq
\sqrt{\frac{\cone}{m_\ell}} R \sqrt{\cdelta m_\ell} \| (F_{\ell-1}(x, v_j^{(\ell)})   - F'_{\ell-1}(x, v_j^{(\ell)}))_{j=1}^{m_\ell}\|_{\max} \\
& =
\sqrt{\cone \cdelta} R  \| (F_{\ell-1}(x, v_j^{(\ell)})   - F'_{\ell-1}(x, v_j^{(\ell)}))_{j=1}^{m_L}\|_{\max},
\end{align*}
for any $v_i^{(\ell + 1)} \in \widehat{\calT}_{(\ell + 1)}$.
Proposition \ref{prop:ApproxFinite} asserts that 
there exit $\beta^{(\ell)}$ and $w^{(\ell)}$
that give an upper bound of the approximation error of the $\ell$-th layer as
\begin{align*}
\max_{j=1,\dots,\mell} \| \fstarsub_{\ell}[\Ftrue_{\ell-1}](\cdot,v_j^{\ell+1})  - \ftrue_{\ell}[\Ftrue_{\ell-1}](\cdot,v_j^{\ell+1}) \|_{\Ltwo(P(X))}^2
\leq \czero \lambda_\ell R^2.
%& \leq \|W^{(L)}_{1,:} (\eta (w_j^{(L)} F_{L-1}(x, v_j^{(L)})) - \eta (w_j^{(L)} F^*_{L-1}(x, v_j^{(L)})))_{j=1}^{m_L} \|_2 \\
%& \leq \|W^{(L)}_{1,:}\|_2 \|w^{(L)}\|_2 
%\| (F_{L-1}(x, v_j^{(L)})   - F^*_{L-1}(x, v_j^{(L)}))_{j=1}^{m_L}\|_{\max} \\
%&
%\leq
%\frac{\cone}{m_L} R^2 m_L \| (F_{L-1}(x, v_j^{(L)})   - F^*_{L-1}(x, v_j^{(L)}))_{j=1}^{m_L}\|_{\max} \\
%& =
%\cone R^2  \| (F_{L-1}(x, v_j^{(L)})   - F^*_{L-1}(x, v_j^{(L)}))_{j=1}^{m_L}\|_{\max}
\end{align*}
Finally, let 
$$
W^{(\ell)}_{ij} = \sqrt{\frac{m_{\ell}}{m_{\ell+1}}} \beta^{(\ell)}_{ij} w_i^{(\ell + 1)},
~~
\bell{\ell} =  \frac{1}{\sqrt{m_{\ell + 1}}}(w_1^{(\ell + 1)} \btrue_\ell(v_1^{(\ell+1)}),\dots,w_{m_{\ell+1}}^{(\ell + 1)} \btrue_\ell(v_{m_{\ell + 1}}^{(\ell+1)}))^\top,
$$ 
then, by Assumption \ref{ass:hbnormbounds} and Proposition \ref{prop:ApproxFinite}, the norms of these quantities can be bounded as 
\begin{align*}
\|W^{(\ell)}\|_{\F}^2 &  = \frac{m_{\ell}}{m_{\ell+1}} \sum_{i=1}^{m_{\ell + 1}} \sum_{j=1}^{m_{\ell}}\beta^{(\ell)^2}_{ij} w_i^{(\ell + 1) 2} \\
&  \leq \frac{m_{\ell}}{m_{\ell+1}} \sum_{i=1}^{m_{\ell + 1}} w_i^{(\ell + 1) 2}  \frac{\cone R^2}{\mell} 
\leq \cone \cdelta %\frac{m_{\ell + 1}}{\mell} 
R^2,
\end{align*}
and 
\begin{align*}
\|\bell{\ell}\|_2^2 \leq \frac{1}{m_{\ell + 1}} \sum_{j=1}^{m_{\ell + 1}} {w^{(\ell +1)}}^2    \Rb^2 \leq \cdelta \Rb^2.
\end{align*}

\noindent {\bf Step 3} (the first layer, $\ell = 1$).

For the first layer, let 
$$
\fstarsub(x,v_i^{(2)}) = \sum_{j=1}^{\dx} \htrue_1(v_i^{(2)},j) Q_1(j) x_j + \btrue_1(v_i^{(2)})
$$
for $v_i^{(2)} \in \widehat{\calT}_2$.
By the definition of $\ftrue$, it holds that 
$$
\fstarsub(x,v_i^{(2)}) = \ftrue(x,v_i^{(2)}).
$$
%Then, we have 
%\begin{align*}
%|\fstarsub(x,v_i^{(2)})|
%\leq 
%\end{align*}
%Moreover, 

Let $W^{(1)} =  \frac{1}{\sqrt{m_2}} (Q_1(j) w^{(2)}_i \htrue_1(v_i^{(2)},j))_{i,j} \in \Real^{m_2 \times \dx}$ and $
b^{(1)} = \frac{1}{\sqrt{m_2}}  (w_1^{(2)} \btrue_1(1),\dots,w_{m_2}^{(2)} \btrue_1(m_2))^\top \in \Real^{m_2}$. %we have
%$$
%D_x := \max_{j=1,\dots,m_2} \|W^{(1)}_{j,:}X + b_j^{(1)} \|_{\Ltwo(P(X))}.
%$$
Then, by Assumption \ref{ass:hbnormbounds} and Proposition \ref{prop:ApproxFinite}, 
it holds that 
\begin{align*}
\|W^{(1)} \|_{\F}^2= & 
\sum_{i=1}^{m_2} \sum_{j=1}^{\dx}  \frac{1}{m_2} {w_i^{(2)}}^2 \htrue_1(v_i^{(2)},j)^2 Q_1(j)^{2}   \\
\leq & 
\left(\sum_{i=1}^{m_2} \frac{1}{m_2} {w_i^{(2)}}^2 \right)
\max_{1\leq i \leq m_2}
\left(\sum_{j=1}^{\dx}  \htrue_1(v_i^{(2)},j)^2 Q_1(j)^{2} \right)  \\
\leq & 
\cdelta
\max_{1\leq i \leq m_2}
\left(\sum_{j=1}^{\dx}  \htrue_1(v_i^{(2)},j)^2 Q_1(j) \right) 
\leq  
\cdelta R^2,
%
%\sqrt{\sum_{j=1}^{\dx} Q_1(j) x_j^2}\sqrt{\sum_{j=1}^{\dx} Q_1(j)  h_1(v_i^{(2)},j)^2} 
%\leq  \|x\|_{\Ltwo(Q_1)}  \| h_1(v_i^{(2)},:) \|_{\Ltwo(Q_1)} 
%\leq  R  \|x\|_{\Ltwo(Q_1)}. 
\end{align*}
and 
%\begin{align*}
%& \left|\sum_{j=1}^{\dx} [Q_1(j)^{1/2}  h_1(v_i^{(2)},j) Q_1(j)^{1/2} x_j \right| \\
%\leq & \sqrt{\sum_{j=1}^{\dx} Q_1(j) x_j^2}\sqrt{\sum_{j=1}^{\dx} Q_1(j)  h_1(v_i^{(2)},j)^2} 
%\leq  \|x\|_{\Ltwo(Q_1)}  \| h_1(v_i^{(2)},:) \|_{\Ltwo(Q_1)} 
%\leq  R  \|x\|_{\Ltwo(Q_1)}. 
%\end{align*}
%Taking the expectation with respect to $x$, the RHS is bounded by $D_x R$.
%Moreover, the norms of $\Well{1}$ and $\bell{1}$ are bounded as 
\begin{align*}
%\|\Well{1}\|_{\F}^2 \leq \cdelta R^2 \leq \cone \cdelta R^2,~~
\|\bell{1}\|_2^2 \leq \frac{1}{m_1}\sum_{i=1}^{m_2} {w_i^{(2)}}^2 \Rb^2 \leq \cdelta \Rb^2.
\end{align*}

%$\fstar_\ell(g)(\tau_i) = \sum_{j=1}^{\mell} W^{(\ell)}_{i,j} \eta (g(\tau_j)) + b_\ell(\tau_i)$ 
%$$
%\|\fstar_\ell(F_{\ell -1})(\cdot,\tau_i) -  \ftrue_\ell(F_{\ell -1})(\cdot,\tau_i)\|_{\Ltwo(P(X))}^2 \leq \czero R^2 \lambda
%$$
%and
%$$
%\|W^{(\ell)}\|_{\max}^2 \leq \|W^{(\ell)}\|_{\F}^2 =\sum_{i=1}^{\mell}  \| W^{(\ell)}_{i,:}\|_2^2 \leq \mell \frac{\cone R^2}{\mell} = \cone R^2.
%$$
%
%\begin{align*}
%\int \| F_\ell(x,\cdot) \|_{\Ltwo(Q_\ell)}^2 \dd P(x) & \leq 
%\int \| h_\ell(x,\cdot) \|_{\Ltwo(Q_\ell)}^2 \| F_{\ell -1}(x,\cdot) \|_{\Ltwo(Q_\ell-1)}^2  \dd P(x) \\
%& \leq R^2 
%\int  \| F_{\ell -1}(x,\cdot) \|_{\Ltwo(Q_\ell-1)}^2  \dd P(x).
%\end{align*}

\noindent {\bf Step 4}.

Finally, we combine the results we have obtained above. 
Note that
\begin{align*}
& \| \ftrue_L \circ \ftrue_{L-1} \circ \cdots \circ \ftrue_{1} 
- \fstarsub_L \circ \fstarsub_{L-1} \circ \cdots \circ \fstarsub_{1} \|_{\Ltwo(P(X))} \\
=
& \| \ftrue_L \circ \ftrue_{L-1} \circ \cdots \circ \ftrue_{1} 
- \fstarsub_L \circ \ftrue_{L-1} \circ \cdots \circ \ftrue_{1}   \\ 
& 
~~~~~~~~~~~~~~~~~\vdots \\
&
+ \fstarsub_L \circ \dots \circ
\fstarsub_{\ell+1} \circ \ftrue_{\ell} \circ \ftrue_{\ell-1} \circ \dots \circ \ftrue_1 
-
\fstarsub_L \circ \dots \circ
\fstarsub_{\ell+1} \circ \fstarsub_{\ell} \circ \ftrue_{\ell-1} \circ \dots \circ \ftrue_1 
\\
& 
~~~~~~~~~~~~~~~~~\vdots \\
&
+ \fstarsub_L \circ  \cdots \fstarsub_{2}   \circ \ftrue_{1} 
-
\fstarsub_L \circ  \cdots \fstarsub_{2}   \circ \fstarsub_{1} 
\|_{\Ltwo(P(X))} \\
\leq & \sum_{\ell=1}^L
\|
\fstarsub_L \circ \dots \circ
\fstarsub_{\ell+1} \circ \ftrue_{\ell} \circ \ftrue_{\ell-1} \circ \dots \circ \ftrue_1 
-
\fstarsub_L \circ \dots \circ
\fstarsub_{\ell+1} \circ \fstarsub_{\ell} \circ \ftrue_{\ell-1} \circ \dots \circ \ftrue_1 
\|_{\Ltwo(P(X))}.
\end{align*}
Then combining the argument given above, we have
\begin{align*}
&
\|
\fstarsub_L \circ \dots \circ
\fstarsub_{\ell+1} \circ \ftrue_{\ell} \circ \ftrue_{\ell-1} \circ \dots \circ \ftrue_1 
-
\fstarsub_L \circ \dots \circ
\fstarsub_{\ell+1} \circ \fstarsub_{\ell} \circ \ftrue_{\ell-1} \circ \dots \circ \ftrue_1 
\|_{\Ltwo(P(X))} \\
\leq
&
(\sqrt{\cone \cdelta} R)^{L-\ell } (\sqrt{\czero \lambda_\ell} R) %R^{\ell-1} %RD_x
= \sqrt{ (\cone \cdelta)^{L-\ell} \czero } R^{L-\ell + 1}    \sqrt{\lambda_\ell}, %D_x
\end{align*}
for $\ell = 2,\dots,L$. And the right hand side is 0 for $\ell=1$.
This yields that 
\begin{align*}
\|\ftrue - \fstarsub\|_{\Ltwo(P(X))}
\leq  %D_x  
\sum_{\ell = 2}^L R^{L-\ell  + 1} \sqrt{ (\cone \cdelta)^{L-\ell} \czero  }  \sqrt{\lambda_\ell}.
\end{align*}
By substituting $W^{(\ell)}$ and $\bell{\ell}$ for $\ell = 1,\dots,L$ defined above into the 
definition of $\fstar$, then it is easy to see that 
$$
\fstar = \fstarsub
$$
as a function. Then, we obtain the assertion.

\end{proof}

\subsection{Bounding the $L_\infty$-norm}

\label{app:LinftyNormBound}

The next lemma shows the $L_\infty$-norm of  the true function $\ftrue$ and that of 
$f \in \calF$.
This gives 
the $L_\infty$-norm bound of every $f\in \calF$ in Lemma \ref{eq:fbounded}
and thus that of $\fstar$ in Theorem \ref{th:FiniteApprox}  because $\fstar \in \calF$.

\begin{Lemma}
\label{supplemma:SupnormBounds}
Under Assumptions \ref{ass:hbnormbounds}, \ref{ass:EtaCondition} and \ref{ass:xbounds}, the $L_\infty$-norms of $\ftrue$ and 
that of $f \in \calF$
are bounded as 
\begin{align*}
\|\ftrue\|_\infty 
& \leq R^{L} D_x + \sum_{\ell = 1}^L R^{L-\ell} \Rb,\\
\|f\|_\infty
& 
\leq (\sqrt{\cone \cdelta})^{L} R^{L} D_x + \sum_{\ell = 1}^L (\sqrt{\cone \cdelta} R)^{L-\ell} \Rbarb.
\end{align*}

\end{Lemma}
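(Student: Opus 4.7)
The plan is to prove both inequalities by induction on the layer depth, propagating a layerwise $L_\infty$ bound on the intermediate outputs. The key algebraic facts are (i) Cauchy--Schwarz on the integral/inner-product defining each layer, (ii) the observation that $\eta(0)=0$ (obtained by letting $a\to 0^+$ in the scale-invariance identity $\eta(ax)=a\eta(x)$; Lipschitz continuity in Assumption \ref{ass:EtaCondition} ensures the limit exists), and (iii) the consequence that $\|\eta(z)\|\le\|z\|$ coordinatewise or in any weighted $L_2$ norm.

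For $\ftrue$, set $M_\ell:=\sup_{x\in\mathrm{supp}(P(X))}\|\Ftrue_\ell(x,\cdot)\|_{L_\infty(\calT_{\ell+1})}$. In the base case $\ell=1$, apply Cauchy--Schwarz in $L_2(Q_1)$ to the discrete representation $\Ftrue_1(x,\tau)=\sum_j \htrue_1(\tau,j)x_j Q_1(j)+\btrue_1(\tau)$, which gives $|\Ftrue_1(x,\tau)|\le \|\htrue_1(\tau,\cdot)\|_{L_2(Q_1)}\|x\|_{L_2(Q_1)}+|\btrue_1(\tau)|\le R D_x+\Rb$, using $\|x\|_{L_2(Q_1)}^2=\sum_j x_j^2 Q_1(j)\le D_x^2$ because $Q_1$ is a probability measure and $|x_j|\le D_x$ by Assumption \ref{ass:xbounds}. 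For the inductive step, Cauchy--Schwarz in $L_2(Q_\ell)$ applied to the integral form \eqref{eq:flIntegralForm} together with Assumption \ref{ass:hbnormbounds} yields $|\Ftrue_\ell(x,\tau)|\le R\|\eta\circ\Ftrue_{\ell-1}(x,\cdot)\|_{L_2(Q_\ell)}+\Rb$; using $|\eta(u)|\le |u|$ and $\|\cdot\|_{L_2(Q_\ell)}\le \|\cdot\|_{L_\infty}$ (since $Q_\ell$ is a probability measure) one obtains $M_\ell\le R M_{\ell-1}+\Rb$. Unrolling the recursion gives $M_L\le R^L D_x+\sum_{\ell=1}^L R^{L-\ell}\Rb$, which is the claimed bound.

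For $f\in\calF$, the argument is parallel, with the Euclidean norm replacing the $L_2(Q_\ell)$ norm and the Frobenius bound $\|Wz\|_2\le \|W\|_F\|z\|_2$ replacing Cauchy--Schwarz on the integrals. Define $N_\ell(x):=\|g_\ell(x)\|_2$ where $g_\ell:=f_\ell\circ\cdots\circ f_1$. In the base case $N_1(x)\le \|W^{(1)}\|_F\|x\|_2+\|b^{(1)}\|_2\le \Rbar D_x+\Rbarb$ (absorbing the dimension factor into $D_x$, consistent with the usage in Theorem \ref{th:FiniteApprox}), and in the inductive step $N_\ell(x)\le \Rbar\|\eta(g_{\ell-1}(x))\|_2+\Rbarb\le \Rbar N_{\ell-1}(x)+\Rbarb$ by the same Lipschitz/$\eta(0)=0$ argument. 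The recursion unrolls to $\|f\|_\infty=N_L\le \Rbar^L D_x+\sum_{\ell=1}^L \Rbar^{L-\ell}\Rbarb$, which is the stated inequality after substituting $\Rbar=\sqrt{\cone\cdelta}R$.

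The main obstacle is not technical but conceptual: ensuring that the activation neither inflates norms nor introduces constants depending on layer widths $m_\ell$. Both are handled by extracting $\eta(0)=0$ from scale invariance plus continuity, which converts 1-Lipschitz continuity into the unconditional coordinatewise contraction $|\eta(u)|\le |u|$. A secondary point requiring care is the discrepancy in the first-layer base cases: for $\ftrue$, the weighting $Q_1(j)$ is retained and Cauchy--Schwarz is performed in the weighted inner product so that $\sum_j Q_1(j)\le 1$ eliminates the dependence on $\dx$; for $f\in\calF$, the $Q_1$-weighting has been absorbed into $W^{(1)}$ during the construction in Lemma \ref{th:ApproxErrorBound}, so one simply applies the Frobenius--Euclidean inequality to $\|W^{(1)}x+b^{(1)}\|_2$.
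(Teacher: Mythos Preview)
Your proposal is correct and follows essentially the same approach as the paper: layerwise induction with Cauchy--Schwarz in $L_2(Q_\ell)$ for $\ftrue$ and the Frobenius--Euclidean inequality for $f\in\calF$, unrolling the recursion $M_\ell\le R M_{\ell-1}+\Rb$ (respectively $N_\ell\le\Rbar N_{\ell-1}+\Rbarb$). You are in fact slightly more explicit than the paper in deriving $|\eta(u)|\le|u|$ from scale invariance ($\eta(0)=0$) plus $1$-Lipschitz continuity, a step the paper uses without comment; and your remark about the $\sqrt{\dx}$ factor in the $f\in\calF$ base case flags a looseness that the paper's own proof also glosses over.
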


\begin{proof}
%\begin{align*}
%|\ftrue_L \circ \ftrue_{L-1} \circ \cdots \circ \ftrue_1(x)|
%\end{align*}

Suppose that 
$$
\|\Ftrue_{\ell -1}(x,\cdot) \|_{\Ltwo(Q_{\ell})} \leq G.
$$
%Then we bound 
%$\|F_{\ell}(x,\cdot) \|_{\Ltwo(Q_\ell)} \leq R G.$
Then, $\Ftrue_{\ell}$ can be bounded inductively:  
for all $\tau \in \calT_{\ell + 1}$
\begin{align*}
|\Ftrue_{\ell}(x,\tau) |
& 
= \left| \int_{\calT_\ell} \htrueell{\ell}(\tau,w)
\eta( \Ftrue_{\ell-1}(x,w)) \dd Q_\ell(w) + \btrue_\ell(\tau) \right| \\
&
\leq \| \htrueell{\ell}(\tau,\cdot) \|_{\Ltwo(Q_\ell)}
\| \Ftrue_{\ell-1}(x,\cdot) \|_{\Ltwo(Q_\ell)} + |\btrue_\ell(\tau)| \\
&
\leq  R G + \Rb, % |\bell{\ell}(\tau)| \\
\end{align*}
by Assumption \ref{ass:hbnormbounds}.
Similarly, as for $\ell = 1$, it holds that,
for all $\tau \in \calT_2$ and $x \in \Real^{\dx}$,
\begin{align*}
|\ftrue_1(x,\tau)| &
= |\sum_{i=1}^{\dx} \htrueell{1}(\tau,i) x_i Q_1(i) + \btrue_{1}(\tau)| \\
& \leq |\sum_{i=1}^{\dx} \htrueell{1}(\tau,i) x_i Q_1(i)| + |\btrue_{1}(\tau)| \\
& \leq \|\htrueell{1}(\tau,\cdot)\|_{\Ltwo(Q_1)} \|x\|_{\Ltwo(Q_1)} + \Rb \\
& \leq R D_x + \Rb.
\end{align*}
Applying the same argument recursively, we have 
$$
\|\ftrue\|_\infty \leq R^{L} D_x + \sum_{\ell = 1}^L R^{L-\ell} \Rb.
$$

We can bound the $L_\infty$-norm of any $f \in \calF$ through a similar argument.
Note that
%$\bell{\ell} \in \Real^{m_{\ell + 1}}$ and 
%$\|\bell{\ell}\|_\infty \leq \Rb$ by its construction, 
%and thus it holds that 
$\Well{\ell}$ satisfies $\|\Well{\ell}\|_{\F} \leq \sqrt{\cone \cdelta } R$ for $\ell = 1,\dots,L-1$,
$\Well{L}$ satisfies $\|\Well{L}\|_{\F} \leq \sqrt{\cone} R$, and 
$\bell{\ell}$ satisfies $\|\bell{\ell}\|_2 \leq \sqrt{\cdelta} \Rb $ by its construction.
Therefore, though a similar argument to the bound for $\ftrue$, we have that
\begin{align*}
\|f\|_\infty 
& \leq \sqrt{\cone} R 
\left[ \prod_{\ell=2}^{L-1} \left(\sqrt{\cone \cdelta } R\right)\right]
\sqrt{\cdelta }R D_x \\
& +  
\left(
\sum_{\ell=1}^{L-2}
\sqrt{\cone} R \left[ \prod_{\ell'=\ell+1}^{L-1} \left(\sqrt{\cone \cdelta}  R\right)  \right]
\sqrt{\cdelta} \Rb 
+ \sqrt{\cone} R \sqrt{\cdelta} \Rb +  \sqrt{\cdelta} \Rb
\right)
\\
& 
\leq 
(\cone \cdelta)^{L/2} R^{L} D_x + \sum_{\ell = 1}^L (\sqrt{\cone \cdelta} R)^{L-\ell} \Rbarb.
\end{align*}
\end{proof}

%\newcommand{\Rhatinf}{\hat{R}_{\infty}}

%\newcommand{\deltanone}{\hat{\delta}_{1,n}}
%\newcommand{\deltantwo}{\hat{\delta}_{2,n}}
%\newcommand{\tilepsilonn}{\epsilon_n} %\tilde{\epsilon}_n}

\begin{comment}

$$
\deltanone =\sum_{\ell = 2}^L \sqrt{ (\cone \cdelta)^{L-\ell-1} \czero }  R^{L-\ell} \sqrt{\lambda_\ell}.
$$

$$
\deltantwo = \text{variance}.
$$
$$
\epsilonn = \deltanone + \deltantwo
$$

%We reparameterize the model as 
%$$
%f_\ell(x) = \Well{\ell} \eta (x) + (\prod_{\ell'=1}^{\ell -1} \|\Well{\ell'}\|_{\infty}) \btilell{\ell}.
%$$
Prior distribution:
Let $\calB_d(C)$ be the ball in the Euclidean space $\Real^d$ with radius $C > 0$ $(\calB_d(R) = \{x \in \Real^d \mid \|x\| \leq C\})$,
and $U(\calB_d(C) )$ be the uniform distribution on the ball $\calB_d(C)$ with radius $C>0$.
$$\Rbar = \sqrt{\cone \cdelta}R ,~~\Rbarb = \Rb,$$
and we put the prior distribution as 
$$
\Well{\ell} \sim U(  \calB_{m_{\ell + 1} \times m_{\ell}}(\Rbar)),~~
\bell{\ell} \sim U(\calB_{m_{\ell + 1}}(\Rbarb)).
%N(0, \sigmap{\ell} \Id),~~\btilell{\ell} \sim N(0, \sigmapb{\ell} ).
$$
\end{comment}

\section{Bounding the posterior contraction rate}
\label{sec:PosteriorContractionProof}

In this section, we prove Theorem \ref{th:PosteriorContraction}.
The proof is divided into two parts: 
posterior contraction rate with respect to the in-sample error 
(i.e., the empirical $L_2$-norm $\|f\|_n = \sqrt{\sum_{i=1}^n f(x_i)^2 /n }$) and that with respect to the out-of-sample error 
(i.e., the population $L_2$-norm $\|f\|_{\LPi} = \sqrt{\int f(X)^2 \dd P(X)}$).
%The required statement is about the out-of-sample error. Thus the proof is completed in Section \ref{suppsec:OutOfPredError}.

Here, let 
$$
\epsilonn = \deltanone + \sigma \deltantwo,~~~~
\tilepsilonn = \deltanone + \deltantwo.
$$

\subsection{In-sample error}
\label{sec:AppInsampleError}

Here we show the in-sample error bound.
Let  $X_{n} = (x_1,\dots,x_n)$, $Y_{n} = (y_1,\dots,y_n)$ and $D_n = (X_n,Y_n)$.
For given $X_n$, the probability distribution of $Y_{n}$ associated with a function $f$ (i.e., $y_i = f(x_i) + \epsilon_i$)
is denoted by $P_{n,f}$.
The expectation of a function $h$ of $Y_n$ with respect to $P_{n,f}$ is denoted by $P_{n,f}(h)$.
The density function of $P_{n,f}$ with respect to $Y_n$ is denoted by $p_{n,f}$.

For $\rtil \geq 1$, let $\calA_{\rtil}$ be the event such that 
\begin{align*}
& \int \frac{p_{n,f}(Y_n)}{p_{n,\ftrue}(Y_n)} \Pi(\dd f) \geq \exp(- n \tilepsilonn^2 \rtil^2/\sigma^2) 
\Pi(f : \|f - \fstar \|_{\infty} \leq \deltantwo \rtil ).
\end{align*}
The probability of this event is bounded by Lemma \ref{lemm:PriorMass}.

%Let 
%$$
%\Rhatinf = 
%\Rbar^L D_x + \sum_{\ell = 1}^L \Rbar^{L - \ell} \Rbarb.
%%(\sqrt{\cone \cdelta} )^{L} R^{L} D_x + \sum_{\ell = 1}^L (\sqrt{\cone \cdelta} R)^{L-\ell} \Rb.
%$$

%For a set $\calF$ of functions and a test function $\phi_n$ defined later, %he event $\calA_{\rtil}$ and a test $\phi_n$,
%(all of which are dependent on a positive real number $\rtil > 0$ and specified later), 

Using a test function $\phi_n$ defined later
(here, a test function is a measurable function of $D_n$ that takes its value in $[0,1]$), we decompose the expected posterior mass as 
\begin{align}
% &\EE\left[\int \|A-A^*\|_n^2 \Pi (\dd A | Y_{1:n} ) \right] \notag \\
%= &
%\EE\left[32 \epsilon^2 \int_{r >0} r \Pi(\|A - A^*\|_n \geq 4 \epsilon r  | Y_{1:n} ) \dd r \right] \notag \\
& \EE\left[ \Pi(\|f - \ftrue \|_n \geq \sqrt{2} \epsilonn r  | D_{n} )  \right] \notag \\
\leq
&
%\big\{
 \EE\left [ \phi_n \right] 
+ 
P(\calA_{\rtil}^c) \notag \\
&
+ 
\EE[(1-\phi_n) \boldone_{\calA_{\rtil}} \Pi(f \in \calF^c | D_{n}) ]
\notag \\
&
+ 
\EE[(1-\phi_n) \boldone_{\calA_{\rtil}} \Pi(f \in \calF: \|f-\ftrue \|_n^2 \geq 2 \epsilon r^{2}| D_{n}) ] \notag \\
%\big\} \dd r^K \notag \\
=:&
%32\epsilon^2 \int_{r >0} r^K 
%(
A_n + B_n + C_n + D_n,
%) \dd r^K.
\label{eq:decompIntABCD}
\end{align}
for $\epsilonn > 0$ where the expectation is taken with respect to $D_n = (X_n,Y_n)$ distributed from the true distribution.
We give an upper bound of $A_n$, $B_n$, $C_n$ and $D_n$ in the following.

%\begin{align*}
%\int f(A) \Pi(\dd A  | \|A\|_{\infty} \leq  R, D_n )
%&=
%\frac{\int f(A) \boldone[\|A\|_{\infty} \leq  R] \Pi(\dd A  |  D_n )}
%{\Pi( \|A\|_{\infty} \leq  R | Y_{1:n})} \\
%&=
%\frac{\int f(A) \boldone[\|A\|_{\infty} \leq R] \Pi(\dd A  |  D_n )}
%{\Pi( \|A\|_{\infty} \leq  R | Y_{1:n})} \\
%&=
%\frac{\int f(A) \boldone[\|A\|_{\infty} \leq  R] \frac{p_{n,A}}{p_{n,A^*}}\Pi(\dd A )}
%{\int \boldone[\|A\|_{\infty} \leq  R]   \frac{p_{n,A}}{p_{n,A^*}}\Pi(\dd A ) }. % \\
%%\frac{ \Pi( Y_{1:n})}
%%{\Pi( \|A\|_{\infty} \leq  M , Y_{1:n})}
%\end{align*}
%Therefore, by the same argument as the previous one, there exists an event $\mathcal{A}$
%such that 
%$$
%B_r = P_{A^*}(\calA_{\rtil}) \geq e^{-n\epsilon^2 r^2 /8},
%$$
%and on the event $\calA_{\rtil}$,
%\begin{align*}
%\int \boldone[\|A\|_{\infty} \leq  R]   \frac{p_{n,A}}{p_{n,A^*}}\Pi(\dd A ) 
%&
%\geq e^{-n\epsilon^2 r^2} \Pi(A : \|A-A^*\|_n \leq \epsilon r,\|A\|_{\infty} \leq  R ) \\
%&
%\geq e^{-n\epsilon^2 r^2 + \Xi(\epsilon r)}  
%\geq e^{-n\epsilon^2 r^2 + \Xi(\epsilon)},
%\end{align*}
%where
%$$
%\Xi(\epsilon r)
%:= 
%- d^* \left(\sum_{k=1}^K M_k \right) 
% \log\left[ \frac{1}{6\sigma_{\mathrm{p},d^*}\sqrt{d^*}}  
%\left( {\textstyle \frac{\epsilon r}{ 2K\|A^*\|_{\max,2}^{K-1}} \wedge
%\frac{1}{2} \|A^*\|_{\max,2}} \right) \right] 
%+ \frac{\sum_{k=1}^K  \|U^{*(k)}\|_{F}^2 }{2\sigma_{\mathrm{p},d^*}^2} 
%- \log(\pi(d^*)).
%$$

~\\

\noindent {\it \bf Step 1.}

For arbitrary $r' >0$, define $C_{r'} = \{ f \in \calF \mid r' \leq 
\sqrt{n} \|f- \ftrue\|_n /\sigma \}$. 
We construct a maximum cardinality set $\Theta_{r'} \subset C_{r'}$ such that 
each $f,f'\in \Theta_{r'}$ satisfies $\sqrt{n}\|f-f'\|_n /\sigma \geq r'/2$.
Here we denote by $D(\epsilon,\calF,\|\cdot\|)$ the $\epsilon$-packing number of  a normed space $\calF$ attached with a norm $\|\cdot\|$.
Then, the cardinality of $\Theta_{r'}$ is equal to $D(r'/2,C_{r'},\sqrt{n}\|\cdot\|_n/\sigma)$.
%\footnote{For a normed space $\calF$ attached with a norm $\|\cdot\|$,
%the $\epsilon$-packing number is denoted by $D(\epsilon,\calF,\|\cdot\|)$.}.
Then,
following Lemma 13 of \cite{JMLR:Vaart&Zanten:2011},
one can construct a test $\tilde{\phi}_{r'}$ such that 
\begin{align*}
& P_{n,\ftrue} \tilde{\phi}_{r'} \leq 9 D(r'/2,C_{r'},\sqrt{n}\|\cdot\|_n/\sigma) e^{-\frac{1}{8} {r'}^2} 
\leq 9 D(r'/2,\calF,\sqrt{n}\|\cdot\|_n/\sigma) e^{-\frac{1}{8} {r'}^2}, \\
& \sup_{f \in C_{r'}} P_{n,f}(1-\tilde{\phi}_{r'}) \leq e^{-\frac{1}{8} {r'}^2}, 
\end{align*}
for any $r' > 0$.

Substituting $\sqrt{2} \sqrt{n} \epsilonn r/\sigma$ into $r'$
and denoting $\phi_n = \tilde{\phi}_{\sqrt{2} \sqrt{n} \epsilonn r/\sigma}$,
we obtain 
\begin{align}
P_{n,\ftrue} \phi_n &
\leq 9 e^{- \frac{1}{4 \sigma^2 }n \epsilonn^2 r^{2} + \log(D(r'/2,\calF,\sqrt{n}\|\cdot\|_n/\sigma)) } 
\label{eq:testInnerBound}
\\
\sup_{f\in C_{2 \sqrt{2} \sqrt{n} \epsilonn r}} P_{n,f}(1-\phi_n) & 
\leq e^{-\frac{1}{4 \sigma^2} n \epsilonn^2 r^2  }.
\label{eq:testOuterBound}
\end{align}

%%%%%%%%%%%%%%%%
%% Evaluation of covering number
Hence, we just need to evaluate the (log-)packing number $\log(D(r'/2 ,\calF,\sqrt{n}\|\cdot\|_n/\sigma))$ where 
$r'= \sqrt{2 n} \epsilonn r/\sigma$.
It is known that the packing number is bounded from above by the internal covering number\footnote{
The $\epsilon$-internal covering number of a (semi)-metric space $(T,d)$ is the minimum cardinality of 
a finite set such that every element in $T$ is in distance $\epsilon$ from the finite set with respect to the metric $d$.
We denote by $N(\epsilon,T,d)$ the $\epsilon$-internal covering number of $(T,d)$.
},
and the packing number of unit ball in $d$-dimensional Euclidean space and that of the covering number is bounded as 
$$
D(\epsilon, \calB_d(1), \|\cdot\|) \leq N(\epsilon, \calB_d(1), \|\cdot\|) \leq \left( \frac{4+\epsilon}{\epsilon} \right)^d.
$$
%Here $B_d(R)$ denotes the ball with the radius $R$ in $d$-dimensional Euclidean space.
Based on this we evaluate the packing number of $\calF$.

Let $f,f'\in \calF$ be two functions corresponding to 
parameters $(W^{(\ell)},b^{(\ell)})_{\ell=1}^L$ and $(W'^{(\ell)},b'^{(\ell)})_{\ell=1}^L$.
Notice that 
if $\|W^{(\ell)} - W'^{(\ell)}\|_{\F} \leq \epsilon$ and 
$\|b^{(\ell)} - b'^{(\ell)}\| \leq \epsilon$, then
\begin{align}
\|f  - f' \|_\infty
\leq L \epsilon \Rbar^{L-1} D_x + \sum_{\ell = 1}^L \epsilon \Rbar^{L - \ell}
= \epsilon (L \Rbar^{L-1} D_x + \sum_{\ell = 1}^L \Rbar^{L - \ell}).
\label{eq:FunctionMetricToParameterMetric}
\end{align}
Therefore, if $\epsilon \leq \delta/\hat{G}$  %\deltantwo/\hat{G}$
where 
$$\hat{G} = (L \Rbar^{L-1} D_x + \sum_{\ell = 1}^L \Rbar^{L - \ell}),$$
then $\|f  - f' \|_\infty \leq \delta$.
Hence, %if $2\epsilon r^K \leq KM_{r,d}^{K/2}$, using the relation \eqref{eq:AL2bound}
%and $\sqrt{n}\|A-A'\|_n \leq \sqrt{n}\|A-A'\|_\infty \leq \sqrt{n}\|A-A'\|_2$, 
the packing number of the function space $\calF$ can be bounded by using that of the parameter space as 
\begin{align}
&\log(D( r'/2 ,\calF,\sqrt{n}\|\cdot\|_n/\sigma)) 
= \log(D( r'/2 ,\calF,\sqrt{n}\|\cdot\|_n/\sigma)) 
\leq 
  \log(D( \sigma r'/(2\sqrt{n}) ,\calF, \|\cdot\|_\infty)) \notag \\
& 
\leq 
  \log(N( \sigma r'/(2\sqrt{n}) ,\calF, \|\cdot\|_\infty)) \notag \\
%\sum_{\ell=1}^L  \log(D(\sigma r'/(2\sqrt{n}) ,\calF, \|\cdot\|_\infty))
%+
%\sum_{\ell=1}^L  \log(D(\sigma r'/(2\sqrt{n}) ,\calF, \|\cdot\|_\infty))
%\notag \\
&
\leq
\sum_{\ell=1}^L \log(N(\sigma r'/(2\sqrt{n} \hat{G}), \calB_{m_{\ell +1} \times m_{\ell}}(\Rbar),\|\cdot\|))
+
\sum_{\ell=1}^L \log(N(\sigma r'/(2\sqrt{n} \hat{G}), \calB_{m_{\ell}}(\Rbarb),\|\cdot\|)) \notag \\
&
\leq 
\sum_{\ell=1}^L m_{\ell +1} m_\ell \log\left( \frac{4 + \frac{\sigma r'}{2\sqrt{n} \hat{G}\Rbar} }{\frac{\sigma r'}{2\sqrt{n}\hat{G}\Rbar}} \right)
+
\sum_{\ell=1}^L \mell \log\left( \frac{4 + \frac{\sigma r'}{2\sqrt{n} \hat{G}\Rbarb} }{\frac{\sigma r'}{2\sqrt{n}\hat{G}\Rbarb}} \right) \notag \\
&
=
\sum_{\ell=1}^L m_{\ell +1} m_\ell \log\left(1 + \frac{4 \sqrt{2} \hat{G}\Rbar }{ \epsilonn r} \right)
+
\sum_{\ell=1}^L \mell \log\left( 1 + \frac{4 \sqrt{2} \hat{G}\Rbarb  }{ \epsilonn r} \right).
\label{eq:DrCoveringBound}
\end{align}

Therefore, 
by \Eqref{eq:testInnerBound}, 
we have that 
$$
A_n \leq 9 \exp\left[ - \frac{1}{4 \sigma^2 }n \epsilonn^2 r^{2} 
+ \sum_{\ell=1}^L m_{\ell +1} m_\ell \log\left(1 + \frac{4 \sqrt{2} \hat{G}\Rbar }{ \epsilonn r} \right)
+
\sum_{\ell=1}^L \mell \log\left( 1 + \frac{4 \sqrt{2} \hat{G}\Rbarb  }{ \epsilonn r} \right) \right].
$$

%We define 
%$$
%A_n = P_{n,A^*} \phi.
%$$
%From now on, we denote by $\phi_r$ the test constructed above to indicate that the test is associated to 
%a specific $r$.
%

~\\

\noindent {\bf Step 2.}
Here, we evaluate $B_n$.
It can be evaluated by Lemma \ref{lemm:PriorMass} as 
%By Lemma \ref{lemm:PriorMass}, with high probability
$$
B_n \leq \exp(- n\tilepsilonn^2 \rtil^2/(8\sigma^2))
+
\exp(-n\deltanone^2 (\rtil^2-1)^2/(11 \Rhatinf^2)).
$$

~\\

\noindent {\it \bf Step 3.}
Since $\calF$ is the support of the prior distribution, it is obvious that $C_n = 0$.

~\\

\noindent {\bf Step 4.}
Here, we evaluate $D_n$.
Remind that $D_n$ is defined as  
$$
D_n = \EE_{X_n}\left[ P_{n,\ftrue}[\Pi(f \in \calF : \|f - \ftrue\|_n > \sqrt{2} \epsilon r | Y_{n}) (1-\phi_n) \boldone_{\calA_{\rtil}}] \right].
$$
%Since $\calA_{\rtil} = \calA_{1,r} \cap \calA_{2,r} \subseteq \calA_{2,r}$, we have 
Define 
$$
\Xi_n(\rtil) := - \log (\Pi(f : \|f - \fstar \|_{\infty} \leq \deltantwo \rtil ))
$$
for $\rtil > 0$.
Then, $D_n$ can be bounded as %We have that
\begin{align*}
D_n 
&
=
\EE_{X_n}\left\{  
P_{n,\ftrue}\left[ 
\frac{ \int_{\calF} \boldone\{f : \|f - \ftrue\|_n > \sqrt{2} \epsilon r\} p_{n,f} \dd \Pi(f) }{\int_{\calF} p_{n,f} \dd \Pi(f)}
 (1-\phi_n) \boldone_{\calA_{\rtil}} \right] \right\} 
\\
&
=
\EE_{X_n}\left\{  
P_{n,\ftrue}\left[ 
\frac{ \int_{\calF} \boldone\{f : \|f - \ftrue\|_n > \sqrt{2} \epsilon r\} \frac{p_{n,f}}{p_{n,\ftrue}} \dd \Pi(f) }{\int_{\calF} \frac{p_{n,f}}{p_{n,\ftrue}} \dd \Pi(f)}
 (1-\phi_n) \boldone_{\calA_{\rtil}} \right] \right\} 
\\
&\leq   \EE_{X_n}\left\{ P_{n,\ftrue}\left[ \int_{f \in \calF : \|f - \ftrue \|_n > \sqrt{2} \epsilon r}  p_{n,f}/p_{n,\ftrue}  \dd  \Pi(f) 
\exp(n \tilepsilonn^2 \rtil^2/\sigma^2  + \Xi_n(\rtil)) (1-\phi_n)\boldone_{\calA_{\rtil}} \right] \right\} \\
&=  
\EE_{X_n}\left\{   \int_{f \in \calF : \|f - \ftrue \|_n > \sqrt{2} \epsilon r}  P_{n,f}[(1-\phi_n)\boldone_{\calA_{\rtil}}] \exp(n \tilepsilonn^2 \rtil^2/\sigma^2  + \Xi_n(\rtil)) 
 \dd  \Pi(f) \right\} \\
&
\leq 
\exp\left( \frac{n \tilepsilonn^2 \rtil^2}{\sigma^2}  + \Xi_n(\rtil) - \frac{n \epsilonn^2 r^2}{4\sigma^2 }   \right).
\end{align*}
By using the relation \eqref{eq:FunctionMetricToParameterMetric}, 
the prior mass $\Xi_n(\rtil)$ can be bounded as 
\begin{align}
\Xi_n(\rtil) & = - \log (\Pi(f : \|f - \fstar \|_{\infty} \leq \deltantwo \rtil ))  \notag \\
& \leq - \log (\Pi(f : \|f - \fstar \|_{\infty} \leq \deltantwo ))  \notag \\
& \leq - \sum_{\ell=1}^L \log (\Pi(\Well{\ell} : \|\Well{\ell} - \Wstarell{\ell} \|_{\F} \leq \deltantwo/\hat{G} )) \notag \\
& ~~~~- \sum_{\ell=1}^L \log (\Pi(\bell{\ell} : \|\bell{\ell} - \bstarell{\ell} \|_{2} \leq \deltantwo/\hat{G} ))  
 \notag \\
& \leq 
\sum_{\ell=1}^L m_\ell m_{\ell + 1} \log(\Rbar \hat{G}/  (\deltantwo/2))
+
\sum_{\ell =1}^L \mell \log(\Rbarb \hat{G}/(\deltantwo/2)).
\label{eq:XinBound}
\end{align}

\noindent {\it \bf Step 5.}
Finally, we combine the results obtained above.

%for $\epsilon > 0$,
\begin{align}
& \EE\left[ \Pi(\|f - \ftrue \|_n \geq \sqrt{2} \epsilonn r  | Y_{n} )  \right] \notag \\
\leq
&
9 \exp\left[ - \frac{1}{4\sigma^2}n \epsilonn^2 r^{2} 
+ \sum_{\ell=1}^L m_{\ell +1} m_\ell \log\left(1 + \frac{4 \sqrt{2} \hat{G}\Rbar }{ \epsilonn r} \right)
+
\sum_{\ell=1}^L \mell \log\left( 1 + \frac{4 \sqrt{2} \hat{G}\Rbarb  }{ \epsilonn r} \right) \right] 
\notag
\\
& +
\exp(- n\tilepsilonn^2 \rtil^2/(8\sigma^2))
+
\exp(-n\deltanone^2 (\rtil^2-1)^2/(11 \Rhatinf^2)) \notag
\\
& +
\exp\left(\frac{n}{\sigma^2} \tilepsilonn^2 \rtil^2  + \Xi_n(\rtil) - \frac{n \epsilonn^2 r^2}{4\sigma^2 }   \right).
\label{eq:InsampleBoundMatomeOne}
%\frac{\log(n \tilepsilonn^2 \rtil)}{ 
\end{align}
Now, let $1 \leq \rtil \leq r$.
Then, since $\epsilonn \geq \deltantwo$ and $r \geq 1$, 
we have that
$$
\max\left\{\log\left(\frac{2\hat{G} R'}{\deltantwo} \right),
\log\left(1 + \frac{4 \sqrt{2} \hat{G}R'}{\epsilonn r} \right)
\right\}
\leq 
\log\left(1 + \frac{4 \sqrt{2} \hat{G}R'}{\deltantwo} \right),
$$
for all $R' > 0$.
Now, we set $\deltantwo$ to satisfy
\begin{align}
\frac{n \deltantwo^2}{\sigma^2} & 
\geq \sum_{\ell=1}^L \mell m_{\ell +1}
\log\left(1 + \frac{4 \sqrt{2} \hat{G}\Rbar}{\deltantwo} \right)
+
\sum_{\ell=1}^L 
\mell
\log\left(1 + \frac{4 \sqrt{2} \hat{G}\Rbarb}{\deltantwo} \right) ( \geq \Xi_n(\tilde{r})),
\label{eq:deltantwoCondition}
\end{align}
which can be satisfied by 
%The condition \eqref{eq:deltantwoCondition} is satisfied by
$$
\deltantwo^2 =
\frac{2 \sigma^2}{n} \sum_{\ell=1}^L \mell m_{\ell +1}
\logone \left(1 + \frac{4 \sqrt{2} \hat{G}\max\{\Rbar,\Rbarb\}
\sqrt{n}}{
\sigma \sqrt{\sum_{\ell=1}^L \mell m_{\ell +1}}} \right).
$$
Then, by noticing 
$n \deltantwo^2\leq n \tilepsilonn^2$
and \Eqref{eq:XinBound},
the RHS of \Eqref{eq:InsampleBoundMatomeOne}
is upper bounded by
$$
\exp(- n\tilepsilonn^2 \rtil^2/(8\sigma^2))
+
\exp(-n\deltanone^2 (\rtil^2-1)^2/(11 \Rhatinf^2)) \\
+
10 
\exp\left[ 2 \frac{n}{\sigma^2} \tilepsilonn^2  \rtil^2 - \frac{n\epsilonn^2 r^2}{4\sigma^2}  \right].
$$
Here, by setting $r^2 = 12 \rtil^2 \geq 12$, then the RHS is further bounded as 
\begin{align*}
& \exp(-n\deltanone^2 (\rtil^2-1)^2/(11 \Rhatinf^2)) 
+ \exp(- n\tilepsilonn^2 \rtil^2/(8\sigma^2))
+ 10 \exp(- n \epsilonn^2 \rtil^2/\sigma^2) \\
\leq 
& \exp\left[-n\deltanone^2 (\rtil^2-1)^2/(11 \Rhatinf^2)\right]
+ 11 \exp(- n \epsilonn^2 \rtil^2/(8\sigma^2)).
\end{align*}

%%%%%%%%%%%%%%%%%%%%%%%%%%%%%%%%%%%%%%%%%
%
%We define the event $\calA_{\rtil}$ as in the proof of Theorem 1.
%Then, we have the same upper bound of $B_r$ as in the previous section.
%By \Eqref{eq:XiBound2}, on the event $\calA_{\rtil}$,
%$$
%\int \boldone[\|A\|_{\infty} \leq  R]   \frac{p_{n,A}}{p_{n,A^*}}\Pi(\dd A ) 
%\geq \min\{\exp(r^2 + \Xi(1/\sqrt{n})),\exp(C r^{2K})\}^{-1}.
%$$
%Other quantities such as $A_r,C_n,D_r$ are also bounded in the same manner 
%because 
%$$
%\int f(A) \boldone[\|A\|_{\infty} \leq  M] \frac{p_{n,A}}{p_{n,A^*}}\Pi(\dd A )
%\leq 
%\int f(A)  \frac{p_{n,A}}{p_{n,A^*}}\Pi(\dd A ).
%$$
%Thus, 
%the conditional posterior mean of the squared error 
%$\EE\left[\int \|A-A^*\|_n^2 \dd\Pi(A|\|A\|_{\infty} \leq  M, Y_{1:n}) \right]$
%can be bounded by the same quantity as the RHS of \Eqref{eq:AAstarIntBound}.
%
%
%Step ...
%
%
%Combining \Eqrefs{eq:DrSummandFirstBound} and \eqref{eq:DrSummandSecondBound} with \Eqref{eq:DrCoveringBound}  
%results in the following upper bound of $D_r$:
%\begin{align}
%&D_r 
%\leq  
%\sum_d  \pi(d)
%\min\left\{
%\exp(c n \epsilon^2 r^{2K}),
% \exp\left[ - \frac{1}{2}n\epsilon^2 r^{2K} + d\left(\sum_{k=1}^K M_k\right) \log\left(\frac{3KM_{r,d}^{K/2}}{\epsilon r^K}\right)\right]
%\right\}
%\label{eq:DrFinalBound}
%\end{align}
%for $r \geq 1$.
%

\begin{Lemma}
\label{lemm:PriorMass}

Then, 
for any $\rtil >1$, it holds that
\begin{align*}
& P_{D_n} \left( \int \frac{p_{n,f}(Y_n)}{p_{n,\ftrue}(Y_n)} \Pi(\dd f) \geq \exp(- n \tilepsilonn^2 \rtil^2/\sigma^2) 
\Pi(f : \|f - \fstar \|_{\infty} \leq \deltantwo \rtil )\right) \\
& \geq 1- %C 
\exp(- n\tilepsilonn^2 \rtil^2/(8\sigma^2))
-
\exp(-n\deltanone^2 (\rtil^2-1)^2/(11 \Rhatinf^2)). %\frac{\log(n \tilepsilonn^2 \rtil)}{ \sqrt{n\tilepsilonn^2 \rtil }}
\end{align*}
\end{Lemma}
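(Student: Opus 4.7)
The plan is to lower bound the marginal likelihood ratio by restricting the integral to the $L_\infty$-ball $\calF_0 := \{f : \|f - \fstar\|_\infty \leq \deltantwo \tilde r\}$ around the finite-dimensional approximation $\fstar$ produced by Theorem~\ref{th:FiniteApprox}, giving
\[
\int \frac{p_{n,f}(Y_n)}{p_{n,\ftrue}(Y_n)}\, \Pi(\dd f) \;\geq\; \Pi(\calF_0) \int_{\calF_0} \frac{p_{n,f}(Y_n)}{p_{n,\ftrue}(Y_n)}\, \Pi_0(\dd f),
\]
with $\Pi_0 := \Pi|_{\calF_0}/\Pi(\calF_0)$, and then applying Jensen's inequality to move the $\Pi_0$-average inside the exponent. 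Writing the Gaussian log-likelihood ratio as $Z(f) = -\frac{n}{2\sigma^2}\|f-\ftrue\|_n^2 + \frac{1}{\sigma^2}\sum_{i=1}^n \xi_i(f(x_i)-\ftrue(x_i))$, it suffices to show that $\int Z\,\dd\Pi_0 \geq -n\tilepsilonn^2 \tilde r^2/\sigma^2$ (up to absorbable constants) on an event of the stated probability. Linearity of the noise term lets one replace $f$ by the Bochner mean $\bar f(x) := \int f(x)\,\Pi_0(\dd f)$.

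\textbf{Quadratic piece via multiplicative Bernstein.} For any $f \in \calF_0$, triangle inequality gives $\|f-\ftrue\|_n^2 \leq 2\|f-\fstar\|_\infty^2 + 2\|\fstar-\ftrue\|_n^2 \leq 2\deltantwo^2 \tilde r^2 + 2\|\fstar-\ftrue\|_n^2$, uniformly in $f \in \calF_0$. The random quantity $\|\fstar - \ftrue\|_n^2 = \frac{1}{n}\sum_{i=1}^n (\fstar(x_i)-\ftrue(x_i))^2$ is an i.i.d.\ average of non-negative variables bounded in $L_\infty$ by a constant multiple of $\Rhatinf^2$ (using Lemma~\ref{eq:fbounded} and Lemma~\ref{supplemma:SupnormBounds}) and with mean $\|\fstar-\ftrue\|_{L_2(P(X))}^2 \leq \deltanone^2$ by Theorem~\ref{th:FiniteApprox}. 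Exploiting the variance bound $\mathrm{Var}(Y) \leq \|Y\|_\infty \cdot \mathbb{E}[Y]$ inside Bernstein's inequality yields $\|\fstar-\ftrue\|_n^2 \leq \tilde r^2 \deltanone^2$ except on an event of probability at most $\exp(-n\deltanone^2(\tilde r^2-1)^2/(11\Rhatinf^2))$, which is exactly the second failure term in the statement.

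\textbf{Noise piece via Gaussian tail.} Conditional on $X_n$, the sum $\frac{1}{\sigma^2}\sum_{i=1}^n \xi_i(\bar f(x_i)-\ftrue(x_i))$ is centered Gaussian with variance $n\|\bar f - \ftrue\|_n^2/\sigma^2$. On the good event of the preceding step, $\|\bar f - \ftrue\|_n \leq \|\bar f - \fstar\|_\infty + \|\fstar - \ftrue\|_n \leq \deltantwo \tilde r + \tilde r \deltanone = \tilepsilonn \tilde r$, so the standard sub-Gaussian tail gives
\[
\Big| \tfrac{1}{\sigma^2} \sum_{i=1}^n \xi_i(\bar f(x_i) - \ftrue(x_i)) \Big| \;\leq\; \frac{n\tilepsilonn^2 \tilde r^2}{2\sigma^2}
\]
with conditional probability at least $1 - \exp(-n\tilepsilonn^2\tilde r^2/(8\sigma^2))$, matching the first failure term.

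\textbf{Combination and main obstacle.} Assembling the two bounds on the intersection of good events gives $\int Z \,\dd\Pi_0 \geq -\frac{n(\deltanone^2+\deltantwo^2)\tilde r^2}{\sigma^2} - \frac{n\tilepsilonn^2\tilde r^2}{2\sigma^2} \gtrsim -\frac{n\tilepsilonn^2\tilde r^2}{\sigma^2}$, after which exponentiation and the prior-mass prefactor $\Pi(\calF_0) = \Pi(\|f-\fstar\|_\infty \leq \deltantwo \tilde r)$ yield the claim. The main obstacle is the quadratic piece: to obtain the $(\tilde r^2-1)^2$ exponent and the denominator constant $11\Rhatinf^2$ one must use the sharp variance bound for squared-bounded random variables in Bernstein rather than a Hoeffding estimate, and carefully track the linear-in-$t$ correction from Bernstein's $M t/3$ term. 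Everything else is bookkeeping of constants that can be absorbed into $\tilde r$.
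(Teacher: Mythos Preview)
Your argument is correct and follows essentially the same two-step structure as the paper: (i) a lower bound on the marginal likelihood ratio coming from Gaussian concentration, and (ii) a Bernstein bound to replace $\|\fstar-\ftrue\|_n$ by $\tilde r\,\deltanone$. The difference is purely packaging: the paper delegates step (i) entirely to Lemma~14 of \citet{JMLR:Vaart&Zanten:2011}, which asserts directly that
\[
P_{Y_n\mid X_n}\!\left(\int \frac{p_{n,f}}{p_{n,\ftrue}}\,\Pi(\dd f)\ge \exp(-n\tilepsilonn^2\tilde r^2/\sigma^2)\,\Pi(f:\|f-\ftrue\|_n\le \tilepsilonn\tilde r)\right)\ge 1-\exp(-n\tilepsilonn^2\tilde r^2/(8\sigma^2)),
\]
and then only needs the set inclusion $\{\|f-\fstar\|_\infty\le\deltantwo\tilde r\}\subset\{\|f-\ftrue\|_n\le\tilepsilonn\tilde r\}$ on the Bernstein good event. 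Your Jensen-plus-Gaussian-tail argument is exactly how that lemma is proved, so you are re-deriving the cited result from scratch; this buys self-containment at the cost of a few lines.

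One small point on constants: your use of $(a+b)^2\le 2a^2+2b^2$ in the quadratic piece is unnecessarily lossy and produces $-\tfrac{3}{2}n\tilepsilonn^2\tilde r^2/\sigma^2$ rather than $-n\tilepsilonn^2\tilde r^2/\sigma^2$. If instead you bound $\|f-\ftrue\|_n\le\|f-\fstar\|_\infty+\|\fstar-\ftrue\|_n\le\deltantwo\tilde r+\tilde r\deltanone=\tilepsilonn\tilde r$ directly (before squaring), the quadratic contribution is $-\tfrac{n}{2\sigma^2}\tilepsilonn^2\tilde r^2$, and together with the noise piece you recover the stated exponent exactly.
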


\begin{proof}

Note that Lemma 14 of \cite{JMLR:Vaart&Zanten:2011} showed that 
$$
P_{Y_n|X_n} \left( \int \frac{p_{n,f}(Y_n)}{p_{n,\ftrue}(Y_n)} \Pi(\dd f) \geq \exp(- n \tilepsilonn^2 \rtil^2/\sigma^2) 
\Pi(f : \|f - \ftrue \|_{n} \leq \tilepsilonn \rtil )\right)
\geq 1- \exp(- n \tilepsilonn^2 \rtil^2/(8\sigma^2)). %\frac{\log(n \tilepsilonn^2 \rtil)}{ \sqrt{n\tilepsilonn^2 \rtil }}
$$
where $P_{Y_n|X_n}$ represents the conditional distribution of $Y_n = (y_i)_{i=1}^n$ conditioned by $X_n = (x_i)_{i=1}^n$.
Therefore the proof is reduced to show 
$\|f - \ftrue \|_{n} \leq \deltanone \tilde{r} + \|f - \fstar \|_{\infty}$ with high probability.
Note that
\begin{align*}
\|f - \ftrue \|_{n}
\leq \|f - \fstar \|_{n} + \|\fstar - \ftrue \|_{n}
\leq \|f - \fstar \|_{\infty} + \|\fstar - \ftrue \|_{n}.
\end{align*}
Hence, we just need to show $\|\fstar - \ftrue \|_{n} \leq \tilde{r} \|\fstar - \ftrue \|_{\LPi} \leq \tilde{r} \deltanone$ with high probability.
This can be shown by Bernstein's inequality:
%Substituting $\fstar$ to $f$ in \Eqref{eq:BernsteinL2Bound}, and noting $\|\fstar - \ftrue\|_\infty \leq 2 \Rhatinf$, 
%we obtain that, for all $\tilde{r}' > 0$,  
%\begin{align}
%P\left(\sqrt{1 + \tilde{r}' } \|\fstar-\ftrue \|_{\LPi} \leq  \|\fstar-\ftrue\|_n \right)
%& 
%\leq \exp\left[-\frac{3 n}{8 }  
%\left( \tilde{r}' \frac{\|\fstar-\ftrue\|_{\LPi}}{ \|\fstar-\ftrue\|_{\infty}} \right)^2 \right]  \notag \\
%&
%\leq \exp\left(-  \frac{3 n \deltanone^2\tilde{r}'^{2}}{ 32 \Rhatinf^2}   \right).
%\label{eq:Bernsteinfstarftruebound}
%\end{align}
%
%
%To bound this, we need to evaluate the difference between the empirical norm $\|f-\ftrue \|_n$ 
%and the expected norm $\|f-\ftrue \|_{\LPi}$, which can be done by Bernstein's inequality:
$$
P\left(\sqrt{1 +\tilde{r}' }\|\fstar-\ftrue \|_{\LPi} \leq \|\fstar -\ftrue \|_n \right)
\leq \exp\left(- \frac{n \tilde{r}'^2 \|\fstar-\ftrue \|_{\LPi}^4}{2(v + \|\fstar-\ftrue\|_{\infty}^2\|\fstar-\ftrue \|_{\LPi}^2/3)}  \right),
$$
where $v = \EE_X[ ( (\fstar(X) - \ftrue(X))^2  -\|\fstar - \ftrue \|_{\LPi}^2)^2 ]$.
Now $v \leq \EE_X[  (\fstar(X) - \ftrue(X))^4 ]
\leq \|\fstar-\ftrue \|_\infty^2 \|\fstar - \ftrue\|_{\LPi}^2
=\|\fstar-\ftrue \|_\infty^2 \|\fstar - \ftrue\|_{\LPi}^2$. 
This yields that
\begin{align}
P\left(\sqrt{1 +\tilde{r}' }\|\fstar-\ftrue \|_{\LPi} \leq \|\fstar -\ftrue \|_n \right)
\leq \exp\left[-\frac{3 n \tilde{r}'^2 }{8 }  
\left( \frac{\|\fstar -\ftrue\|_{\LPi}}{ \|\fstar -\ftrue\|_{\infty}} \right)^2 \right].
\label{eq:BernsteinL2Bound}
\end{align}
Since $\| \fstar - \ftrue \|_{\infty} \leq 2 \Rhatinf$ and $\| \fstar - \ftrue\|_{\LPi} \leq \deltanone$, 
the RHS is further bounded by 
$
\exp\left(-\frac{3 n \tilde{r}'^2 \deltanone^2}{32\Rhatinf^2}  \right).
$

Therefore, with probability $1- \exp\left(-  \frac{3 n \deltanone^2 \tilde{r}'^{2}}{ 32 \Rhatinf^2}   \right)$, it holds that 
\begin{align*}
\|f - \ftrue \|_{n}
\leq \|f - \fstar \|_{\infty} + \sqrt{1 + \tilde{r}'} \|\fstar - \ftrue \|_{\LPi}
\leq \|f - \fstar \|_{\infty} + \sqrt{1 + \tilde{r}'} \deltanone
%\leq \sqrt{1 + \tilde{r}'}(\|f - \fstar \|_{\infty} + \deltanone)
\end{align*}
for all $f$ such that $\|f\|_\infty < \infty$.
Thus by setting $\tilde{r}'$ so that $\rtil = \sqrt{1 +\tilde{r}'}$, we obtain the assertion.
\end{proof}

\subsection{Out of sample error}
\label{suppsec:OutOfPredError}
Now, we are going to show the posterior contraction rate with respect to the out-of-sample predictive error:
\begin{align}
%\EE_{D_n}\left[\int  \|A - A^* \|_{\LPi}^2 \dd\Pi(A|\|A\|_{\infty} \leq  R, D_n) \right]. 
\EE_{D_n}\left[  \Pi(f : \|f - \ftrue \|_{\LPi} \geq \epsilonn r | D_n) \right],
\label{eq:ExOutPredError}
\end{align}
for sufficiently large $r \geq 1$.
%Now, we bound the expected error \eqref{eq:ExOutPredError} by 
%$I + II + III$ where, for $\eta = \epsilon \max\{4R,1\}=\frac{1}{\sqrt{n}} \max\{4R,1\}$, $I$, $II$ and $III$ are defined by

To bound the posterior tail, we divide that into four parts:
\begin{align*}
&
\mathrm{I} = \EE_{D_n}\left[ \boldone_{\calA_{\rtil}^c} \right],  \\ %\int_{0}^\infty \eta^2 r \boldone_{\calA_{\rtil}^c} \dd r \right],  \\
& 
\mathrm{II} = \EE_{D_n}\left[
 \boldone_{\calA_{\rtil}} \Pi(f: \sqrt{2} \|f - \ftrue \|_n >  \epsilonn r , ~\|f\|_{\infty} \leq  \Rhatinf  \mid D_n)  \right] ,
%\int_{0}^\infty \eta^2r   \boldone_{\calA_{\rtil}} \Pi(A: \sqrt{2} \|A - A^*\|_n >  \eta r \mid \|A\|_{\infty} \leq  R, D_n) \dd r \right] ,
 \\
&
\mathrm{III}= \EE_{D_n}\left[
\boldone_{\calA_{\rtil}} \Pi(f: \|f - \ftrue \|_{\LPi}  > \epsilonn r \geq \sqrt{2} \|f - \ftrue \|_n,~\|f \|_{\infty} \leq  \Rhatinf  \mid D_n)
%\int_{0}^\infty \eta^2 r \boldone_{\calA_{\rtil}} \Pi(A: \|A - A^*\|_{\LPi}  > \eta r \geq \sqrt{2} \|A - A^*\|_n \mid  \|A\|_{\infty} \leq  R, D_n) \dd r
\right], \\
&
\mathrm{IV}= \EE_{D_n}\left[
\boldone_{\calA_{\rtil}} \Pi(f:  \|f \|_{\infty} >  \Rhatinf  \mid D_n)
%\int_{0}^\infty \eta^2 r \boldone_{\calA_{\rtil}} \Pi(A: \|A - A^*\|_{\LPi}  > \eta r \geq \sqrt{2} \|A - A^*\|_n \mid  \|A\|_{\infty} \leq  R, D_n) \dd r
\right].
\end{align*}

The term $\mathrm{I}$ and $\mathrm{II}$ are already evaluated in Section \ref{sec:AppInsampleError},
that is, $\mathrm{I} + \mathrm{II}$
is bounded by the right hand side of \Eqref{eq:decompIntABCD} 
% \leq
%\EE_{D_n}\left[
 % \Pi(f: \sqrt{2} \|f - \ftrue \|_n >  \epsilonn r   \mid D_n)  \right] 
%$
which is what we have upper bounded in Section \ref{sec:AppInsampleError}.

The term $\mathrm{III}$ is bounded as follows.
To bound this, we need to evaluate the difference between the empirical norm $\|f-\ftrue \|_n$ 
and the expected norm $\|f-\ftrue \|_{\LPi}$, which can be done by Bernstein's inequality.
Following the same argument to derive \Eqref{eq:BernsteinL2Bound}, 
it holds that
$$
P\left(\|f-\ftrue \|_{\LPi} \geq \sqrt{2} \|f-\ftrue\|_n \right)
\leq \exp\left(-\frac{n \|f-\ftrue\|_{\LPi}^2}{11 \Rhatinf^2}  \right).
$$

Therefore, we arrive at the following bound of III:
\begin{align*}
\mathrm{III}
&\leq  \EE_{X_n}\left[  P_{n,\ftrue}\left[ \int_{f \in \calF :\|f - \ftrue \|_{\LPi}  > \epsilonn r \geq \sqrt{2} \|f - \ftrue \|_n }  p_{n,f}/p_{n,\ftrue}  \dd  \Pi(f) \right] 
\exp(n \tilepsilonn^2 \rtil^2/\sigma^2  + \Xi_n(\rtil))  \boldone_{\calA_{\rtil}} 
\right]\\
%&  \exp(n \tilepsilonn^2 \rtil^2/\sigma^2  + \Xi_n(\rtil)) (1-\phi_n)\boldone_{\calA_{\rtil}} \right] \\
&\leq 
\exp(n \tilepsilonn^2 \rtil^2/\sigma^2  + \Xi_n(\rtil)) 
 \int_{f \in \calF: \|f - \ftrue \|_{\LPi}  > \epsilonn r}   
P(\|f - \ftrue \|_{\LPi}  \geq \sqrt{2} \|f - \ftrue \|_n)
 \dd  \Pi(f) \\
&
\leq \exp\left( \frac{n \tilepsilonn^2 \rtil^2}{\sigma^2}  + \Xi_n(\rtil)  -\frac{n \epsilonn^2 r^2 }{11 \Rhatinf^2}  \right) \\
&
\leq 
 \exp\left( \frac{2 n \tilepsilonn^2 \rtil^2}{\sigma^2} -\frac{n \epsilonn^2 r^2 }{11 \Rhatinf^2}  \right).
%\exp\left( \frac{n \tilepsilonn^2 \rtil^2}{\sigma^2}  + \Xi_n(\rtil) - \frac{1}{4\sigma^2 } n \epsilonn^2 r^2  \right).
\end{align*}
%
%
%Therefore, we arrive at the following bound of III:
%\begin{align*}
%\mathrm{III} 
%& \leq \exp\left( \frac{n \tilepsilonn^2 \rtil^2}{\sigma^2}  + \Xi_n(\rtil)  -\frac{n \epsilonn^2 r^2 }{3 \Rhatinf^2}  \right) \\
%& \leq  \exp\left( \frac{2 n \tilepsilonn^2 \rtil^2}{\sigma^2} -\frac{n \epsilonn^2 r^2 }{3 \Rhatinf^2}  \right).
%\end{align*}
%Hence, for all $r^2 \geq \max\{ 12, 9 \Rhatinf^2/\sigma^2 \}\rtil^2$, we obtain that 
%\begin{align*}
%\mathrm{III} 
%& \leq   \exp\left(  - n \tilepsilonn^2 \rtil^2/\sigma^2 \right).
%\end{align*}

Finally, since all $f \in \calF$ satisfies $\|f\|_\infty \leq \Rhatinf$, $\mathrm{IV} = 0$.

Combining the results we arrive at 
\begin{align*}
%\EE_{D_n}\left[\int  \|A - A^* \|_{\LPi}^2 \dd\Pi(A|\|A\|_{\infty} \leq  R, D_n) \right]. 
\EE_{D_n}\left[  \Pi(f : \|f - \ftrue \|_{\LPi} \geq \epsilonn r | D_n) \right]
\leq \exp\left[-n\deltanone^2 (\rtil^2-1)^2/(11 \Rhatinf^2)\right] + 12 \exp\left(  - n \tilepsilonn^2 \rtil^2/(8\sigma^2) \right),
%\label{eq:ExOutPredError}
\end{align*}
for all $\rtil \geq 1$ and $r  \geq \max\{ 12, 33 \Rhatinf^2/\sigma^2 \} \rtil^2 $.
This concludes the proof of Theorem \ref{th:PosteriorContraction}.

%The term IV is bounded as follows.
%Since 
%\begin{align*}
%&\{\{W^{(\ell)}\}_{\ell=1}^L \in \Real^{d \times M_1} \times \dots \times  \Real^{d \times M_K}
%\mid  
%\|W^{(\ell)}\|_{\max} \geq \sqrt{M_{r,d}} \} \\
%\subseteq 
%&\{\{U^{(k)}\}_{k=1}^K \in \Real^{d \times M_1} \times \dots \times  \Real^{d \times M_K}
%\mid  
%\sum_{k=1}^K \|U^{(k)}\|_{2}^2 \geq M_{r,d} \},
%\end{align*}
%Proposition~\ref{lemm:TailBoundChi2} yields the bound of the prior probability of $\calF_{r}^c$ as 
%\begin{align*}
%\Pi(\calF_{r}^c) = \sum_{d=1}^M \Pi(\calF_{r,d}^c) \pi(d) \leq \sum_{d=1}^M 
%\exp\left[\frac{d\sum_k M_k}{2} + \frac{d\sum_k M_k}{2}\log\left(\frac{M_{r,d}}{d \sum_k  M_k \sigmap{d}^2}\right) - \frac{M_{r,d}}{2\sigmap{d}^2}  \right] \pi(d).
%\end{align*}
%Therefore, its posterior probability in the event $\calA_{\rtil}$ is bounded as 
%\begin{align*}
%C_{r} 
%&=P_{n,A^*}[ \Pi(\calF_{r}^c|Y_{1:n})\boldone_{\calA_{\rtil}}(1-\phi_r)] \\
%&\leq
%\sum_{d=1}^M 
%\exp\left[n \epsilon^2 r^2 + \Xi(\epsilon r) + \frac{d\sum_k M_k}{2} + \frac{d\sum_k M_k}{2}
%\log\left(\frac{M_{r,d}}{d \sum_k  M_k \sigmap{d}^2}\right) - \frac{M_{r,d}}{2\sigmap{d}^2} \right] \pi(d).
%\end{align*}

\section{Convergence rate for the empirical risk minimizer}

\label{sec:ERMproof}

\begin{Proposition}[Gaussian concentration inequality (Theorem 2.5.8 in \cite{GineNickl2015mathematical})]
Let $(\xi_i)_{i=1}^n$ be i.i.d. Gaussian sequence with mean 0 and variance $\sigma^2$,
and $(x_i)_{i=1}^n \subset \calX$ be a given set of input variables.
Then, for a set $\tilde{\calF}$ of functions  from $\calX$ to $\Real$ which is 
separable with respect to $L_\infty$-norm and $\sup_{f \in \tilde{\calF}} \left| \sum_{i=1}^n  \frac{1}{n} \xi_i f(x_i) \right| < \infty$ almost surely,
it holds that for every $r > 0$, 
$$
P\left( \sup_{f \in \tilde{\calF}} \left| \sum_{i=1}^n  \frac{1}{n} \xi_i f(x_i) \right| \geq \EE \left[\sup_{f \in \tilde{\calF}}  
\left| \frac{1}{n} \sum_{i=1}^n \xi_i f(x_i) \right|\right] + r \right) \leq 
\exp[- n r^2/2(\sigma \|\tilde{\calF}\|_n)^2]
$$
where $\|\tilde{\calF}\|_n^2 = \sup_{f \in \tilde{\calF}} \frac{1}{n} \sum_{i=1}^n f(x_i)^2$.
Here the probability is taken with respect to $(\xi_i)_{i=1}^n$.
\end{Proposition}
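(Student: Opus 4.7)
The plan is to derive this concentration inequality from the Borell--TIS (Gaussian isoperimetric) inequality by recognizing the supremum as a Lipschitz functional of the Gaussian vector. Specifically, I would set $F(\xi) := \sup_{f \in \tilde{\calF}} \bigl| \tfrac{1}{n}\sum_{i=1}^n \xi_i f(x_i) \bigr|$ and view it as a map $\Real^n \to \Real$ evaluated at $\xi \sim N(0, \sigma^2 I_n)$. The strategy is in two steps: (i) compute a sharp Lipschitz constant of $F$ in the Euclidean metric, (ii) apply Gaussian concentration to $F$.

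For step (i), I would rewrite $F(\xi) = \sup_{f \in \tilde{\calF}} |\langle \xi, v_f \rangle|$, where $v_f \in \Real^n$ has $i$-th coordinate $f(x_i)/n$. By the triangle inequality and Cauchy--Schwarz,
\[
| F(\xi) - F(\xi') | \leq \sup_{f \in \tilde{\calF}} | \langle \xi - \xi', v_f\rangle | \leq \bigl( \sup_{f \in \tilde{\calF}} \|v_f\| \bigr) \cdot \|\xi - \xi'\|,
\]
and a direct computation gives $\sup_{f \in \tilde{\calF}} \|v_f\|^2 = n^{-2} \sup_f \sum_i f(x_i)^2 = \|\tilde{\calF}\|_n^2 / n$. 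Hence $F$ is Lipschitz with constant $L = \|\tilde{\calF}\|_n / \sqrt{n}$.

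For step (ii), I would invoke the Borell--TIS inequality: for any $L$-Lipschitz function $F$ on $\Real^n$ and any standard Gaussian vector $\zeta \sim N(0, I_n)$, one has $P(F(\zeta) \geq \EE[F(\zeta)] + t) \leq \exp(-t^2/(2L^2))$. Writing $\xi = \sigma \zeta$, the composed function $\zeta \mapsto F(\sigma \zeta)$ is $\sigma L$-Lipschitz, which yields
\[
P\!\left( F(\xi) \geq \EE[F(\xi)] + r \right) \leq \exp\!\left( - \frac{r^2}{2 \sigma^2 L^2} \right) = \exp\!\left( - \frac{n r^2}{2 (\sigma \|\tilde{\calF}\|_n)^2} \right),
\]
which is exactly the desired bound.

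The main obstacle is a measurability and well-definedness issue: Borell--TIS requires $F$ to be a measurable function with a finite mean, and in general an uncountable supremum of measurable functions need not be measurable. This is precisely what the two hypotheses in the statement address: the assumption that $\tilde{\calF}$ is separable with respect to the $L_\infty$-norm allows one to replace the supremum by one over a countable dense subset without changing its value pointwise, making $F$ measurable and Lipschitz; the assumption that $\sup_{f \in \tilde{\calF}} | \tfrac{1}{n}\sum_i \xi_i f(x_i) | < \infty$ almost surely ensures that $\EE[F(\xi)]$ exists in $\Real$ (it is then automatically finite by Gaussian integrability of Lipschitz functions), so that the centering in Borell--TIS is well-defined. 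Once these measure-theoretic preliminaries are in place, the Lipschitz computation and Borell--TIS complete the proof immediately.
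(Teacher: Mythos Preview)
Your argument is correct and is the standard derivation of this inequality via Borell--TIS / Gaussian Lipschitz concentration. Note, however, that the paper does not supply its own proof of this proposition: it is stated as a citation of Theorem~2.5.8 in \cite{GineNickl2015mathematical} and used as a black box in the subsequent analysis of the empirical risk minimizer. So there is nothing to compare against on the paper's side; your proposal simply fills in the proof that the paper delegates to the reference, and does so in exactly the way that reference proceeds.
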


%!!!!!!!!!!!!!!!!!!!
%The definition of $\delta, r$ should be modified.
%In particular, the relation to $n$ is not accurate.
%$j$ for $\delta_j$  would be $j=1,\dots,\log(R_\infty n/\sigma)$
%!!!!!!!!!!!!!!!!!!!

Remind that every $f \in \calF$ satisfies $\|f\|_n \leq \|f\|_\infty \leq \Rhatinf$.
Hence $\|\calF\|_n \leq \Rhatinf$.
For an observation $(x_i)_{i=1}^n$, 
let $\calG_\delta = \{f - \fstar \mid \|f - \fstar\|_n \leq \delta, f \in \calF\}$. 
It is obvious that $\calG_\delta$ is separable with respect to $L_\infty$-norm.
Then, by the Gaussian concentration inequality, we have that 
$$
P\left( \sup_{f \in \calG_\delta } \left| \sum_{i=1}^n  \frac{1}{n} \xi_i f(x_i) \right| \geq \EE \left[\sup_{f \in \calG_\delta}  
\left|\frac{1}{n} \sum_{i=1}^n \xi_i f(x_i) \right| \right] + r \right) \leq 
\exp[- n r^2/2(\sigma \delta)^2]
$$
for every $r > 0$.
By applying this inequality for $\delta_j = 2^{j-1}\sigma/\sqrt{n}$ for $j=1,\dots, \lceil \log_2(\Rhatinf \sqrt{n}/\sigma) \rceil$
and using the uniform bound,
we can show that, for every $r > 0$, with probability $\lceil \log_2(\Rhatinf \sqrt{n}/\sigma) \rceil \exp[- n r^2/2\sigma^2]$,
it holds that %we obtain that 
any $f \in \calG_\delta$ uniformly satisfies 
$$
\left|  \frac{1}{n} \sum_{i=1}^n  \xi_i (f(x_i) -\fstar(x_i))  \right|
\geq 
\EE \left[\left| \sup_{f \in \calG_{2\delta}}  \frac{1}{n} \sum_{i=1}^n \xi_i f(x_i) \right| \right]
+ 2 \delta  r 
$$
where $\delta$ is any positive real satisfying $\delta \geq \sigma/\sqrt{n}$ and $f \in \calG_\delta$.

\begin{Lemma}
There exists a universal constant $C$ such that 
for any $\delta$ it holds that
%it holds that 
$$
\EE \left[ \left| \sup_{f \in \calG_{2\delta}}  \frac{1}{n} \sum_{i=1}^n \xi_i f(x_i) \right| \right]
\leq C \sigma \delta  \sqrt{\frac{\sum_{\ell=1}^L m_\ell m_{\ell+1}}{n}  
\log_+ \left(1 + \frac{4 \hat{G}\max\{\Rbar,\Rbarb\} }{\delta} \right)}.
%\vee \sum_{\ell=1}^L \frac{m_\ell m_{\ell+1}}{n} \Rhatinf \log\left(1 + \frac{4 \sqrt{2} \hat{G}\Rbar }{ 2\delta} \right).
$$
\end{Lemma}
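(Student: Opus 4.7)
The plan is to apply Dudley's entropy integral to the Gaussian process $Z_f = \frac{1}{n}\sum_{i=1}^n \xi_i f(x_i)$ indexed by $f \in \calG_{2\delta}$. This process has increments
\[
Z_f - Z_g \sim N\bigl(0, \sigma^2 \|f-g\|_n^2/n\bigr),
\]
so its natural sub-Gaussian metric is $d(f,g) = (\sigma/\sqrt{n})\|f-g\|_n$, and the $d$-diameter of $\calG_{2\delta}$ is at most $4\sigma\delta/\sqrt{n}$. Dudley's inequality therefore gives, for a universal constant $C_0$,
\[
\EE\left[\sup_{f \in \calG_{2\delta}} |Z_f|\right]
\leq C_0 \int_0^{4\sigma\delta/\sqrt{n}} \sqrt{\log N(u, \calG_{2\delta}, d)}\, du
= \frac{C_0 \sigma}{\sqrt{n}}\int_0^{4\delta}\sqrt{\log N(\epsilon, \calG_{2\delta}, \|\cdot\|_n)}\, d\epsilon.
\]

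The next step is to bound the covering number. Since $\|\cdot\|_n \leq \|\cdot\|_\infty$ and $\calG_{2\delta} \subset \{f - \fstar : f \in \calF\}$, translating by $\fstar$ does not affect entropy, so
\[
\log N(\epsilon, \calG_{2\delta}, \|\cdot\|_n) \leq \log N(\epsilon, \calF, \|\cdot\|_\infty).
\]
Exactly as in the derivation leading to \Eqref{eq:DrCoveringBound}, the parameter-to-function Lipschitz estimate \eqref{eq:FunctionMetricToParameterMetric} together with the standard covering estimate for Euclidean balls yields
\[
\log N(\epsilon, \calF, \|\cdot\|_\infty)
\leq \sum_{\ell=1}^L m_{\ell+1} m_\ell \log\!\left(1 + \frac{4\hat{G}\Rbar}{\epsilon}\right)
+ \sum_{\ell=1}^L m_{\ell+1} \log\!\left(1 + \frac{4\hat{G}\Rbarb}{\epsilon}\right)
\leq P \log\!\left(1 + \frac{4\hat{G} M}{\epsilon}\right),
\]
where $M = \max\{\Rbar,\Rbarb\}$ and $P$ is a constant multiple of $\sum_{\ell=1}^L m_\ell m_{\ell+1}$ (absorbing the bias parameter count).

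Finally I evaluate the entropy integral. Substituting $\epsilon = 4\delta t$ gives
\[
\int_0^{4\delta}\sqrt{\log(1 + 4\hat{G}M/\epsilon)}\, d\epsilon
= 4\delta \int_0^1 \sqrt{\log\!\left(1 + \frac{\hat{G}M}{\delta t}\right)}\, dt.
\]
Using $\log(1 + a/t) \leq \log(1+a) + \log(1/t)$ for $a > 0$ and $t \in (0,1]$, together with $\sqrt{x+y} \leq \sqrt{x}+\sqrt{y}$ and the elementary fact $\int_0^1 \sqrt{\log(1/t)}\, dt = \sqrt{\pi}/2$, this integral is bounded by a constant multiple of $\delta\sqrt{\log_+(1 + 4\hat{G}M/\delta)}$. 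Combining the three steps produces
\[
\EE\left[\sup_{f \in \calG_{2\delta}} |Z_f|\right]
\leq C \sigma \delta \sqrt{\frac{\sum_{\ell=1}^L m_\ell m_{\ell+1}}{n}\,\log_+\!\left(1 + \frac{4\hat{G}\max\{\Rbar,\Rbarb\}}{\delta}\right)},
\]
which is the claimed inequality. The only delicate point is the handling of the logarithmic integrand near $\epsilon = 0$; this is what forces the use of $\log_+$ rather than $\log$ and is resolved by the splitting argument above. Everything else is a direct application of Dudley's bound together with the already-established parameter-space covering number for $\calF$.
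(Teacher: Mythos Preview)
Your proof is correct and follows essentially the same route as the paper: both apply Dudley's chaining bound to the sub-Gaussian process $f\mapsto \frac{1}{n}\sum_i \xi_i f(x_i)$, bound $\log N(\epsilon,\calG_{2\delta},\|\cdot\|_n)$ by $\log N(\epsilon,\calF,\|\cdot\|_\infty)$ via the parameter-space covering estimate \eqref{eq:DrCoveringBound}, and integrate. Your treatment of the entropy integral (the substitution $\epsilon=4\delta t$ and the splitting $\log(1+a/t)\le\log(1+a)+\log(1/t)$) is in fact more explicit than the paper's, which simply asserts the final bound; the only cosmetic difference is that the paper integrates to the radius $2\delta$ while you integrate to the diameter $4\delta$, which changes nothing up to constants.
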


\begin{proof}
Since $f \mapsto \frac{1}{\sqrt{n}} \sum_{i=1}^n \xi_i f(x_i)$ is a sub-Gaussian process relative to the metric $\|\cdot\|_n$.
By the chaining argument (see, for example, Theorem 2.3.6 of \cite{GineNickl2015mathematical}), it holds that
$$
\EE \left[ \left| \sup_{f \in \calG_{2\delta}}  \frac{1}{n} \sum_{i=1}^n \xi_i f(x_i) \right| \right]
\leq
4\sqrt{2}  \frac{\sigma}{\sqrt{n}} \int_0^{2 \delta} \sqrt{\log(2 N(\epsilon,\calG_{2\delta},\|\cdot\|_n))} \dd \epsilon.
$$
Since $\log N(\epsilon,\calG_{2\delta},\|\cdot\|_n) \leq \log  N(\epsilon,\calF,\|\cdot\|_\infty) \leq 
2 \frac{\sum_{\ell=1}^L m_\ell m_{\ell+1}}{n} 
\log\left(1 + \frac{4 \hat{G}\max\{\Rbar,\Rbarb\} }{\epsilon} \right)
$, the right hand side is bounded by
\begin{align*}
\int_0^{2 \delta} \sqrt{\log(2 N(\epsilon,\calF,\|\cdot\|_n))} \dd \epsilon
& 
\leq 
\int_0^{2 \delta} \sqrt{\log(2) + 2 \frac{\sum_{\ell=1}^L m_\ell m_{\ell+1}}{n} 
\log\left(1 + \frac{4 \hat{G}\max\{\Rbar,\Rbarb\} }{\epsilon} \right) } \dd \epsilon \\
& 
\leq 
C \delta \sqrt{ \frac{\sum_{\ell=1}^L m_\ell m_{\ell+1}}{n} 
\log_+\left(1 + \frac{4 \hat{G}\max\{\Rbar,\Rbarb\} }{\delta} \right) },
\end{align*}
where $C$ is a universal constant.
This gives the assertion.

\end{proof}

Therefore, 
by substituting 
$\delta \leftarrow \left (\|f - \fstar\|_n \vee \sigma \sqrt{\frac{\sum_{\ell=1}^L m_\ell m_{\ell + 1}}{n}}\right)$
and $r \leftarrow \sigma r/\sqrt{n}$, %it is shown that
the following inequality holds:
%every $f \in \calF$ satisfies 
\begin{align*}
&- \frac{1}{n} \sum_{i=1}^n  \xi_i (f(x_i) -\fstar(x_i)) 
 \\
&
\leq 
C \sigma\left (\|f - \fstar\|_n \vee \sqrt{\frac{\sigma^2 \sum_{\ell=1}^L m_\ell m_{\ell+1}}{n}} \right)  
\sqrt{\frac{\sum_{\ell=1}^L m_\ell m_{\ell+1}}{n}  
\log_+ \left(1 + \frac{4 \sqrt{n} \hat{G}\max\{\Rbar,\Rbarb\}}{\sigma\sqrt{\sum_{\ell=1}^L m_\ell m_{\ell+1}}}   \right)} \\
& + 2\left (\|f - \fstar\|_n \vee 
\sqrt{\frac{\sigma^2 \sum_{\ell=1}^L m_\ell m_{\ell+1}}{n}}\right) \sigma \frac{r}{\sqrt{n}} \\
&
\leq 
\frac{1}{4} \left (\|f - \fstar\|_n \vee \sqrt{\frac{\sigma^2 \sum_{\ell=1}^L m_\ell m_{\ell+1}}{n}}\right)^2 \\
& + 2 C^2 \sigma^2 
\left( \frac{\sum_{\ell=1}^L m_\ell m_{\ell+1}}{n}  
\log_+ \left(1 + \frac{4 \sqrt{n} \hat{G}\max\{\Rbar,\Rbarb\}}{\sigma} \right)
+ 4  \frac{r^2}{n} \right),
\end{align*}
uniformly for all $f\in \calF$ with probability $1-\lceil \log_2(\Rhatinf \sqrt{n}/\sigma) \rceil \exp[- r^2/2]$.
Here let 
$$
\Psi_{r,n} :=2  C^2 \sigma^2 
\left( \frac{\sum_{\ell=1}^L m_\ell m_{\ell+1}}{n}  
\log_+ \left(1 + \frac{4 \sqrt{n} \hat{G}\max\{\Rbar,\Rbarb\}}{\sigma \sqrt{\sum_{\ell=1}^L m_\ell m_{\ell + 1} }} \right)
+ 4  \frac{r^2}{n} \right).
$$

Remind that the empirical risk minimizer in the model $\calF$ is denoted by $\fhat$:
$$
\fhat := \argmin_{f \in \calF} \sum_{i=1}^n (y_i - f(x_i))^2.
$$
Since $\fhat$ minimizes the empirical risk, it holds that
\begin{align*}
& 
\frac{1}{n} \sum_{i=1}^n (y_i - \fhat(x_i))^2 \leq
\frac{1}{n} \sum_{i=1}^n (y_i - \fstar(x_i))^2 \\
\Rightarrow~~ & 
\frac{2}{n} \sum_{i=1}^n y_i (\fstar(x_i) - \fhat(x_i)) 
+ \|\fhat\|_n^2 - \|\fstar\|_n^2 \leq 0 \\
\Rightarrow~~
& 
\frac{2}{n} \sum_{i=1}^n (\xi_i + \ftrue(x_i)) (\fstar(x_i) - \fhat(x_i)) 
+ \|\fhat\|_n^2 - \|\fstar\|_n^2 \leq 0 \\
\Rightarrow~~
& 
\frac{2}{n} \sum_{i=1}^n \xi_i  (\fstar(x_i) - \fhat(x_i)) 
+
\frac{2}{n} \sum_{i=1}^n \ftrue(x_i) (\fstar(x_i) - \fhat(x_i)) 
+ \|\fhat\|_n^2 - \|\fstar\|_n^2 \leq 0 \\
\Rightarrow~~
& 
\frac{2}{n} \sum_{i=1}^n \xi_i  (\fstar(x_i) - \fhat(x_i)) 
+ \|\fhat - \ftrue \|_n^2  \leq \|\fstar - \ftrue\|_n^2.
\end{align*}
Therefore, we have 
\begin{align}
& - \frac{1}{4} \left (\|\fhat - \fstar\|_n \vee \sqrt{\frac{\sigma^2 \sum_{\ell=1}^L m_\ell m_{\ell+1}}{n}} \right)^2
- \Psi_{r,n} + \|\fhat - \ftrue\|_n^2 \leq \|\fstar - \ftrue\|_n^2.
%\\
%\Rightarrow~~
%& 
%
\label{eq:fhatfstarFirstbound}
\end{align}

%We have the following consequences.
%
%(1) $\|\fhat - \fstar\|_n^2 \leq 1/n$.
%Then it holds that 
%$$
%\|\fhat - \ftrue\|_n^2 
%=
%\|\fhat - \fstar + \fstar - \ftrue\|_n^2
%\leq 
%2 \|\fhat - \fstar\|_n^2 + 2 \|\ftrue - \fstar\|_n^2
%\leq \frac{2}{n} + 2 \|\ftrue - \fstar\|_n^2.
%$$

Let us assume  $\|\fhat - \fstar\|_n^2 \geq \frac{\sigma^2 \sum_{\ell=1}^L m_\ell m_{\ell+1}}{n}$.
Then, by \Eqref{eq:fhatfstarFirstbound}, we have 
%\begin{align}
%& - \frac{1}{4} \|\fhat - \fstar\|_n^2
%- \Psi_{r,n} + \|\fhat - \ftrue\|_n^2 \leq \|\fstar - \ftrue\|_n^2 \notag \\
%\Rightarrow~~
%&
%- \frac{1}{2} \|\fhat - \ftrue\|_n^2
%- \frac{1}{2} \|\ftrue - \fstar\|_n^2
%  + \|\fhat - \ftrue\|_n^2 \leq \|\fstar - \ftrue\|_n^2 + \Psi_{r,n} \notag  \\
%\Rightarrow~~
%&
%\frac{1}{2} \|\fhat - \ftrue\|_n^2 \leq 
%\frac{3}{2} \|\fstar - \ftrue\|_n^2 + \Psi_{r,n}.
%\end{align}
%
\begin{align}
& - \frac{1}{4} \|\fhat - \fstar\|_n^2
- \Psi_{r,n} + \|\fhat - \ftrue\|_n^2 \leq \|\fstar - \ftrue\|_n^2 \notag \\
\Rightarrow~~
& - \frac{1}{4} \|\fhat - \fstar\|_n^2
- \Psi_{r,n} + \frac{1}{2} \|\fhat - \fstar\|_n^2 - \|\fstar - \ftrue \|_n^2 \leq \|\fstar - \ftrue\|_n^2 \notag \\
 % + \|\fhat - \ftrue\|_n^2 \leq \|\fstar - \ftrue\|_n^2 + \Psi_{r,n} \notag  \\
\Rightarrow~~
&
\frac{1}{4} \|\fhat - \fstar\|_n^2 \leq 
2 \|\fstar - \ftrue\|_n^2 + \Psi_{r,n}.
\end{align}
Otherwise,
we trivially have $\|\fhat - \fstar\|_n^2 < 
\frac{\sigma^2 \sum_{\ell=1}^L m_\ell m_{\ell+1}}{n}
$.

Combining the inequalities, 
it holds that 
\begin{align}
\|\fhat - \fstar\|_n^2 \leq 8  \|\fstar - \ftrue\|_n^2 + 4 \Psi_{r,n} + 
\frac{\sigma^2 \sum_{\ell=1}^L m_\ell m_{\ell+1}}{n}.
\label{eq:fhatEmpL2Bound}
\end{align}
Based on this inequality, we derive a bound for $\|\fhat - \fstar\|_{\LPi}$ instead of the empirical $L_2$-norm $\|\fhat - \fstar\|_n$.

\begin{Proposition}[Talagrand's concentration inequality \citep{Talagrand2,BousquetBenett}]
%Let $(\xi_i)_{i=1}^n$ be i.i.d. Gaussian sequence with mean 0 and variance $\sigma^2$,
Let $(x_i)_{i=1}^n$ be an i.i.d. sequence of input variables in $\calX$.
Then, for a set $\tilde{\calF}$ of functions  from $\calX$ to $\Real$ which is 
separable with respect to $L_\infty$-norm and 
$\|f\|_\infty \leq \tilde{R}$ for all $f \in \tilde{\calF}$, %\sup_{f \in \tilde{\calF}} \sum_{i=1}^n  \frac{1}{n}  f(x_i)^2 < \infty$ almost surely,
it holds that for every $r > 0$, 
\begin{align*}
& P\left( \sup_{f \in \tilde{\calF}} \left | \frac{1}{n} \sum_{i=1}^n f(x_i)^2 - \EE[f^2]\right | \geq 
C \left\{
\EE \left[\sup_{f \in \tilde{\calF}} \left | \frac{1}{n} \sum_{i=1}^n f(x_i)^2 - \EE[f^2]\right | \right] + 
\sqrt{\frac{\|\tilde{\calF}^2\|_{\LPi}^2 r}{n}}
+
\frac{r \tilde{R}^2}{n} 
\right\}
\right) \\
&
\leq  
\exp(-r)
\end{align*}
where $\|\tilde{\calF}^2\|_{\LPi}^2 = \sup_{f \in \tilde{\calF}} \EE[ f(X)^4]$.
%Here the probability is taken with respect to $(\xi_i)_{i=1}^n$.
\end{Proposition}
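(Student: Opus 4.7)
The plan is to reduce the statement to the standard Talagrand--Bousquet concentration inequality for the supremum of an empirical process, and then indicate the route to the Talagrand--Bousquet bound itself. For the reduction, apply the standard inequality to the transformed function class $\calG = \{g = f^2 : f \in \tilde{\calF}\}$: each $g \in \calG$ satisfies $0 \leq g \leq \tilde{R}^2$ pointwise, and $\sup_{g \in \calG} \mathrm{Var}(g(X)) \leq \sup_{g \in \calG} \EE[g(X)^2] = \sup_{f \in \tilde{\calF}} \EE[f(X)^4] = \|\tilde{\calF}^2\|_{\LPi}^2$. The factor $C$ absorbs universal constants arising from combining the Bernstein-type moment generating function bound with the conversion of moment bounds to tails via a Chernoff argument.

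To prove the underlying Talagrand--Bousquet inequality, my approach would be the entropy method of Ledoux and Massart. Let $Z = n \sup_{g \in \calG}\bigl|\tfrac{1}{n}\sum_{i=1}^n g(x_i) - \EE[g]\bigr|$ and let $Z^{(i)}$ denote the same quantity with $x_i$ replaced by an independent copy $x_i'$. The cornerstone is a tensorized modified log-Sobolev inequality
\[
\lambda\, \EE[Z e^{\lambda Z}] - \EE[e^{\lambda Z}]\log \EE[e^{\lambda Z}] \leq \sum_{i=1}^n \EE\bigl[\psi(-\lambda(Z - Z^{(i)}))\, e^{\lambda Z}\bigr],
\]
with $\psi(x) = e^x - x - 1$, which one massages using the self-bounding structure of $Z$ to yield a differential inequality for $F(\lambda) = \log \EE[e^{\lambda(Z - \EE[Z])}]$. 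Herbst's argument converts this into the Bernstein-type bound $F(\lambda) \leq \tfrac{\lambda^2 v}{2(1-c\lambda \tilde{R}^2)}$, where $v$ is controlled by a combination of $\EE[Z]$ and $n\sup_g \mathrm{Var}(g)$. Chernoff optimization then produces the tail bound with its sub-Gaussian regime $\sqrt{vr/n}$ and sub-exponential regime $r\tilde{R}^2/n$, while the $\EE[\sup \cdot]$ term emerges from re-centering $Z$ about its mean.

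The main obstacle is pinning down the correct variance term. A straightforward bounded-differences (McDiarmid) argument only produces a bound of the form $\sqrt{r\tilde{R}^4/n}$, which is far coarser than $\sqrt{\|\tilde{\calF}^2\|_{\LPi}^2\, r/n}$ whenever the functions in $\tilde{\calF}$ can be small in $L_2(P(X))$ relative to their sup-norm. The entropy method circumvents this by exploiting the fact that, after the log-Sobolev step, the right-hand side reduces to something like $n\sup_g \mathrm{Var}(g)$ rather than $n\tilde{R}^4$; this self-bounding refinement is precisely what distinguishes Talagrand's inequality from Hoeffding-type bounds and is what enables the local Rademacher complexity machinery used elsewhere in the paper to deliver faster-than-$O(1/\sqrt{n})$ rates.
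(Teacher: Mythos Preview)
The paper does not provide a proof of this proposition: it is stated as a known result with citations to Talagrand and Bousquet, and then applied directly. So there is no ``paper's own proof'' to compare against. Your proposal goes beyond what the paper does by actually sketching an argument.

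That said, your sketch is essentially the right one. The reduction step---applying the standard Talagrand--Bousquet bound to $\calG = \{f^2 : f \in \tilde{\calF}\}$, noting $\|g\|_\infty \leq \tilde{R}^2$ and $\sup_{g} \mathrm{Var}(g) \leq \|\tilde{\calF}^2\|_{\LPi}^2$---is exactly how one derives the stated form from the general inequality, and the entropy/Herbst route you describe is the modern standard proof of the underlying bound. Your diagnosis of why the variance proxy $\|\tilde{\calF}^2\|_{\LPi}^2$ (rather than the crude $\tilde{R}^4$) matters for the localization argument is also correct and matches how the proposition is subsequently used in the paper. One small point: to get the two-sided supremum as stated you would also apply the inequality to $-\calG$ (or to the symmetrized class) and take a union bound, absorbing the extra factor into $C$; otherwise the argument is sound.
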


%Remind that every $f \in \calF$ satisfies $\|f\|_n \leq \|f\|_\infty \leq \Rhatinf$.
%Hence $\|\calF\|_n \leq \Rhatinf$.
%For an observation $(x_i)_{i=1}^n$, 
Let $\calG'_\delta = \{f - \fstar \mid \|f - \fstar\|_{\LPi} \leq \delta, f\in\calF \}$. 
By the bound $\|f\|_\infty \leq \Rhatinf$ for all $f \in \calF$ (Lemma \ref{supplemma:SupnormBounds}), 
$\|g\|_\infty \leq 2 \Rhatinf$ for all $g \in \calG'_\delta$.
Therefore, we have $\|\calG'^{2}_\delta\|_{\LPi}^2 \leq 4 \Rhatinf^2 \delta^2$.
Hence, Talagrand's concentration inequality yields that
\begin{align}
\sup_{f \in \calG'_\delta}  \left | \frac{1}{n} \sum_{i=1}^n f(x_i)^2 - \EE[f^2]\right| \geq 
C_1 \left\{
\EE \left[\sup_{f \in \calG'_\delta}  \left | \frac{1}{n} \sum_{i=1}^n f(x_i)^2 - \EE[f^2]\right|  \right] + 
\sqrt{\frac{\delta^2 \Rhatinf^2 r}{n}}
+
\frac{r \tilde{R}^2}{n} 
\right\}
\label{eq:TalagrandIneqForSquare}
\end{align}
with probability $1 - \exp(-r)$ where $C_1$ is a universal constant. % which is different from that in Talagrand's concentration inequality.

\begin{Lemma}

There exists a universal constant $C > 0$ such that,
for all $\delta > 0$, 
\begin{align*}
& \EE \left[\sup_{f \in \calG'_\delta}  \left | \frac{1}{n} \sum_{i=1}^n f(x_i)^2 - \EE[f^2]\right |  \right] \\
& \leq C
\Bigg[ \delta \Rhatinf  \sqrt{\frac{\sum_{\ell=1}^L m_\ell m_{\ell+1}}{n}  
\log_+ \left(1 + \frac{4 \hat{G}\max\{\Rbar,\Rbarb\} }{\delta} \right)} \\
&~~
\vee \Rhatinf^2
\frac{\sum_{\ell=1}^L m_\ell m_{\ell+1}}{n}  
\log_+ \left(1 + \frac{4 \hat{G}\max\{\Rbar,\Rbarb\} }{\delta} \right) \Bigg].
\end{align*}
\end{Lemma}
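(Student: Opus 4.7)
The plan is to reduce the uniform deviation to a Rademacher complexity via symmetrization, then strip off the square using the Ledoux--Talagrand contraction principle, and finally bound the resulting Rademacher complexity by Dudley's chaining integral applied to the $L_\infty$-covering number of $\calF$ that was already computed in \eqref{eq:DrCoveringBound}.

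First, by the standard symmetrization inequality, we upper-bound the expected deviation by twice the Rademacher average $\EE[\sup_{f \in \calG'_\delta} |\frac{1}{n}\sum_i \epsilon_i f(x_i)^2|]$, where $(\epsilon_i)$ is an i.i.d.\ Rademacher sequence. Next, because every $f \in \calG'_\delta$ satisfies $\|f\|_\infty \le 2\Rhatinf$ (by Lemma \ref{eq:fbounded}), the map $u \mapsto u^2$ is $4\Rhatinf$-Lipschitz on $[-2\Rhatinf, 2\Rhatinf]$. Thus the Ledoux--Talagrand contraction inequality yields
$$
\EE\left[\sup_{f \in \calG'_\delta}\left|\tfrac{1}{n}\textstyle\sum_{i=1}^n \epsilon_i f(x_i)^2\right|\right] \lesssim \Rhatinf \,\EE\left[\sup_{f \in \calG'_\delta}\left|\tfrac{1}{n}\textstyle\sum_{i=1}^n \epsilon_i f(x_i)\right|\right].
$$

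For the remaining Rademacher term I would apply Dudley's chaining conditionally on $(x_i)_{i=1}^n$. The process $f \mapsto \frac{1}{\sqrt n}\sum_i \epsilon_i f(x_i)$ is sub-Gaussian with respect to $\|\cdot\|_n$, so
$$
\EE_\epsilon\left[\sup_{f \in \calG'_\delta}\left|\tfrac{1}{n}\textstyle\sum_{i=1}^n \epsilon_i f(x_i)\right|\right] \lesssim \frac{1}{\sqrt n}\int_0^{\hat\sigma_n} \sqrt{\log N(\epsilon, \calG'_\delta, \|\cdot\|_n)}\,\dd \epsilon,
$$
where $\hat\sigma_n = \sup_{f \in \calG'_\delta}\|f\|_n$. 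Since $\|\cdot\|_n \le \|\cdot\|_\infty$ and $\calG'_\delta \subset \calF - \fstar$, the covering number in $\|\cdot\|_n$ is dominated by the $\|\cdot\|_\infty$-covering number of $\calF$, which by the computation preceding \eqref{eq:DrCoveringBound} is at most $\sum_\ell m_\ell m_{\ell+1}\log(1 + 4\hat G \max\{\Rbar,\Rbarb\}/\epsilon) + \sum_\ell m_\ell \log(1 + 4\hat G \max\{\Rbar,\Rbarb\}/\epsilon)$. Taking outer expectation over $(x_i)$ and using $\EE\|f\|_n^2 = \|f\|_{\LPi}^2 \le \delta^2$ to truncate the integral at order $\delta$, the standard bound $\int_0^\delta \sqrt{a + b\log(c/\epsilon)}\,\dd \epsilon \lesssim \delta\sqrt{a + b\log_+(c/\delta)}$ gives the first (linear in $\delta$) branch of the assertion.

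The two regimes joined by $\vee$ arise from the cutoff: when $\delta$ is comparable to or larger than the "critical radius" $\Rhatinf\sqrt{\sum_\ell m_\ell m_{\ell+1}/n}$, the chaining integral is dominated by the Dudley contribution producing the term $\delta\Rhatinf \sqrt{\text{params}/n \cdot \log_+(\cdots)}$; when $\delta$ is smaller, the integral is saturated by the entropy tail and one must fall back on the trivial bound $\|f\|_\infty \le 2\Rhatinf$, yielding the second branch $\Rhatinf^2 \sum_\ell m_\ell m_{\ell+1}/n \cdot \log_+(\cdots)$. The main technical obstacle is handling the empirical radius $\hat\sigma_n$ in the Dudley integral; the cleanest route is a peeling argument (or the sub-Gaussian localization lemma from Giné--Nickl) so that the truncation at $\delta$ is justified in expectation, rather than trying to convert $\|f\|_n$ to $\|f\|_{\LPi}$ uniformly (which would circularly require the very concentration inequality \eqref{eq:TalagrandIneqForSquare} we are preparing to invoke).
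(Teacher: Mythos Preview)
Your proposal is correct and follows essentially the same route as the paper: symmetrization, then Ledoux--Talagrand contraction using the $L_\infty$ bound $2\Rhatinf$, then a chaining bound on the linear Rademacher process via the $L_\infty$-covering number computed in \eqref{eq:DrCoveringBound}. The paper dispatches the step you flag as the main technical obstacle (the empirical radius $\hat\sigma_n$ and the resulting two-regime $\vee$ bound) by a direct citation of Theorem~3.1 in \citet{gine2006concentration} or Lemma~2.3 of \citet{IEEEIT:Mendelson:2002}, which is precisely the ``sub-Gaussian localization lemma from Gin\'e--Nickl'' you name as the cleanest route; so there is no real difference in approach.
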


\begin{proof}

Let $(\epsilon_i)_{i=1}^n$ be i.i.d. Rademacher sequence. 
Then, by the standard argument of Rademacher complexity, we have 
\begin{align*}
& \EE \left[\sup_{f \in \calG'_\delta }  \left | \frac{1}{n} \sum_{i=1}^n f(x_i)^2 - \EE[f^2]\right| \right] %\\
\leq
%&
2 \EE \left[\sup_{f \in \calG'_\delta}  \left| \frac{1}{n} \sum_{i=1}^n \epsilon_i f(x_i)^2  \right|\right]
\end{align*}
(see, for example, Lemma 2.3.1 in \citet{Book:VanDerVaart:WeakConvergence}).
Since $\|f\|_\infty \leq 2 \Rhatinf$ for all $f \in \calG'_\delta$, 
the contraction inequality \cite[Theorem 4.12]{Book:Ledoux+Talagrand:1991} gives an upper bound of the RHS as 
\begin{align*}
&2 \EE \left[\sup_{f \in \calG'_\delta}  \left| \frac{1}{n} \sum_{i=1}^n \epsilon_i f(x_i)^2 \right| \right] %\\
\leq
%& 
4 (2\Rhatinf )
\EE \left[\sup_{f \in \calG'_\delta}  \left| \frac{1}{n} \sum_{i=1}^n \epsilon_i f(x_i)  \right|\right].
\end{align*}
We further bound the RHS. By Theorem 3.1 in \cite{gine2006concentration} or Lemma 2.3 of \cite{IEEEIT:Mendelson:2002} with the covering number bound \eqref{eq:DrCoveringBound}, 
there exists a universal constant $C'$ such that
\begin{align*}
& \EE \left[\sup_{f \in \calG'_\delta}  
\left| \frac{1}{n} \sum_{i=1}^n \epsilon_i f(x_i)  \right| \right] \\
& \leq  
C' 
\Bigg[ \delta \sqrt{\frac{\sum_{\ell=1}^L m_\ell m_{\ell+1}}{n}  
\log_+ \left(1 + \frac{4 \hat{G}\max\{\Rbar,\Rbarb\} }{\delta} \right)} \\
&
\vee \Rhatinf
\frac{\sum_{\ell=1}^L m_\ell m_{\ell+1}}{n}  
\log_+ \left(1 + \frac{4 \hat{G}\max\{\Rbar,\Rbarb\} }{\delta} \right)
\Bigg].
\end{align*}
This concludes the proof.
\end{proof}

Let $\Phi_{n} := \frac{\sum_{\ell=1}^L m_\ell m_{\ell+1}}{n}  
\log_+ \left(1 + \frac{4 \sqrt{n} \hat{G}\max\{\Rbar,\Rbarb\}}{\Rhatinf \sqrt{\sum_{\ell=1}^L m_\ell m_{\ell+1}}}  \right)$. Then, applying the inequality \eqref{eq:TalagrandIneqForSquare} for $\delta = 2^{j-1}\Rhatinf /\sqrt{n}$ for $j=1,\dots,\lceil \log_2(\sqrt{n}) \rceil$, 
it is shown that 
there exists an event with probability $1 - \lceil \log_2(\sqrt{n}) \rceil \exp(-r)$
such that, 
uniformly for all $f \in \calF$, it holds that
%such that 
%$\delta^2  = \EE[(f - \fstar)^2] \geq 1/n$, it holds that
\begin{align*}
\left| \frac{1}{n} \sum_{i=1}^n (f(x_i) - \fstar(x_i))^2 - \EE[(f - \fstar)^2] \right|
& \leq C_1 \left[ C (2 \delta \Rhatinf \sqrt{\Phi_{n}}) \vee (\Rhatinf^2 \Phi_{n}) + \delta \sqrt{\frac{\Rhatinf^2 r}{n}} + \frac{r \Rhatinf^2}{n}\right]  \\
& \leq \frac{\delta^2}{2}
+ 2 C_1^2 (2 C^2  +1) \Rhatinf^2 \Phi_{n} +  (C_1^2 + C_1) \frac{\Rhatinf^2 r}{n},
\end{align*}
where $\delta$ is any positive real such that $\delta^2  \geq \EE[(f - \fstar)^2]$ and $\delta^2 \geq \Rhatinf^2 \sum_{\ell=1}^L \mell m_{\ell + 1}/n$.
The right hand side can be further bounded by 
$$
\frac{\delta^2}{2} + C_2  \Rhatinf^2 \left( \Phi_{n}  + \frac{r}{n}\right)
$$
for an appropriately defined universal constant $C_2$.
Applying this inequality for $f = \fhat$ to \Eqref{eq:fhatEmpL2Bound} gives that 
\begin{align*}
\frac{1}{2}\|\fhat - \fstar\|_{\LPi}^2 
\leq C_2  \Rhatinf^2 \left( \Phi_{n}  + \frac{r}{n}\right) + 8 \|\fstar - \ftrue\|_n^2 + 4 \Psi_{r,n} + \left(\frac{\sigma^2 + \Rhatinf^2}{n} \right) \sum_{\ell=1}^L m_\ell m_{\ell + 1}.
\end{align*}
Finally, by the Bernstein's inequality \eqref{eq:BernsteinL2Bound}, the term $\|\fstar - \ftrue\|_n^2$ is bounded as 
$$
\|\fstar - \ftrue\|_n^2 \leq (1 + \tilde{r}')\|\fstar - \ftrue\|_{\LPi}^2  \leq  (1 + \tilde{r}')\deltanone^2
$$
with probability $1- \exp\left(-  \frac{3 n \deltanone^2{\tilde{r}'^2}}{ 32 \Rhatinf^2}   \right)$ for every $\tilde{r}' > 0$.

Combining all inequalities, we obtain that 
\begin{align*}
\|\fhat - \fstar\|_{\LPi}^2 
\leq 2 C_2  \Rhatinf^2 \left( \Phi_{n}  + \frac{r}{n}\right) + 16(1 + \tilde{r}')\deltanone^2 + 4 \Psi_{r,n} + \frac{2(\sigma^2 + \Rhatinf^2)}{n}\sum_{\ell=1}^L m_\ell m_{\ell + 1}.
\end{align*}
This gives a bound for the distance between $\fhat$ and $\fstar$. However, what we want is 
a bound on the distance from the true function $\ftrue$ to $\fhat$.
This can be accomplished 
by noticing that 
$ 
\|\fhat - \ftrue\|_{\LPi}^2 \leq 2(\|\fhat - \fstar\|_{\LPi}^2 + \|\ftrue - \fstar\|_{\LPi}^2)\leq  2 \|\fhat - \fstar\|_{\LPi}^2 + 2 \deltanone^2,
$
and conclude that 
\begin{align*}
\|\fhat - \ftrue\|_{\LPi}^2 \leq
4 C_2  \Rhatinf^2 \left( \Phi_{n}  + \frac{r}{n}\right) + (34 + 32\tilde{r}')\deltanone^2 + 8 \Psi_{r,n} + \frac{4(\sigma^2+\Rhatinf^2)}{n}\sum_{\ell=1}^L m_\ell m_{\ell + 1}.
\end{align*}
More concisely, letting 
$$
\alpha(U) := U^2 \frac{\sum_{\ell=1}^L m_\ell m_{\ell+1}}{ n}
\log_+ \left(1 + {\textstyle \frac{4 \sqrt{n} \hat{G}\max\{\Rbar,\Rbarb\}}{U\sqrt{\sum_{\ell=1}^L m_\ell m_{\ell+1}}}  } \right),
$$
the right side is further upper bounded as 
\begin{align*}
\|\fhat - \ftrue\|_{\LPi}^2 \leq %\\ 
& 
C_3
\Bigg\{
\alpha(\Rhatinf) + \alpha(\sigma)
+
\frac{ (\Rhatinf^2 + \sigma^2)}{n}\left[
\log_+ \left(\frac{\sqrt{n}}{\min\{\sigma/\Rhatinf,1\}}\right)  + r
 \right] %\\ & 
%+ \frac{(1+\sigma^2) \sum_{\ell=1}^L m_\ell m_{\ell + 1}}{n}
%+ \frac{\Rhatinf^2 + \sigma^2}{n} \left( \left\lceil \log_2\left(\frac{\Rhatinf \sqrt{n}}{\min\{\sigma,\Rhatinf\}}\right) \right \rceil + r \right) 
+(1+\tilde{r}')\deltanone^2 
 \Bigg\}
\end{align*}
with probability $1-  \exp\left(-  \frac{3 n \deltanone^2{\tilde{r}'^2}}{ 32 \Rhatinf^2}   \right) - 2\exp(- r)$
for every $r >0$ and $\tilde{r}' > 0$.

%This completes the proof.

%\bibliographystyle{icml2016_mod}
\bibliographystyle{abbrvnat}
\bibliography{main}

\end{document}